\newtheorem{theorem}{Theorem}[section]
\newtheorem{remark}{Remark}[section]
\newtheorem{definition}{Definition}[section]
\newtheorem{lemma}[theorem]{Lemma}
\newtheorem{pro}[theorem]{Proposition}
\renewcommand{\div}{{\rm div \thinspace }}
\newcommand{\bt}{\begin{theorem}}
	\newcommand{\bl}{\begin{lemma}}
		\newcommand{\el}{\end{lemma}}
	\newcommand{\et}{\end{theorem}}
\newcommand{\bn}{\begin{eqnarray}}
	\newcommand{\en}{\end{eqnarray}}
\newcommand{\bnn}{\begin{eqnarray*}}
	\newcommand{\enn}{\end{eqnarray*}}
\newcommand{\ba}{\begin{aligned}}
	\newcommand{\ea}{\end{aligned}}
\newcommand{\be}{\begin{equation}}
	\newcommand{\ee}{\end{equation}}
\newcommand{\Bu}{{\boldsymbol{u}}}
\newcommand{\Be}{{\boldsymbol{e}}}
\newcommand{\Bx}{{\boldsymbol{x}}}
\begin{document}

\title[Self-Similar Solutions with Physical Boundaries]{Self-Similar Solutions to the steady Navier-Stokes Equations in a two-dimensional sector}

	\author{Jeaheang Bang}
	\address{Institute for Theoretical Sciences, Westlake University, China}
	\email{jhbang@westlake.edu.cn}

	\author{Changfeng Gui}
	\address{Department of Mathematics, Faculty of Science and Technology, University of Macau, Taipa, Macao}
	\email{Changfenggui@um.edu.mo}
	
	\author{Hao Liu}
	\address{School of Mathematical Sciences and Institute of Natural Sciences, Shanghai Jiao Tong University, 800 Dongchuan Road, Shanghai, China}
	\email{mathhao.liu@sjtu.edu.cn}
	
	\author{Yun Wang}
	\address{School of Mathematical Sciences, Center for dynamical systems and differential equations, Soochow University, Suzhou, China}
	\email{ywang3@suda.edu.cn}

	\author{Chunjing Xie}
	\address{School of Mathematical Sciences, Institute of Natural Sciences,
		Ministry of Education Key Laboratory of Scientific and Engineering Computing,
		and CMA-Shanghai, Shanghai Jiao Tong University, 800 Dongchuan Road, Shanghai, China}
	\email{cjxie@sjtu.edu.cn}

\begin{abstract}
This paper is concerned with self-similar solutions of the steady Navier-Stokes system in a two-dimensional sector with the no-slip boundary condition. 
We give necessary and sufficient conditions in terms of the angle of the sector and the flux of the flow to guarantee the existence of self-similar solutions of a given type.
We also investigate the uniqueness and non-uniqueness of flows with a given type.
The non-uniqueness result is a new phenomenon for these flows.
 Our results not only give rigorous justifications for some statements in \cite{Rosenhead40} but also show that some numerical computations in \cite{Rosenhead40} may not be precise.  
  As a consequence of the classification of self-similar solutions in the half-space, we characterize the leading order term of the steady Navier-Stokes system in an aperture domain when the flux is small. 
   The main approach is to study the ODE system governing self-similar solutions, where the detailed properties of both complete and incomplete elliptic functions have been investigated.
\end{abstract}

\maketitle

\section{Introduction and Main Results}\label{secintroduction}
The main goal of this paper is to study the so-called self-similar solutions to the steady Navier-Stokes equations
    \begin{equation}
     \label{SNS}
     \left\{
     \begin{aligned}
          &     -\Delta\Bu + \Bu \cdot \nabla \Bu
         + \nabla p =0,
         \\
         &  \div \Bu=0  
        \end{aligned}
    \right.  
    \end{equation}
in a two-dimensional sector $K$ with angle $2\alpha$ ($0<\alpha<\pi$) centered at the origin,   supplemented with the no-slip boundary conditions
  \begin{equation}\label{NoslipBC}
  \Bu=0\quad \text{on}\,\, \partial K\setminus \{0\}.
  \end{equation} 
  
   In $\mathbb{R}^2\setminus \{0\}$, one can introduce the standard polar coordinates, 
\[
x_1=r \cos \theta, \quad x_2=r \sin\theta,
\]
and the orthogonal unit vectors
\[
\Be_r =(\cos \theta, \sin\theta), \quad \Be_\theta=(-\sin \theta, \cos\theta).
\]
Then in terms of polar coordinates, we can write the sector $K$ as
\begin{equation}\label{eq:sec}
     K =\{(r, \theta): r>0, ~ \theta \in (-\alpha, \alpha)\} \textrm{ where } 0<\alpha<\pi,
 \end{equation}
 and the velocity field could be written as $\Bu =u^r \Be_r +u^\theta \Be_\theta$. 

To determine a solution, we still need the so-called flux condition. It follows from the  no-slip boundary conditions \eqref{NoslipBC} and the divergence-free property ${\eqref{SNS}}_2$ of the flows that one can define the flux
\begin{equation}\label{eq:flux}
            \Phi:= \int _{-\alpha}^{\alpha} u^r(1, \theta) \, d\theta =  \int _{-\alpha}^{\alpha} u^r(r,\theta) \, r\, d\theta \quad \text{for all}\,\, r>0.
 \end{equation}


 It is well-known that for Navier-Stokes equations in any $n$-dimensional cone $\Omega$ which is invariant under the dilation $x\to\lambda x$, the following scaling property holds: if $(\Bu(x),p(x))$ is a solution in $\Omega$, then for every $\lambda>0$, the scaled one $(\lambda \Bu(\lambda x), \lambda^2 p(\lambda x))$ is also a solution in $\Omega$. Note that not only $\mathbb{R}^n\setminus\{0\}$, but also $\mathbb{R}^n_+\setminus\{0\}$ and more generally, an $n$-dimensional cone are invariant under the dilation. 
 Motivated by this scaling property, one may study the so-called self-similar (SS) solutions, i.e., the solutions satisfying
	$(\Bu, p)(x)=(\lambda \Bu(\lambda x), \lambda^2 p(\lambda x))$ in $\Omega$ for all $\lambda>0$.
For the study of self-similar solutions for steady Navier-Stokes equations and their applications, one may refer to \cite{Tian98, LiLiYan18, Sverak11, KorolevSverak11} and references therein.

 In $\mathbb{R}^2\setminus \{0\}$, the study of the self-similar solutions can be reduced to the
  study of solutions on the unit circle, i.e., the 
  $2\pi$-periodic solutions (as in \cite{Sverak11, GuillodWittwer15SIAM}). This leads to a classical equation whose solutions are  complete elliptic functions. 
All the self-similar solutions, also called Jeffery-Hamel solutions,  are found explicitly and classified, see \cite{Hamel17, Jeffrey15, Sverak11}. The precise results in these references can be summarized as follows.
\begin{theorem}(\cite{Hamel17, Jeffrey15, Sverak11})  Consider the Navier-Stokes equations \eqref{SNS} in $\mathbb{R}^2\setminus \{0\}$  together with \eqref{eq:flux} where $\alpha=\pi$.   \begin{enumerate}
  \item   For any $\Phi\in \mathbb{R}$, there is a class 
 of self-similar solutions given by
    \begin{align*}
        \Bu_{\Phi,0}
        =\frac{\Phi}{2\pi r} \Be_r + \frac{\mu}{r} \Be_\theta,
        \quad \text{for any}\,\, \mu  \in \mathbb{R},
    \end{align*}
   
\item     For $m=1, 2, \cdots$, if $\Phi\leq \Phi_{max}(m):=\pi(m^2-4)$, there is a self-similar solution of the form
    \begin{align*}
        \Bu_{\Phi,m}
        =\frac{1}{r} f(\theta+\theta_0) \Be_r, 
    \end{align*}
where $f$ is an elliptic function with the minimal period of $\frac{2\pi}{m}$,   and  $\theta_0$ is an angle that can be chosen arbitrarily. Furthermore, the function $f$ is uniquely determined by $\Phi $ and $m$.  If $\Phi>\Phi_{max}(m)$, then there is no self-similar solution with minimal period $\frac{2\pi}{m}$. 

\end{enumerate}
\end{theorem}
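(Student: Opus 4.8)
The plan is to reduce the self-similar problem on $\mathbb{R}^2\setminus\{0\}$ to an ODE for the angular profile, to isolate the effect of the swirl by a single energy identity, and then to analyze the resulting Jeffery--Hamel equation through its first integral and the associated complete elliptic integrals. First I would insert the self-similar ansatz $u^r=f(\theta)/r$, $u^\theta=g(\theta)/r$, $p=P(\theta)/r^2$ into \eqref{SNS}. The divergence-free condition ${\eqref{SNS}}_2$ forces $g'\equiv 0$, so $g\equiv\mu$ is constant, and the scalar vorticity is $\omega=-f'(\theta)/r^2$. To remove the pressure I would pass to the steady vorticity equation $\Bu\cdot\nabla\omega=\Delta\omega$; a direct computation collapses it to the third-order ODE $f'''+2ff'+4f'=\mu f''$, which integrates once to
\[
f''-\mu f'+f^2+4f=C
\]
for some constant $C$, while \eqref{eq:flux} reads $\Phi=\int_{-\pi}^{\pi}f\,d\theta$.

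The swirl is then disentangled by an energy identity. Multiplying the displayed equation by $f'$ and integrating over $[-\pi,\pi]$, every term is a total derivative of a $2\pi$-periodic function except $-\mu(f')^2$, so that $\mu\int_{-\pi}^{\pi}(f')^2\,d\theta=0$. Hence either $f'\equiv 0$, in which case $f\equiv\Phi/(2\pi)$ and $\mu$ is arbitrary---this is exactly the family $\Bu_{\Phi,0}$ of item (1), the pressure being recovered by integrating the momentum equations---or else $\mu=0$ and $f$ solves the Jeffery--Hamel equation $f''+f^2+4f=C$. For the latter I would use the first integral $(f')^2=Q(f)$, where $Q(f)=-\tfrac{2}{3}f^3-4f^2+2Cf+2\mathcal{E}$ is a cubic with negative leading coefficient. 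In the phase plane $(f,f')$ there is a center at $f_+=-2+\sqrt{4+C}$ and a saddle at $f_-=-2-\sqrt{4+C}$; the non-constant $2\pi$-periodic solutions are precisely the closed orbits encircling the center, which oscillate between two adjacent simple roots of $Q$ and are expressed through Jacobi elliptic functions. Their fundamental period $T$ and the flux over one full revolution are complete elliptic integrals,
\[
T=\sqrt{2}\int_{f_a}^{f_b}\frac{df}{\sqrt{\mathcal{E}-V(f)}},\qquad \Phi=2m\int_{f_a}^{f_b}\frac{f\,df}{\sqrt{2(\mathcal{E}-V(f))}},
\]
where $V(f)=\tfrac13 f^3+2f^2-Cf$, $f_a<f_b$ are the turning points, and the factor $m$ records that $[-\pi,\pi]$ contains exactly $m$ periods when the minimal period equals $2\pi/m$.

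Imposing $T=2\pi/m$ selects a one-parameter family of orbits, which I would parametrize by the oscillation amplitude, equivalently by the elliptic modulus $k\in(0,1)$. In the zero-amplitude limit the orbit shrinks to the center and the linearization of $f''+f^2+4f=C$ about $f_+$ has frequency $\sqrt{2f_++4}$; matching this to the required value $m$ forces $f_+=(m^2-4)/2$, whence $\Phi\to 2\pi f_+=\pi(m^2-4)=\Phi_{max}(m)$, attained by the constant profile $f\equiv(m^2-4)/2$. The crux of the argument---and the step I expect to be the main obstacle---is to prove that along this period-$2\pi/m$ locus the flux $\Phi$ is a strictly decreasing function of the amplitude, sweeping out the full interval $(-\infty,\Phi_{max}(m)]$. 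This is delicate because as the amplitude grows one must simultaneously adjust $C$, and hence $f_+$, in order to hold the period fixed, so the sign of $d\Phi$ is not transparent; establishing it requires sharp monotonicity estimates for combinations of complete elliptic integrals such as the ratio $E(k)/K(k)$.

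Granting this monotonicity, the classification follows. Strict monotonicity assigns to each $\Phi\le\Phi_{max}(m)$ a unique admissible amplitude, hence a unique profile $f$, with the free angle $\theta_0$ accounting for the translation invariance of the autonomous equation; this yields both the existence and the uniqueness statements of item (2). For $\Phi>\Phi_{max}(m)$ the value lies above the supremum of the flux along the period-$2\pi/m$ locus, so no closed orbit of minimal period $2\pi/m$ exists, giving the stated non-existence. Throughout, the separation established by the energy identity guarantees that these are all the self-similar solutions, the constant-$f$ case exhausting item (1) and the elliptic case exhausting item (2).
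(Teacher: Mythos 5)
Your reduction is sound: the ansatz, the observation that divergence-free forces $g\equiv\mu$ constant, the energy identity $\mu\int_{-\pi}^{\pi}(f')^2\,d\theta=0$ that splits the problem into the constant-profile-with-swirl family (item (1)) and the swirl-free Jeffery--Hamel case, and the first integral $(f')^2=Q(f)$ with its phase-plane picture are all correct; the zero-amplitude computation pinning $f_+=(m^2-4)/2$ and $\Phi\to\pi(m^2-4)$ is also right. This matches the structure the paper relies on (the paper itself cites this theorem rather than proving it, but its Section \ref{sectionperiodicflow} contains precisely the machinery that does the job).

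The genuine gap is the step you yourself flag and then grant: that along the locus of orbits of fixed minimal period $2\pi/m$ the flux is strictly monotone in the amplitude and sweeps out all of $(-\infty,\pi(m^2-4)]$. This is not a technical remainder; it is the entire analytic content of item (2), since existence for every $\Phi\le\Phi_{max}(m)$, uniqueness of $f$, and non-existence for $\Phi>\Phi_{max}(m)$ all hinge on exactly this claim, and in your two-parameter formulation (amplitude and $C$ adjusted simultaneously to hold the period fixed) the sign of $d\Phi$ is, as you say, not transparent. The way this is closed in the paper's framework is to eliminate the two-parameter dependence before differentiating anything: writing the half-period and half-flux as the integrals \eqref{periodic-01}--\eqref{periodic-02} between the turning points $e_2<e_1$, the substitution $t=\sqrt{(e_1-f)/(e_1-e_2)}$ collapses the pair of conditions into the single scalar identity \eqref{periodic-3}, $\alpha^2+\tfrac{\alpha}{4}\Phi_{\rm per}=H(\bar{\gamma})$ with $H(\bar{\gamma})=[(\bar{\gamma}^2-2)K(\bar{\gamma})+3E(\bar{\gamma})]K(\bar{\gamma})$, where for your setting $\alpha=\pi/m$ and $\Phi_{\rm per}=\Phi/m$, so that the whole problem reads $\pi^2/m^2+\pi\Phi/(4m^2)=H(\bar{\gamma})$. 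Lemma \ref{functionH} (quoted from Guillod--Wittwer) states that $H$ is strictly decreasing and maps $[0,1)$ onto $(-\infty,\pi^2/4]$; this single fact simultaneously gives the threshold $\pi^2/m^2+\pi\Phi/(4m^2)\le\pi^2/4\iff\Phi\le\pi(m^2-4)$, the uniqueness of $\bar{\gamma}$ (hence of the orbit, via \eqref{periodic-4}--\eqref{periodic-6}) for each admissible $\Phi$, and the non-existence above the threshold. So your proof is the right skeleton, but until you either prove your monotonicity claim directly or reduce it, as above, to the monotonicity and range of the explicit function $H$, the argument does not close.
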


For self-similar solutions in a two-dimensional sector, a major new feature is that one has to impose a boundary condition on the physical boundary. In particular, we are interested in the no-slip boundary condition \eqref{NoslipBC}. 
Self-similar solutions in a sector with the no-slip boundary condition, such as jet flows and flows between inclined walls, have various physical applications and have attracted much attention; see \cite{Gusarov20, BanksDrazinZaturska88} and references therein.

However, the study of self-similar solutions in a sector is more involved and poses a significant difficulty due to the presence of physical boundaries.
In \cite{Rosenhead40}, Rosenhead studied the self-similar solutions in a sector satisfying the no-slip boundary condition via expressing the solutions in terms of the elliptic functions. 
As an application, Fraenkel studied laminar flows in a channel with slightly curved walls in \cite{Fraenkel62}, by using the self-similar solutions studied in \cite{Rosenhead40} as leading order terms.
 Unfortunately, for the given angle of the sector and flux,  in \cite{Rosenhead40},  the existence of solutions was established by numerical computations rather than rigorous proof. A rigorous analysis of the properties of the flows mentioned in \cite{Rosenhead40, Fraenkel62} is the first objective of this paper; in particular, we give necessary and sufficient conditions for the existence of a non-trivial self-similar solution of a particular type.

    When the domain is the half-space, in \cite[Lemma 5.1] {GaldiPadulaSolonnikov96}, the authors proved the existence and uniqueness of a symmetric self-similar solution provided the given flux is small. For more studies on the self-similar solutions in a sector, one may refer to \cite{Kobayashi14, Kobayashi19, RivkindSolonnikov00} and references therein. 
It is an interesting problem to get rid of the  restriction of the flux imposed in  \cite{GaldiPadulaSolonnikov96, Kobayashi14} because it is not expected from the numerical results of \cite{Rosenhead40}.

Now to state our results, 
we first define the type of solutions in terms of the numbers of the outflow and inflow regions as follows.
    \begin{definition}\label{def:type}
         Let $(m_+,m_-)\neq (0,0)$ be a pair of non-negative integers,  and
         let $\Bu=u^r \Be_r$\footnote{It can be proved that a self-similar (SS) solution $\Bu$ to  \eqref{SNS} with the no-slip boundary condition \eqref{NoslipBC} must satisfy $u^{\theta}\equiv 0$; see Part (1) of Theorem \ref{2Dresult_2} and its proof in Section \ref{sec:2DC}.} be a non-trivial self-similar solution to \eqref{SNS}-\eqref{NoslipBC} in a two-dimensional sector ${K}$.
         The solution $\Bu$ is said to be \emph{of type $(m_+,m_-)$} if for the quantity $f(\theta):=r \,u^{r}$,
          the number of intervals of $\theta$ where $f(\theta)>0$ (outflow region) is $m_+$ and the number of intervals of $\theta$ where $f(\theta)<0$ (inflow region) is $m_-$.
In particular, a solution of type $(1,0)$ is called \emph{a pure outflow} whereas a solution of type $(0,1)$ is called \emph{a pure inflow.}
    \end{definition}

For example, Figures 1-3 give the illustration for $f(\theta)=ru^r$ with different types. 

\begin{figure}[h]
			\begin{center}
				\includegraphics[width=0.8\textwidth]{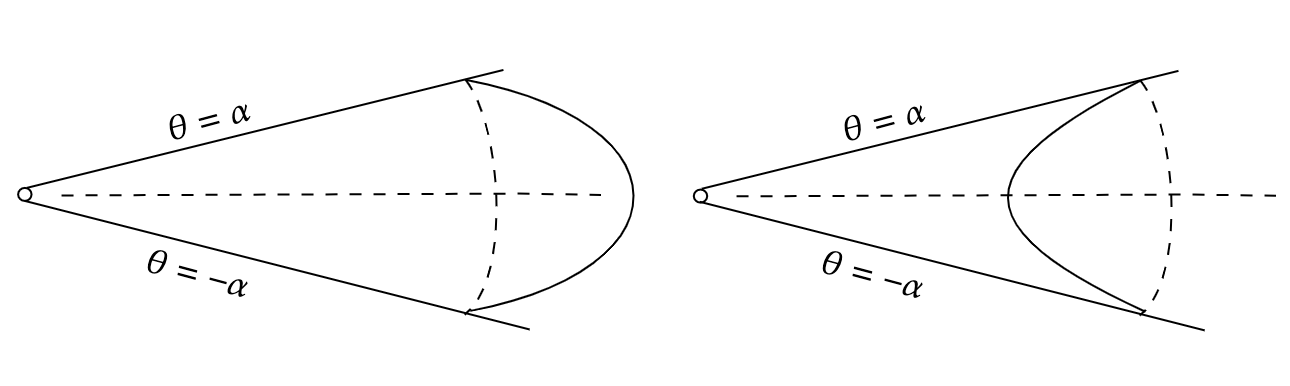}
			\caption{Pure outflow and pure inflow}
		\end{center}
  \end{figure}

 \begin{figure}[h]
			\begin{center}
				\includegraphics[width=0.8\textwidth]{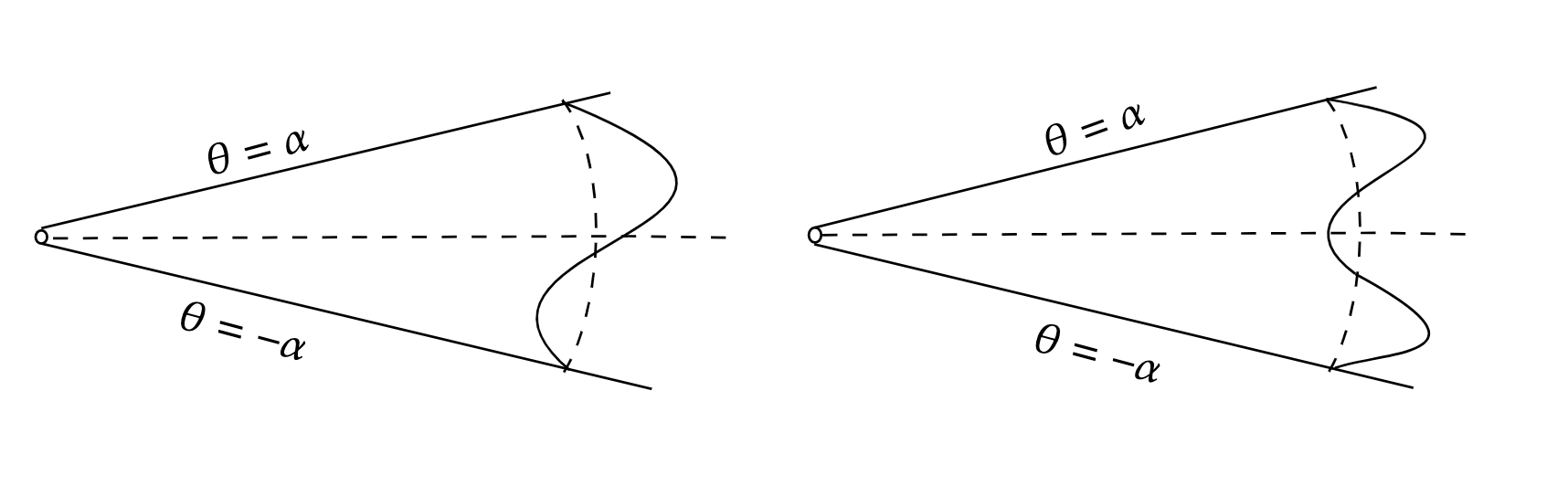}
			\caption{Type (1,1) and type (2,1) flow}
		\end{center}
  \end{figure}

  \begin{figure}[h]
			\begin{center}
				\includegraphics[width=0.8\textwidth]{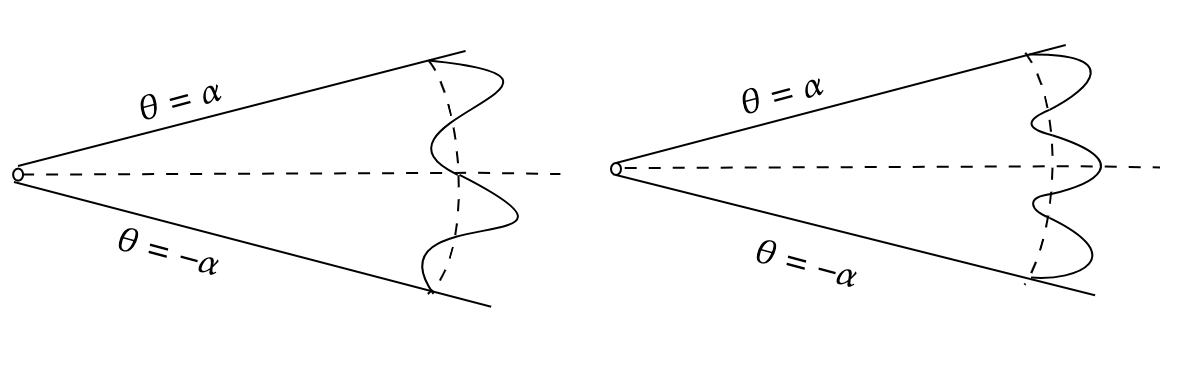}
			\caption{Type (2,2) and type (3,2) flow}
		\end{center}
  \end{figure}

Normally, the integers $m_{+}$ and $m_{-}$ satisfy that 
\begin{equation*}
    |m_{+}-m_{-}|\leq 1.
\end{equation*}
However, there are some special solutions of type $(m, 0)$, $m\geq 2$, which we will discuss later and can be viewed as the limiting cases of type $(m, m)$ flows.
In addition, note that we do not consider the trivial solution $\Bu \equiv 0$ in the definition above. 

Now, let us state our first main result, which is mainly about the existence and non-existence of a self-similar solution of each type. 
    \begin{theorem}\label{2Dresult_2}
  Consider a self-similar (SS) solution $\Bu$ to the Navier-Stokes equations \eqref{SNS} with the no-slip boundary condition \eqref{NoslipBC} and the flux condition \eqref{eq:flux} in a sector $K$, which  is defined in \eqref{eq:sec} with angle $2\alpha$ ($\alpha\in (0, \pi)$). The following statements hold.
    \begin{enumerate}
    \item 
    $u^\theta \equiv 0$ for every SS solution $\Bu$.
    \item (Type $(1,0)$ flow)
    If $\frac{\pi}{2}\leq \alpha < \pi$,  there is no SS solution of type $(1,0)$. If $0<\alpha<\frac{\pi}{2}$, there exists a maximum flux $\Phi_{\textrm{max}}^{(1,0)}(\alpha)>0$ such that 
        \begin{enumerate}[(a)]
            \item 
    if $0<\Phi< \Phi^{(1,0)}_{max}(\alpha)$, then there exists an SS solution of type $(1,0)$; 
        \item 
        if $\Phi> \Phi^{(1,0)}_{max}(\alpha)$, then there exists no SS solution of type $(1,0)$.
         \end{enumerate}
    \item 
    (The other types except for $(1,0)$ and $(2,1)$)
 {Let  $(m_+,m_-)$ be a pair of non-negative integers such that $|m_+-m_-|\leq 1$ and $(m_+,m_-)\not =(0,0),(1,0), (2,1)$. For any $\alpha \in (0, \pi)$,
there exists a maximum flux $\Phi_{max}^{(m_+,m_-)}(\alpha)\in \mathbb{R} $ such that }
    \begin{enumerate}[(a)]
        \item 
    if $\Phi < \Phi_{max}^{(m_+,m_-)} (\alpha) $, then there exists an SS solution of type $(m_+,m_-)$;
        \item 
        if $\Phi> \Phi^{(m_+,m_-)}_{max}(\alpha)$, then there exists no SS solution of type $(m_+,m_-)$.
   \end{enumerate}
   \item  For $(m_+,m_-)=(2,1)$, if $0<\alpha\leq \frac{\pi}{2}$, then there exists a maximum flux $\Phi^{(2,1)}_{max}(\alpha)$ such that the same conclusions as Part (3) hold.
    \end{enumerate}
    \end{theorem}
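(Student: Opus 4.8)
\noindent The plan is to reduce the PDE problem to a scalar autonomous ODE and then to carry out a phase-plane and elliptic-integral analysis. First I would use the self-similar ansatz $\Bu = r^{-1}\big(f(\theta)\Be_r + g(\theta)\Be_\theta\big)$ together with $p=r^{-2}q(\theta)$ (plus a constant). Inserting $u^r=f/r$ into $\div\Bu=0$ written in polar coordinates kills the radial term and leaves $\partial_\theta u^\theta=0$, whence $g$ is constant; the no-slip condition $g(\pm\alpha)=0$ then forces $g\equiv 0$, which proves Part (1). With $u^\theta\equiv 0$, substituting $\Bu=(f(\theta)/r)\Be_r$ into the momentum equations and eliminating the pressure yields the Jeffery--Hamel equation
\begin{equation*}
 f'' + f^2 + 4f = C
\end{equation*}
for a constant $C$, and multiplying by $f'$ gives the first integral
\begin{equation*}
 (f')^2 = P(f):= -\tfrac{2}{3}f^3 - 4f^2 + 2Cf + 2E,
\end{equation*}
a cubic whose three roots sum to $-6$. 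The no-slip condition becomes $f(\pm\alpha)=0$, the flux condition becomes $\int_{-\alpha}^{\alpha}f\,d\theta=\Phi$, and ``type $(m_+,m_-)$'' prescribes the sign pattern of $f$ on $(-\alpha,\alpha)$.

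Because the ODE is autonomous, every solution is an arc of a level curve $(f')^2=P(f)$; the function $f$ keeps a fixed sign between consecutive zeros, and at each zero $|f'|=\sqrt{P(0)}=\sqrt{2E}$. In the generic regime $E>0$ the cubic has three simple real roots $r_1<r_2<0<r_3$, each zero of $f$ is transversal, the outflow and inflow regions alternate (so $|m_+-m_-|\le 1$), all positive humps are congruent copies of the arc $0\to r_3\to 0$, and all negative humps are congruent copies of $0\to r_2\to 0$; the degenerate case $E=0$ produces the special $(m,0)$ families alluded to after Definition \ref{def:type}. Consequently a type-$(m_+,m_-)$ solution exists if and only if the pair of equations
\begin{equation*}
 m_+W_+(r_2,r_3)+m_-W_-(r_2,r_3)=2\alpha,\qquad m_+Q_+(r_2,r_3)+m_-Q_-(r_2,r_3)=\Phi
\end{equation*}
admits a solution with $r_1=-6-r_2-r_3<r_2$, where $W_+=2\int_0^{r_3}\frac{df}{\sqrt{P(f)}}$, $W_-=2\int_{r_2}^{0}\frac{df}{\sqrt{P(f)}}$, and $Q_\pm$ are the same integrals with an extra factor $f$ in the numerator.

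For a fixed type and fixed $\alpha$, I would regard the width equation as defining a one-parameter curve $\Gamma_\alpha$ in the admissible $(r_2,r_3)$-region and study $\Phi$ restricted to $\Gamma_\alpha$. The aim is to show that $\Phi$ is continuous and strictly monotone along $\Gamma_\alpha$, with range $(-\infty,\Phi_{\max})$ — or $(0,\Phi_{\max})$ in the pure-outflow case — so that $\Phi^{(m_+,m_-)}_{max}(\alpha)$ is exactly this supremum and the solvability dichotomy of Parts (2)--(4) follows. The threshold in $\alpha$ comes from the boundary values of the width integrals: using the quadratic approximation of $P$ near $f=0$ one finds that the outflow half-width $\int_0^{r_3}\frac{df}{\sqrt{P(f)}}$ stays strictly below $\frac{\pi}{2}$ and tends to $\frac{\pi}{2}$ as the amplitude degenerates, which forces $\alpha<\frac{\pi}{2}$ for a pure outflow and hence non-existence when $\alpha\ge\frac{\pi}{2}$; the analogous endpoint computation for the $+,-,+$ configuration singles out $\alpha\le\frac{\pi}{2}$ as the range in which the single-threshold statement of Part (3) persists for $(2,1)$.

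\noindent\textbf{Main obstacle.} The entire difficulty is concentrated in the elliptic-integral estimates of the last two steps: proving the required monotonicity of $W_\pm$, $Q_\pm$ and of $\Phi\big|_{\Gamma_\alpha}$ in the root parameters, and evaluating their limits as roots coalesce ($r_2\to r_1$, the homoclinic limit; $r_2,r_3\to 0$, vanishing amplitude) and as $r_3\to\infty$. These monotonicity properties are not soft consequences of continuity and demand a careful analysis of complete and incomplete elliptic functions; in particular, pinning down precisely why $(1,0)$ and $(2,1)$ deviate from the other types with $m_+=m_-+1$ is the most delicate point of the argument.
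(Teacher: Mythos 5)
Your overall strategy (reduction to the autonomous ODE $f''=-f^2-4f+b$, the first integral $(f')^2=Q(f)$ with roots summing to $-6$, and writing the angle/flux conditions as combinations $m_{+}W_{+}+m_{-}W_{-}=2\alpha$, $m_{+}Q_{+}+m_{-}Q_{-}=\Phi$ of incomplete elliptic integrals, then analyzing these along level sets of the width map) is exactly the paper's framework. However, two of your structural claims are false, and each breaks a part of the theorem.

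First, your assertion that in the regime $E>0$ the cubic has three simple real roots $r_1<r_2<0<r_3$ is wrong, and the omitted case is not exotic: a pure outflow can perfectly well correspond to a cubic with one real root $e_1>0$ and a pair of complex conjugate roots (then $Q(f)=-\tfrac23(f-e_1)\bigl[(f+3+\tfrac12 e_1)^2+c^2\bigr]$ with $Q(0)>0$, i.e.\ $E>0$). The paper's Lemma \ref{lem:roots} and the two-case analysis in Lemma \ref{nonexistence} treat this explicitly, and its existence proof for type $(1,0)$ traverses a curve $e_1\mapsto e_2(e_1)$ on which $e_2$ ``could be real or complex.'' This matters quantitatively: for fixed small $\alpha$, the real-root portion of the level set $\{I_{+}=\alpha\}$ is disconnected (the double-root value $I_{+}(e_1,-3-\tfrac12 e_1)$ exceeds $\alpha$ for intermediate $e_1$), and the fluxes realized on the missing piece are achieved \emph{only} by complex-root outflows. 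So with your parametrization, Part (2)(a) fails for an interval of fluxes when $\alpha$ is small, and the non-existence claims in Part (2) and for $\alpha\ge\frac{\pi}{2}$ are not proven either, since you have not ruled out the complex-root solutions you implicitly excluded.

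Second, your plan to show that $\Phi$ restricted to $\Gamma_\alpha$ is ``continuous and strictly monotone with range $(-\infty,\Phi_{\max})$'' cannot work for the types $(m,m+1)$: the paper proves (Theorem \ref{main2_2}, non-uniqueness part, via Lemmas \ref{Lemmanonunique-2}--\ref{Lemmanonunique-4} and \ref{Lemmatype12-2}) that along the level curve the flux first \emph{increases} above $\Phi^{(m,m)}_{\max}(\alpha)$ to $\Phi^{(m,m+1)}_{\max}(\alpha)$ and then decreases to $-\infty$, which is precisely why two solutions exist for fluxes in $(\Phi^{(m,m)}_{\max},\Phi^{(m,m+1)}_{\max})$. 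If your monotonicity held, it would contradict this. The existence dichotomy must instead be obtained from: an a priori upper bound on the flux, attainability of the supremum, the limit $J\to-\infty$ along the curve, and the intermediate value theorem on a \emph{connected} curve. Relatedly, for type $(2,1)$ the delicate point is not an ``endpoint width computation'' but the connectivity and structure of the level set $\{I_{2,1}=\alpha\}$ itself: for fixed $e_1$ there may be two admissible values of $e_2$, and the paper needs the convexity of $I_{2,1}$ in $e_2$ (Lemma \ref{convexity}) and the threshold $\alpha^*\ge\frac{\pi}{2}$ defined through \eqref{eq:alpha^*} to produce a connected curve on which all fluxes up to the maximum are realized. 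These are missing ideas, not merely deferred estimates.
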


\begin{remark}
In Theorem \ref{2Dresult_2}, we did not impose any smallness restriction on flux, and this theorem pertains to non-symmetric flows as well as symmetric ones (symmetric with respect to the middle line $\theta=0$). In fact, the flows of type $(m_+, m_-)$ obtained in Theorem \ref{2Dresult_2}  are non-symmetric if and only if $m_+ =m_-$.
\end{remark}
\begin{remark}
In addition, we have established the existence of type $(m_{+}, m_{-})$ flows for the special case $\Phi=\Phi^{(m_+,m_-)}_{max}(\alpha)$, which  is studied in Section \ref{sum} (cf. Theorem \ref{limitc}).
\end{remark}
 {
\begin{remark}
 Indeed, we can show that there is an
 $\alpha^*\geq\frac{\pi}{2}$ which is implicitly defined in \eqref{eq:alpha^*}, such that
for $0<\alpha\leq \alpha^*$, Part (4) in Theorem \ref{2Dresult_2} still holds; see Proposition \ref{Protype21} for the detailed explanations.
  Numerical computations show that $\alpha^*\geq 2.232$. However, its exact value is unknown.
\end{remark}
}
We also prove various properties of the maximum fluxes and uniqueness or non-uniqueness of some types of flows.  
\begin{theorem}
    \label{main2_2}
    First, the following properties of $\Phi^{(m_+,m_-)}_{max}$ hold.
            \begin{enumerate}
            \item $\Phi_{max}^{(1,0)}(\alpha)=  \frac{3\pi}{\alpha} + o(\frac{1}{\alpha})$ as $\alpha\to 0$ and 
            $\Phi^{(1,0)}_{max} (\alpha) = 8\left(\frac{\pi}{2}-\alpha\right)  + o \left(\frac{\pi}{2}-\alpha \right) $ as $\alpha \to \frac{\pi}{2}$.
            
            \item $\Phi^{(1,1)}_{max} (\alpha)$ is decreasing with respect to $\alpha$;
            $\Phi^{(1,1)}_{max} (\alpha)=\Phi_{max}^{(1,0)}(\alpha)$ for $0<\alpha<\frac{\pi}{2}$ and 
            $\Phi^{(1,1)}_{max} (\alpha)=\Phi_{max}^{(0,1)}(\alpha)$ for $\frac{\pi}{2}\leq\alpha<\pi$.

            \item $\Phi_{max}^{(m,m)}(\alpha)= m \Phi_{max}^{(1,1)} ( \frac{\alpha}{m})
            = m \Phi_{max}^{(1,0)} ( \frac{\alpha}{m}), m\geq 2$.
            \item 
           
            $\Phi^{(m,m)}_{max}(\alpha) 
            < \Phi^{(m,m+1)}_{max} (\alpha)
            \leq \frac{m}{m+1} \Phi^{(m+1,m+1)}_{max} (\alpha) < \Phi_{max}^{(m+1, m+1)}(\alpha) 
            \leq \Phi_{max}^{(m+1, m)}(\alpha)$  for any $m\geq 1$.

            \end{enumerate}     
   Second, the following uniqueness results hold.
            \begin{enumerate}
            \item The SS solution of type $(1, 0)$ (pure outflow)  is unique for every $0< \Phi < \Phi_{max}^{(1, 0)}(\alpha)$. 

            \item The SS solution of type $(0, 1)$ (pure inflow) is unique for every $\Phi < \Phi_{max}^{(0, 1)}(\alpha).$
            
            \item The SS solution of type $(m, m), m\geq 1$ is unique for every $\Phi < \Phi_{max}^{(m, m)} (\alpha)$.
            \end{enumerate}
  Finally, we have the following non-uniqueness of solutions.
            \begin{enumerate}
            
            \item For every $ \alpha \in (0, \frac{\pi}{2}]$ and  every $\Phi\in (\Phi_{max}^{(1, 1)}(\alpha), \Phi_{max}^{(1, 2)}(\alpha))$, there exist at least two SS solutions of type $(1, 2)$.

            \item For every $ \alpha \in (0, \pi)$ and every $\Phi\in (\Phi_{max}^{(m, m)}(\alpha),  \Phi_{max}^{(m, m+1)} (\alpha))$ with $m\geq 2$, there exist at least two SS solutions of type $(m, m+1)$. 
            \end{enumerate}
    \end{theorem}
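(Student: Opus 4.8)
The plan is to reduce the entire problem to the reduced profile $f(\theta):=r\,u^{r}$. By Part (1) of Theorem~\ref{2Dresult_2} the velocity is purely radial, so $f$ determines the solution; eliminating the pressure from \eqref{SNS} turns the system into a third-order autonomous ODE for $f$, and after one integration and multiplication by $f'$ one obtains a first integral of the form $(f')^{2}=P(f)$, where $P$ is a cubic polynomial whose coefficients encode the two constants of integration. The physically relevant profiles are the bounded oscillations of $f$ between two adjacent real roots $f_{2}<f_{3}$ of $P$; these are the (incomplete) elliptic functions. In this description a solution of type $(m_{+},m_{-})$ is a concatenation of positive arches (on which $f>0$) and negative arches (on which $f<0$) of one orbit of $(f')^{2}=P(f)$. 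Two scalar quantities then govern everything: the \emph{total angular width}, which the no-slip condition $f(\pm\alpha)=0$ forces to equal $2\alpha$, and the \emph{flux} $\Phi=\int_{-\alpha}^{\alpha}f\,d\theta$. Each arch contributes to both, and those contributions are elliptic integrals in the roots of $P$: a full period $f_{2}\to f_{3}\to f_{2}$ yields \emph{complete} integrals, whereas an arch terminating at a wall (where $f=0$ is not a turning point) yields \emph{incomplete} ones. The theorem is therefore a catalogue of monotonicity, comparison, and asymptotic statements for these integrals.

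For the identities in the first part I would begin with the scaling relation~(3). Since the ODE is autonomous, a type $(m,m)$ profile is exactly $m$ consecutive full periods of one orbit, so its period equals $2\alpha/m$ and each period is a genuine type $(1,1)$ no-slip profile on a sub-sector of half-angle $\alpha/m$; as flux and angular measure are additive over the $m$ periods this gives $\Phi_{max}^{(m,m)}(\alpha)=m\,\Phi_{max}^{(1,1)}(\alpha/m)$, and the second equality in~(3) follows from~(2). For~(2), the identity $\Phi_{max}^{(1,1)}=\Phi_{max}^{(1,0)}$ on $(0,\tfrac{\pi}{2})$ (and $\Phi_{max}^{(1,1)}=\Phi_{max}^{(0,1)}$ on $[\tfrac{\pi}{2},\pi)$) is obtained by tracking the extremal orbit: as $\Phi$ approaches its supremum the inflow (resp.\ outflow) arch of the $(1,1)$ family shrinks away, so the limiting profile is precisely the extremal pure outflow (resp.\ pure inflow), while its monotonicity in $\alpha$ comes from differentiating the width constraint. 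The asymptotics in~(1) I would obtain by expanding the defining integrals in the two singular limits: as $\alpha\to0$ the orbit concentrates and the leading balance gives $3\pi/\alpha$, whereas as $\alpha\to\tfrac{\pi}{2}$ the profile linearizes and a direct expansion produces the slope $8$. Finally the chain of inequalities in~(4) I would prove by comparing, at fixed energy, the per-arch flux and width as one passes from $(m,m)$ to $(m,m+1)$ to $(m+1,m+1)$, the factor $\tfrac{m}{m+1}$ being read off from~(3).

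The uniqueness statements in the second part I would reduce to a single monotonicity lemma: along the one-parameter family of admissible profiles of the given type (those meeting the width constraint $=2\alpha$), the flux $\Phi$ is a \emph{strictly monotone} function of the remaining parameter, say the oscillation amplitude or the energy level of $P$. Injectivity then permits at most one profile per admissible flux, and existence is supplied by Theorem~\ref{2Dresult_2}. For the balanced types $(m,m)$ the scaling~(3) reduces this to $(1,1)$, whose flux and width are \emph{complete} elliptic integrals, so the classical monotonicity of complete integrals applies; for the pure flows $(1,0)$ and $(0,1)$ the profiles are symmetric but the integrals are \emph{incomplete}, and one must instead show directly that the corresponding derivative keeps a fixed sign.

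The non-uniqueness in the third part is exactly where this monotonicity must fail, and it is the crux of the theorem. A type $(m,m+1)$ profile is $m$ full periods plus one extra inflow arch, so its flux is a sum of complete-integral terms and a single \emph{incomplete} negative-arch term, coupled to the width constraint; I would show that, as the parameter runs over the branch realizing type $(m,m+1)$, the flux increases from $\Phi_{max}^{(m,m)}$ to its maximum $\Phi_{max}^{(m,m+1)}$ and then decreases, so that every value in $\bigl(\Phi_{max}^{(m,m)},\Phi_{max}^{(m,m+1)}\bigr)$ is attained at least twice, producing two distinct solutions. I expect the decisive difficulty to be establishing this fold rigorously: pinning down the sign of $d\Phi/dt$ and showing that it vanishes exactly once requires sharp two-sided estimates for \emph{incomplete} elliptic integrals and their derivatives in all three roots of $P$, which do not follow from the classical theory of complete integrals and form the technical heart of the argument.
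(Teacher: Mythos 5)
Your overall strategy is the same as the paper's: reduce to the first integral $(f')^2=Q(f)$ with $Q$ cubic, parametrize profiles by the roots $(e_1,e_2)$ of $Q$, express the half-angle and half-flux as complete/incomplete elliptic integrals ($I_\pm$, $J_\pm$, $I$, $J$ in the paper's notation), get Part (3) by the periodicity/scaling argument, get uniqueness from strict monotonicity of the flux along the level set of the angle functional (the paper does this via the Guillod--Wittwer function $H(\bar{\gamma})$ of Lemma \ref{functionH} for type $(m,m)$, a direct comparison argument for type $(1,0)$, and a H\"older-inequality sign computation for type $(0,1)$), and get non-uniqueness from a ``fold'' of $J_{m,m+1}$ along the level curve $\{I_{m,m+1}=\alpha\}$, using that $J_{m,m+1}\to-\infty$ at one end of the branch (Lemma \ref{Lemmatype12-2}) together with the intermediate value theorem.

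The genuine gap is exactly the step you flag and defer: the strict inequality $\Phi_{max}^{(m,m)}(\alpha)<\Phi_{max}^{(m,m+1)}(\alpha)$, i.e.\ the fact that the flux \emph{rises} above the $(m,m)$ extremal value as the extra inflow arch is switched on. Your proposal names the toolbox (``sharp two-sided estimates for incomplete elliptic integrals'') but supplies no mechanism, and the statement is not obvious --- adding a negative arch while keeping total width $2\alpha$ could plausibly lower the flux. The paper's mechanism (Lemmas \ref{Lemmanonunique-2}--\ref{Lemmanonunique-4}) is a precise degeneracy: parametrizing the branch by $e_1$ near the $(m,m)$-extremal root, the angle constraint forces $\sqrt{-e_2(e_1)}\sim C_0\,(e_1-e_1^*)$, so $|e_2(e_1)|$ is \emph{quadratically} small; consequently the inflow-arch flux $|J_-|\lesssim|e_2|^{3/2}$ and the perturbation $J_+(e_1,e_2(e_1))-J_+(e_1,0)$ are both $o(e_1-e_1^*)$, while the gain $J_+(e_1,0)-J_+(e_1^*,0)\sim C_5\,(e_1-e_1^*)$ with $C_5>0$ is linear. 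Without identifying this quadratic scaling of $e_2$ the fold cannot be established, and this same computation is also what proves the first inequality in Part (4) of the statement, so it cannot be bypassed. Two smaller points: (i) your plan asks for more than is needed --- the paper never shows $d\Phi/de_1$ vanishes exactly once, only that the flux equals $\Phi_{max}^{(m,m)}$ at the branch endpoint, exceeds it somewhere, and tends to $-\infty$, then applies the intermediate value theorem on both sides of a maximizer; (ii) the endpoint case $\alpha=\tfrac{\pi}{2}$ in non-uniqueness Part (1) degenerates (the branch starts at the trivial solution, $e_1^*(\tfrac{\pi}{2})=0$) and requires the separate small-root analysis of Lemma \ref{Lemmahalfspace-2} rather than the generic fold argument.
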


\begin{remark}
Additional properties of $\Phi^{(m_+,m_-)}_{max}(\alpha)$, such as the estimate for the magnitude of $\Phi^{(0,1)}_{max}(\alpha)$, etc., can also be found in Section \ref{sum} (cf. Theorem \ref{limitc}).    
\end{remark}

\begin{remark}
Most of the facts in Theorem \ref{main2_2} are consistent with the numerical results in \cite{Rosenhead40}. However, Theorem \ref{main2_2} together with Theorem \ref{limitc} reveals that some numerical computations in \cite{Rosenhead40} may not be precise. For example, $\Phi_{max}^{(1, 1)}(\frac{\pi}{2})$ should be $0$, not ``0.5" given in \cite{Rosenhead40}; $\Phi_{max}^{(m, m+1)}(\alpha)$ should be greater than $\Phi_{max}^{(m, m)}(\alpha)$, which contradicts with the numerical results in \cite{Rosenhead40}. 
\end{remark}

\begin{remark}
 There is a more general class of scaling invariant solutions, rotated self similar solutions, 
which satisfy
\[
\Bu(x)= \lambda R(-2\beta \log \lambda) \Bu(\lambda R(2\beta \log \lambda) x)\quad \text{for all }\lambda>0
 \text{  and some  }\beta \in \mathbb{R},
\]
where $R$
is a rotation matrix on $\mathbb{R}^2$.  It is worth mentioning that on $\mathbb{R}^2\setminus \{0\}$, Guillod and Wittwer \cite{GuillodWittwer15SIAM} constructed and characterized all the rotated self-similar solutions which are not necessarily self-similar. However, in a two-dimensional sector $K$, there is no definition of such a rotated self-similar solution.
\end{remark}

Self-similar solutions not only are a very important class of solutions by themselves, but also play a very important role in understanding the asymptotic behavior of general solutions to the stationary Navier-Stokes equations at far fields or near singularities, see \cite{KorolevSverak11, BangGuiLiuWangXie23, JiaSverak17, MiuraTsai12} and references therein. Here we give an application of Theorems \ref{2Dresult_2} and \ref{main2_2}.
It was proved in \cite{GaldiPadulaSolonnikov96}  that when the flux is small, there exists  a unique solution of the Navier-Stokes system in the aperture domain
\begin{equation}\label{defapture}
    \Omega :=\{ \Bx=(x_1,x_2) \in \mathbb{R}^2:\ \mbox{either}\ x_1 \neq 0 \ \ \mbox{or} \ x_2\in (-d, d), d>0 \}\footnote{The definition of the aperture domain $\Omega$ here differs from the definition in \cite{GaldiPadulaSolonnikov96} by exchanging $x_1$ and $x_2$.}
\end{equation}
supplemented no-slip boundary condition
\begin{equation}\label{noslipaperture}
    \Bu=0\quad \text{on}\,\,\partial \Omega.
\end{equation}
The flows decay at large distances like $|\Bx|^{-1}$, whose leading order terms are certain self-similar solutions of the steady Navier-Stokes system; {see Theorem \ref{TheromeGPS} for the precise statement}.  However, the exact types of those leading order self-similar solutions are not identified in \cite{GaldiPadulaSolonnikov96}. For the further progress, one may refer to \cite{BorchersPileckas92, BorchersGaldiPileckas93, NazarovSequeiraVideman01, NazarovSequeiraVideman02, Galdi11}.

With the aid of the classifications of the self-similar solutions obtained in Theorems \ref{2Dresult_2} and \ref{main2_2}, we can give the precise description for the leading order terms of the solutions at far fields in the aperture domain. 
\begin{theorem}\label{main3}
Let the aperture domain $\Omega$ be given by \eqref{defapture}.
There exists  $\Phi_0>0$ such that if $ \Phi\in (-\Phi_0,\Phi_0)$, then the unique solution obtained in \cite[Theorem 5.1]{GaldiPadulaSolonnikov96} of the problem \eqref{SNS} and \eqref{noslipaperture} with given flux $\Phi$ defined by
\[
\Phi:=\int_{-d}^d u_1(0, x_2)\, dx_2,
\]
exhibits the following behavior: its leading order term at the upstream is a type $(0,1)$ self-similar solution, while at the downstream, the leading order term is a type $(1,2)$ self-similar solution.
\end{theorem}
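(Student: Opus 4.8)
The plan is to feed the complete half-space classification from Theorems \ref{2Dresult_2} and \ref{main2_2}, taken at the sector angle $\alpha=\pi/2$, into the far-field asymptotics of Galdi--Padula--Solonnikov recorded in Theorem \ref{TheromeGPS}. By the $x_1\mapsto -x_1$ symmetry of $\Omega$, which sends $\Phi$ to $-\Phi$ and interchanges the two ends, it suffices to treat $\Phi>0$; I will call $\{x_1<0\}$ the upstream and $\{x_1>0\}$ the downstream half-space. Far from the aperture each half-space coincides with the half-plane, a sector with $\alpha=\pi/2$, and Theorem \ref{TheromeGPS} supplies self-similar solutions $\bBV^{-},\bBV^{+}$ of \eqref{SNS}--\eqref{NoslipBC} there with $\Bu-\bBV^{\pm}=o(|\Bx|^{-1})$ in the respective region. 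The entire task is then to identify $\bBV^{\pm}$ within the classified families.

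First I would fix the fluxes. Incompressibility together with the no-slip condition on $\partial\Omega$ makes the flux of $\Bu$ across any half-circle centered at the origin independent of the radius and equal to the aperture flux; orienting each sector so that its outward radial direction points away from the aperture gives $\bBV^{+}$ flux $+\Phi$ and $\bBV^{-}$ flux $-\Phi$. Second, and crucially, because the solution of \cite[Theorem 5.1]{GaldiPadulaSolonnikov96} is produced by a perturbation argument valid for small $|\Phi|$, it is small; hence the leading terms $\bBV^{\pm}$ are small as well and converge to the trivial solution as $\Phi\to 0$. Thus $\bBV^{\pm}$ must lie on the branch of self-similar solutions that bifurcates from $\Bu\equiv 0$, which excludes every family whose amplitude stays bounded away from zero at zero flux, in particular all type $(m,m)$ flows and the large, non-bifurcating branch of type $(1,2)$ flows appearing in the non-uniqueness part of Theorem \ref{main2_2}.

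It remains to identify the bifurcating branch on each side. Writing $f(\theta)=r\,u^r$, the linearization of the Jeffery--Hamel ODE $f''+4f+f^2=C$ at $f\equiv 0$ for $\alpha=\pi/2$ has the one-parameter solution family $f\simeq \tfrac{C}{2}\cos^2\theta$, with $C$ of the same sign as the flux, and this is the profile to which the small solution is close. Upstream the flux $-\Phi$ is negative, so the bulk profile is $\leq 0$; since the strictly negative bulk persists under the small correction, $\bBV^{-}$ has no outflow region and is a pure inflow, hence, by the uniqueness of type $(0,1)$ flows in Theorem \ref{main2_2}, exactly the type $(0,1)$ solution. Downstream the flux $+\Phi$ is positive, so the bulk profile is an outflow hump; but Part (2) of Theorem \ref{2Dresult_2} forbids a pure outflow at $\alpha=\pi/2$, so the solution is forced to develop inflow. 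Because the GPS solution inherits the $x_2\mapsto -x_2$ symmetry of $\Omega$, $f$ is even, and the degeneracy $f'(\pm\tfrac{\pi}{2})=0$ of the linear profile is resolved at nonlinear order into a thin inflow layer adjacent to each wall together with the central outflow hump; this is precisely a type $(1,2)$ flow, coinciding with the small branch furnished by the non-uniqueness statement of Theorem \ref{main2_2}. Hence $\bBV^{+}$ is of type $(1,2)$, completing the identification.

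The main obstacle is the downstream near-wall analysis in the previous paragraph: at linear order the profile $\tfrac{C}{2}\cos^2\theta$ is nonnegative with a doubly degenerate zero at each wall, so deciding that the nonlinear correction opens an inflow layer (type $(1,2)$) rather than leaving the profile nonnegative (the nonexistent type $(1,0)$) requires a careful boundary-layer expansion of $f''+4f+f^2=C$ near $\theta=\pm\tfrac{\pi}{2}$, with the inflow zones degenerating as $\Phi\to 0^{+}$. A secondary technical point is to verify quantitatively that the smallness of the GPS solution is strong enough to place $\bBV^{\pm}$ on the bifurcating branch, and that the $o(|\Bx|^{-1})$ remainder carries no net flux, so that the fluxes of $\bBV^{\pm}$ are exactly $\pm\Phi$.
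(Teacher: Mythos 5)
Your overall strategy coincides with the paper's: Theorem \ref{main3} is obtained by feeding the classification results into the far-field asymptotics of Theorem \ref{TheromeGPS}, so everything reduces to identifying the type of the symmetric Jeffery--Hamel profile at $\alpha=\pi/2$ with flux $\pm\Phi$ (this is exactly Proposition \ref{Prophalfplane} in the paper). Your reduction to the half-plane problem, the symmetry considerations, and the observation that nonexistence of type $(1,0)$ at $\alpha=\pi/2$ forces inflow downstream are all correct ingredients that the paper also uses.

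However, there are genuine gaps. First, upstream you assert that ``the strictly negative bulk persists under the small correction,'' so $\bBV^{-}$ is a pure inflow. This does not follow: the linear profile $\frac{C}{2}\cos^2\theta$ degenerates quadratically at the walls, so an $O(\Phi^2)$ correction can perfectly well create outflow layers there, i.e.\ a type $(2,1)$ flow --- and type $(2,1)$ flows do exist at $\alpha=\pi/2$ (Part (4) of Theorem \ref{2Dresult_2}), so no soft nonexistence theorem rules them out. The paper excludes $(2,1)$ by a quantitative computation: the GPS amplitude bound \eqref{H3}, $\max|f|\le 6|\Phi|<\frac16$, forces $e_1,-e_2<\frac16$, and then the representation $I(e_1,e_2)=\sqrt{6}\,K(\bar{\gamma})/\sqrt{e_1-e_3}$ yields $I_{2,1}(e_1,e_2)>(0.97+0.32)\cdot\frac{\pi}{2}>\frac{\pi}{2}$, contradicting the angle condition. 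Second, downstream your dichotomy ``$(1,0)$ impossible, hence $(1,2)$'' presupposes that these are the only candidates; types $(m,m)$, $(m,m\pm1)$, $(m,0)$ with $m\ge 2$ (profiles with extra sign changes near the walls) and $(2,1)$ must first be excluded. Your bifurcation heuristic --- that all other families have amplitude bounded away from zero at zero flux --- is precisely the statement that needs proof, and the paper proves it from the same amplitude bound by showing any such profile would require $e_1\ge e_1^*(\frac{\pi}{4})>\frac16$. Third, the ``main obstacle'' you defer (a boundary-layer expansion near $\theta=\pm\frac{\pi}{2}$ deciding the near-wall sign) is never needed on the paper's route: once all types except $(0,1)$ and $(1,2)$ are eliminated, the sign of the flux decides between them, since Lemma \ref{Lemmahalfspace-2} shows that the small type $(1,2)$ flows have positive flux ($J_{1,2}(e_1,e_2(e_1))/e_1\to\frac{\pi}{2}$ as $e_1\to 0^+$). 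In short, the step you flag as the hard part can be bypassed entirely, while the steps you treat as routine (excluding $(2,1)$ and the higher oscillation types) are where the actual work lies, and they require the quantitative elliptic-integral estimates your proposal does not supply.
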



\begin{remark}
For $\Phi > 0$, the upstream flows in $\Omega$ correspond to the region where $x_1 \to -\infty$, while the downstream flows correspond to the region where $x_1 \to +\infty$. For $\Phi < 0$, these two regions are interchanged.
  \end{remark}

\begin{remark}
    The reason why the leading order term at the downstream is of type $(1,2)$, not $(1,0)$, should be related to the boundary layer separation. See \cite[p. 4-5]{Abernathy70} and also \cite[Introduction]{Fraenkel62}.
\end{remark}

Our main idea is to study the Navier-Stokes equations on the arc $(\sin \theta, \cos \theta), \theta\in (-\alpha,\alpha)$, under the assumption of self-similarity. The key ingredient is to rewrite the flux condition and angle condition in terms of the complete and incomplete elliptic functions, whose behaviors are analyzed in detail.

The rest of the paper is organized as follows. 
In Section \ref{sec:2DC}, we reduce the problem for the Navier-Stokes system into an ODE problem by using the self-similar assumption. The outflows, periodic flows, and inflows are studied in Sections \ref{Sectionpureoutflow}-\ref{Sectionpureinflow}, respectively.  The flows of other types are investigated in Section \ref{sec:Flows of other types}. Section \ref{sum} is devoted to the special case $\Phi=\Phi^{(m_+,m_-)}_{max}(\alpha)$ and the additional properties of $\Phi^{(m_+,m_-)}_{max}(\alpha)$.
Finally, we study the asymptotic behavior of solutions at infinity of an aperture domain in Section \ref{sectionaperturedomain}. 

\section{Reduction of the problem to an ODE system} \label{sec:2DC}
    
In this section, we reduce the study of self-similar solutions $\Bu$ to the Navier-Stokes equations \eqref{SNS} with no-slip boundary condition \eqref{NoslipBC} and flux condition \eqref{eq:flux} in the sector $K$ to a boundary-value problem for ODE with an integral constraint. 
For self-similar solutions, one can  write
    \begin{align} \label{SSdec}
        \Bu(x)= \Bu(r,\theta)= u^r \Be_r +u^\theta \Be_\theta = 
        \frac{f(\theta)\Be_r + g(\theta) \Be_\theta}{r}, ~ ~ ~\ \ p(x) =p(r,\theta) = \frac{p(\theta)}{r^2}   \end{align}
    with an abuse of the notation of $p$.  

From the equations in \eqref{SNS}, $f$ and $g$ should satisfy the following  equtions:
    \begin{align}
    \label{SSNS}
    \left\{
    \begin{aligned}
        &(p-2f)'=0,
        \\
        &-f''+gf'-f^2 -|g|^2-2p=0,
        \\
        &g'=0,
    \end{aligned}
    \right.
    \end{align}
where $'$ denotes the differentiation with respect to $\theta$.  
These equations hold in the interval $(-\alpha,\alpha)$, and the no-slip boundary condition in \eqref{NoslipBC} becomes
    \begin{align*}
        f(-\alpha)=f(\alpha)=0
        \quad \text{and}\quad 
        g(-\alpha)=g(\alpha)=0.
        \label{BC3}
    \end{align*}
From  $\eqref{SSNS}_3$ and the boundary condition of $g$ above, it follows that $g\equiv 0$ and hence $u^\theta=0$. {\bf This proves Part (1) in Theorem \ref{2Dresult_2}.} In addition, using $\eqref{SSNS}_1$, one can find
    \begin{align*}
        p=2f-\frac{b}{2}
    \end{align*} 
    for some constant $b$,
and $\eqref{SSNS}_2$ becomes
    \begin{align*}
        f''=-f^2 -4f+b.
    \end{align*}
This is a classical equation defining elliptic functions. 
 The flux condition \eqref{eq:flux} now becomes 
 \begin{equation*}
      \int_{-\alpha}^{\alpha} f(\theta) d \theta = \Phi.
 \end{equation*}
In summary, the study of the self-similar solutions to the  problem \eqref{SNS}, \eqref{NoslipBC}, and \eqref{eq:flux} is reduced to the study of the following ODE problem
\begin{equation}\label{eq:ODEsys}
\left\{
\begin{aligned}
&f''=-f^2 -4f+b,\\
&f(-\alpha)=f(\alpha)=0,\\
   &  \int_{-\alpha}^{\alpha} f(\theta) d \theta = \Phi.
\end{aligned}
\right.
 \end{equation}



To study the ODE problem \eqref{eq:ODEsys}, 
we multiply ${\eqref{eq:ODEsys}}_1$ by $2f^\prime$, then integrate both sides with respect to $\theta$ to have
\begin{align}\label{eq:f'square}
    (f^\prime)^2 = -\frac23 f^3 - 4f^2 + 2bf + 2E := Q(f)=  -\frac23 (f- e_1)(f-e_2) (f-e_3),
\end{align}
where $E$ is a constant.
Here $e_1$, $e_2$, $e_3$ are  roots of $Q(f)$, which may be complex. Moreover, it is easy to note that they satisfy the constraint
\begin{align}\label{roots}
    e_1 + e_2 + e_3 = -6.
\end{align}

\section{Pure outflow-Type $(1, 0)$ flow}\label{Sectionpureoutflow}

In this section, we consider the pure outflow or type (1,0) flow, i.e., the flow that satisfies $f(\theta)>0$ when $\theta \in (-\alpha, \alpha)$. 
The main result in this section is as follows.
\begin{pro}\label{Lemmapureoutflow2}
Consider a self-similar (SS) solution of type (1,0) to the Navier-Stokes equations \eqref{SNS} with no-slip boundary condition \eqref{NoslipBC} and the flux condition \eqref{eq:flux}.
\begin{enumerate}
\item 
If the angle $\alpha \geq \frac{\pi}{2}$, there does not exist SS solutions of type (1,0).
\item 
For any $0<\alpha < \frac{\pi}{2}$, there exists a maximum flux $\Phi_{max}^{(1,0)}(\alpha)>0$ defined in \eqref{eq:maxifluxpureout} below, such that the following dichotomy holds: for every $0<\Phi \leq  \Phi_{max}^{(1, 0)} (\alpha)$, there exists a unique SS solution of type (1,0); on the other hand, if $\Phi > \Phi_{max}^{(1, 0)}(\alpha)$, there is no SS solution of type (1,0). 
\item 
Moreover, we have the following asymptotic behavior of
  $\Phi_{max}^{(1,0)}(\alpha)$ as $\alpha \to 0$ and $\frac{\pi}{2}$: $\Phi_{max}^{(1,0)}(\alpha)=  \frac{3\pi}{\alpha} + o(\frac{1}{\alpha})$ as $\alpha\to 0$ and
            $\Phi^{(1,0)}_{max} (\alpha) = 8\left(\frac{\pi}{2}-\alpha\right)  + o \left(\frac{\pi}{2}-\alpha \right) $ as $\alpha \to \frac{\pi}{2}$.
\end{enumerate}
\end{pro}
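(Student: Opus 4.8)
The plan is to convert the boundary-value problem \eqref{eq:ODEsys} into a quadrature and to analyze the resulting (incomplete) elliptic integrals. First I would show that a type $(1,0)$ solution must be even in $\theta$ with a single interior maximum: since the ODE $f''=-f^2-4f+b$ is autonomous, the solution is symmetric about any point where $f'=0$, and a positive single-bump profile vanishing at $\pm\alpha$ forces this critical point to be $\theta=0$. Writing $M:=f(0)>0$ and using the first integral \eqref{eq:f'square}, $M$ is a root of $Q$, and on the half-interval $f$ decreases monotonically from $M$ to $0$, so that
\[
\alpha=\int_0^M\frac{df}{\sqrt{Q(f)}},\qquad \Phi=2\int_0^M\frac{f\,df}{\sqrt{Q(f)}}.
\]
Factoring out the root $M$ gives $Q(f)=(M-f)R(f)$ with $R(f)=\tfrac23 f^2+(\tfrac23M+4)f+R(0)$ and $R(0)=\tfrac23M^2+4M-2b$. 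Since the vertex of $R$ sits at $f=-(M+6)/2<0$, positivity of $Q$ on $[0,M)$ is equivalent to the single condition $R(0)>0$, so the admissible configurations are parametrized by $M>0$ together with $b<\tfrac13M^2+2M$.

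For the non-existence statement (1) I would bound the time map directly. Because $\tfrac23 f^2\ge 0$, with $c:=R(0)/(\tfrac23M+4)>0$ one has
\[
\alpha=\int_0^M\frac{df}{\sqrt{(M-f)R(f)}}\le\frac{1}{\sqrt{\tfrac23M+4}}\int_0^M\frac{df}{\sqrt{(M-f)(f+c)}}.
\]
The remaining integral is taken over a strict subinterval of $[-c,M]$, hence is strictly smaller than $\int_{-c}^M(M-f)^{-1/2}(f+c)^{-1/2}df=\pi$; moreover $\tfrac23M+4>4$ for $M>0$. Therefore $\alpha<\tfrac\pi2$ for every admissible profile, which rules out type $(1,0)$ solutions whenever $\alpha\ge\tfrac\pi2$.

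For existence, uniqueness and the maximal flux (statement (2)) I would fix $\alpha\in(0,\tfrac\pi2)$ and regard $b$ as determined by $M$ through $\alpha=\int_0^M Q^{-1/2}df$. For fixed $M$ the time map is strictly increasing in $b$ (as $Q$ decreases with $b$), running from $0$ (as $b\to-\infty$) up to the value $\alpha_{\max}(M)$ at the degenerate endpoint $R(0)=0$, i.e. $e_2=0$; the analysis above shows $\sup_M\alpha_{\max}(M)=\tfrac\pi2$, attained as $M\to0^+$. Granting that $\alpha_{\max}(M)$ decreases from $\tfrac\pi2$ to $0$, for each $\alpha<\tfrac\pi2$ there is a largest admissible amplitude $M^\ast(\alpha)$, reached exactly when $R(0)=0$, where the quadrature becomes a complete elliptic integral with $Q(f)=\tfrac23 f(M-f)(M+6+f)$. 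I would then show that $\Phi$ increases strictly from $0$ to $\Phi_{\max}^{(1,0)}(\alpha):=2\int_0^{M^\ast}f\,[\tfrac23f(M^\ast-f)(M^\ast+6+f)]^{-1/2}df$ as $M$ runs over $(0,M^\ast(\alpha)]$, which yields at once existence for $0<\Phi\le\Phi_{\max}^{(1,0)}$, non-existence for larger flux, and uniqueness. This monotonicity of the incomplete elliptic integrals in the parameters is the main obstacle: it requires differentiating $\alpha$ and $\Phi$ with respect to $M$ along the level curve and controlling the sign, for which I expect to need the detailed identities among complete and incomplete elliptic functions emphasized in the introduction.

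Finally, the asymptotics (statement (3)) would follow from the complete-integral representation of $\Phi_{\max}^{(1,0)}$ at the degenerate profile. As $\alpha\to\tfrac\pi2$ one has $M^\ast\to0$; expanding $\tfrac23(M+6+f)=4\bigl(1+\tfrac{M+f}{6}\bigr)$ and using $\int_0^M(f(M-f))^{-1/2}df=\pi$ and $\int_0^M f\,(f(M-f))^{-1/2}df=\tfrac{\pi M}{2}$ gives $\tfrac\pi2-\alpha\sim\tfrac{\pi}{16}M^\ast$ and $\Phi_{\max}^{(1,0)}\sim\tfrac{\pi}{2}M^\ast$, whence $\Phi_{\max}^{(1,0)}(\alpha)=8\bigl(\tfrac\pi2-\alpha\bigr)+o\bigl(\tfrac\pi2-\alpha\bigr)$. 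As $\alpha\to0$ one has $M^\ast\to\infty$; rescaling $f=M s$ reduces both integrals to Beta integrals, $\alpha\sim\sqrt{\tfrac32}\,I_0\,M^{-1/2}$ and $\Phi_{\max}^{(1,0)}\sim 2\sqrt{\tfrac32}\,I_1\,M^{1/2}$ with $I_0=\int_0^1(s(1-s^2))^{-1/2}ds$ and $I_1=\int_0^1 s\,(s(1-s^2))^{-1/2}ds$, and the Gamma-function identity $I_0I_1=\pi$ then gives $\Phi_{\max}^{(1,0)}(\alpha)=\tfrac{3\pi}{\alpha}+o\bigl(\tfrac1\alpha\bigr)$.
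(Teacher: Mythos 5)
Your overall architecture coincides with the paper's: parametrize admissible profiles by the amplitude $M=e_1$ together with $b$ (equivalently $R(0)=\tfrac23 e_2e_3$), show the admissible amplitudes form an interval $(0,M^*(\alpha)]$ whose degenerate endpoint $e_2=0$ carries the maximal flux, use the intermediate value theorem for existence, and rescale to Beta/Gamma integrals for the asymptotics. Within this, your Part (1) and Part (3) are correct and complete: the pointwise bound $R(f)\ge(\tfrac23M+4)(f+c)$ giving $\alpha<\pi/\sqrt{\tfrac23M+4}<\tfrac{\pi}{2}$ is in fact a cleaner, unified argument than the paper's two-case analysis (complex versus real $e_2,e_3$ in Lemma \ref{nonexistence}), and your expansions reproduce exactly the constants the paper obtains ($-\pi/16$ and $\pi/4$ near $\alpha=\tfrac{\pi}{2}$, and $I_0I_1=\pi$, hence $3\pi$, near $\alpha=0$).

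The genuine gap is in Part (2): existence, the sharp threshold, non-existence above it, and uniqueness are all made to rest on the claim that $\Phi$ is strictly increasing in $M$ along the level curve $\{\alpha(M,b)=\alpha\}$, and you explicitly leave this unproved (``the main obstacle''), proposing to differentiate along the implicitly defined curve and control signs. That is precisely the hard step, and differentiation of incomplete elliptic integrals along the curve leads to sign conditions with no evident structure; the paper does not proceed this way. Instead it (i) obtains the upper bound $\Phi\le 2J_+(e_1^*(\alpha),0)$ from two elementary monotonicity facts --- $J_+(e_1,e_2)$ is increasing in $e_2$ for fixed $e_1$, by the same pointwise comparison of $(f-e_2)(f-e_3)$ you used for $I_+$, and $J_+(e_1,0)$ is increasing in $e_1$, by the rescaling $f=e_1g$ --- which, together with continuity of the flux along the curve and $\Phi\to0$ as $M\to0^+$, already yield existence and non-existence without any level-curve monotonicity; and (ii) proves uniqueness by a contradiction argument: if $(\tilde e_1,\tilde e_2)$ and $(e_1,e_2)$ with $\tilde e_1<e_1$ both satisfy the angle equation, the substitution $f=e_1g$ (resp.\ $f=\tilde e_1g$) turns equality of the angle integrals into the strict inequality $e_2e_3/e_1<\tilde e_2\tilde e_3/\tilde e_1$ (see \eqref{eq:quo}), and inserting this into the identically rescaled flux integrals makes the two integrands strictly ordered pointwise, so the fluxes cannot coincide. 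This algebraic rescaling trick is what actually delivers the strict monotonicity you need; until you either reproduce it or genuinely carry out the sign analysis of $d\Phi/dM$, Part (2) of your proposal is incomplete. (The subsidiary fact you ``grant'', that $\alpha_{\max}(M)=I_+(M,0)$ is strictly decreasing, is harmless: it follows from the same substitution $f=Mg$, exactly as in the paper.)
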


Clearly, Proposition \ref{Lemmapureoutflow2} gives Part (2) of Theorem \ref{2Dresult_2} as well as Part (1) of Theorem \ref{main2_2}.
 As discussed in Section \ref{sec:2DC}, we only need to solve the ODE problem \eqref{eq:ODEsys}. 
 
 For the ODE problem \eqref{eq:ODEsys},
our starting point is the following simple analysis of the roots of $Q(f)$. There are two possible cases. One is that there is one real root and two complex conjugate roots. In this case, without loss of generality, we let $e_1$ be the real root and $e_2, e_3 $ be the complex conjugate roots. 
The other case is that all three roots are real, and without loss of generality, we can adjust the order of $e_1, e_2$ and $e_3$ so that
$e_3 \leq e_2 \leq e_1$.

\begin{lemma}\label{lem:roots}
For a pure outflow, we have  either $e_1>0$ and $e_2, e_3 $ are complex conjugates or $e_1>0 \geq  e_2 \geq e_3$ and $e_3<0$. 
\end{lemma}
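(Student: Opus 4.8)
The plan is to read off the signs of the roots directly from the energy relation \eqref{eq:f'square}, using only that $f$ is a genuine pure outflow. Since $f$ is continuous on $[-\alpha,\alpha]$, vanishes at the two endpoints, and is strictly positive inside, it attains a maximum $M:=\max_{[-\alpha,\alpha]}f>0$ at some interior point $\theta^\ast$, where necessarily $f'(\theta^\ast)=0$. Evaluating \eqref{eq:f'square} at $\theta^\ast$ gives $Q(M)=(f'(\theta^\ast))^2=0$, so $M$ is a real, positive root of $Q$. Moreover the range of $f$ is exactly the interval $[0,M]$, so the identity $(f')^2=Q(f)$ forces $Q(f)\ge 0$ for every $f\in[0,M]$. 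These two facts — that $M>0$ is a root and that $Q\ge 0$ on all of $[0,M]$ — are the only inputs I need.

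Next I would combine this with the sign pattern of the cubic $Q(f)=-\tfrac23(f-e_1)(f-e_2)(f-e_3)$, whose leading coefficient is negative. In the complex-root case the only real root is $e_1$, so $M=e_1$ and hence $e_1=M>0$; here $(f-e_2)(f-e_3)=|f-e_2|^2\ge 0$ automatically, so no further condition is needed. In the three-real-root case, ordered $e_3\le e_2\le e_1$, the set $\{Q\ge 0\}$ equals $(-\infty,e_3]\cup[e_2,e_1]$. Since $[0,M]$ is a connected subset of this set and $M$ is a root lying in it, $M$ must be the largest root $e_1$: it cannot equal $e_2$, since $Q<0$ just to the left of $e_2$ while $Q\ge 0$ just to the left of $M$; and it cannot equal $e_3$, since $M=e_3$ would force $e_2,e_1\ge M>0$ and then $e_1+e_2+e_3>0$, contradicting \eqref{roots}. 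Thus $e_1=M>0$, and the connected interval $[0,M]$ must lie in the component $[e_2,e_1]$, which yields $0\ge e_2$. Finally $e_3<0$ follows from \eqref{roots}, since $e_3=-6-e_1-e_2\le -6-M<0$.

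The step needing the most care is this last one: controlling the location of $[0,M]$ relative to the negative lobe $(e_3,e_2)$ of $Q$, and excluding degenerate possibilities such as $e_2=e_3=0$ or a repeated root at $M$. Connectedness of $[0,M]$ together with $Q<0$ on $(e_3,e_2)$ prevents $[0,M]$ from straddling the gap, which is exactly what pins it into $[e_2,e_1]$; the double-root situations are handled by the same constraint \eqref{roots}, which always pushes the two smaller roots strictly negative once $e_1=M>0$ is established. I do not anticipate any analytic difficulty beyond this bookkeeping, since everything reduces to the elementary sign analysis of a cubic with a prescribed coefficient sum.
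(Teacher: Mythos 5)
Your proof is correct in substance, and it reaches the conclusion by a somewhat different route than the paper's. The paper's proof consists of two direct contradictions: if $e_1\le 0$, then $Q(f)<0$ at the interior values $f>0\ge e_1$, contradicting $(f')^2=Q(f)\ge 0$; and, in the all-real case, $e_3<0$ follows at once from $e_1+e_2+e_3=-6$, while $e_2>0$ would give $Q(0)<0$, contradicting $Q(f(\pm\alpha))=(f'(\pm\alpha))^2\ge 0$. You instead pivot on the observation that $M=\max f$ is attained at an interior critical point and is therefore itself a positive real root of $Q$, and that $Q\ge 0$ on the entire range $[0,M]$; the sign pattern of the cubic together with connectedness then forces $M=e_1$ and $[0,M]\subseteq[e_2,e_1]$, whence $e_2\le 0$. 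Both arguments rest on the same two inputs (the sign of $Q$ on the attained values of $f$, and the root-sum constraint \eqref{roots}), but yours is more structural: it identifies which root equals $\max f$, a fact the paper only records afterwards without proof, at the cost of having to chase the degenerate double-root configurations ($e_2=e_3$, and $e_1=e_2$), which you do dispose of correctly — the case $M=e_2=e_3$ by \eqref{roots}, and the case $e_1=e_2>e_3$ by your sign argument, since $Q<0$ just below $e_2$ there as well.

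One correction is needed at the very end: your inequality for $e_3$ is backwards. Since you have just shown $e_2\le 0$, it follows that $e_3=-6-e_1-e_2\ge -6-M$, not $\le$; the bound $e_3\le -6-M$ you wrote would require $e_2\ge 0$. The conclusion $e_3<0$ is nevertheless immediate from what you have already established: since $e_3\le e_2\le 0$ and $e_1=M>0$, the constraint \eqref{roots} gives $2e_3\le e_2+e_3=-6-e_1<-6$, hence $e_3<-3<0$ (equivalently, $e_3=0$ would force $e_2=0$ and then $e_1=-6<0$, a contradiction).
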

\begin{proof}
We first show that $e_1>0$.    Suppose $e_1 \leq 0$. Note the cubic polynomial $Q(f)$
satisfies  $Q(f) < 0$ when $f > e_1$, and for the pure outflow $f>0 \geq e_1$. Hence the identity \eqref{eq:f'square}, i.e., $Q(f)=(f')^2\geq 0$ leads to a contradiction. 
 Due to the same reason, one can easily see that if  $e_3 \leq e_2 \leq e_1$ are all real, then $e_3\leq e_2\leq0$.
 Indeed,  the fact that $e_1 + e_2 + e_3 =-6 <0$ implies $e_3 <0.$ Now we prove that $e_2\leq 0$.  Suppose $e_2 >0$, then $Q(0)<0$. Then due to the boundary condition${\eqref{eq:ODEsys}}_2$, $Q(f(\pm \alpha))=Q(0)<0$. This contradicts with $Q(f)=(f')^2\geq 0$. 
\end{proof}

The possible plots for $Q(f)$ are presented in Figures  \ref{one real root} and \ref{three real roots}.
{Then for a pure outflow, the value of $f(\theta)$ increases from $0$ to $e_1$ and then decreases from $e_1$ to $0$. And the solution $f$ is symmetric with respect to the half line $\theta=0$; see Figure \ref{pureout}.}

\begin{figure}[h]
\begin{subfigure}{0.3\textwidth}
     \centering
   \includegraphics[width=0.8\textwidth]{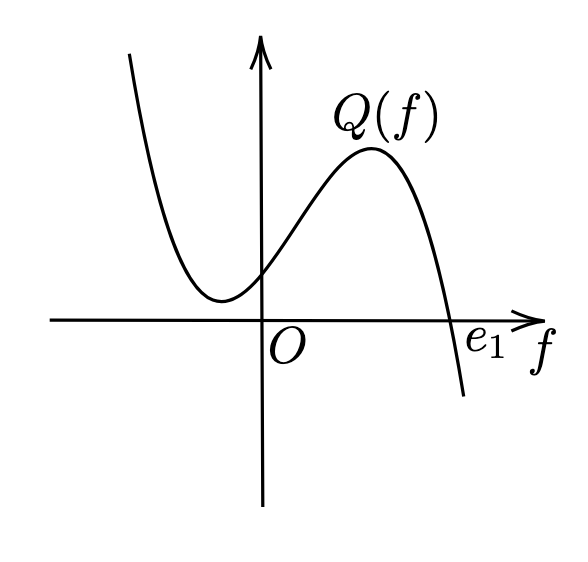}
			\caption{$Q(f)$ has one real root}
   \label{one real root}
\end{subfigure}
\begin{subfigure}{0.3\textwidth}
     \centering
   \centering
    \includegraphics[width=0.8\textwidth]{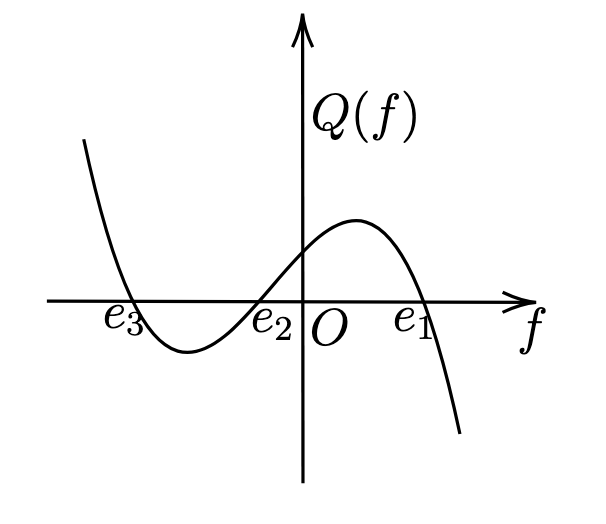}
			\caption{$Q(f)$ has three real roots}
   \label{three real roots}
\end{subfigure}
  \begin{subfigure}{0.3\textwidth}
     \centering
    \centering
   \includegraphics[width=0.8\textwidth]{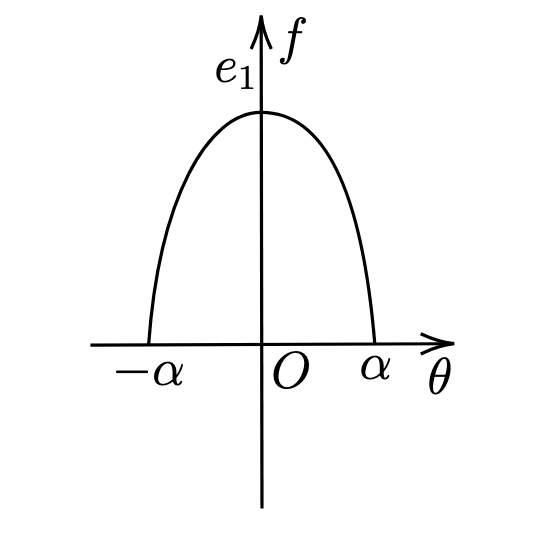}
			\caption{Velocity of pure outflow}
   \label{pureout}
\end{subfigure}
\caption{Plots for $Q(f)$ and a plot for velocity of pure outflow}
\end{figure}


The above fact together with \eqref{eq:f'square} tells  that given the angle $\alpha$ and the flux $\Phi$, we should have the following two equations,
 \begin{align}\label{angle}
 I_{+}(e_1, e_2): = \int_0^{e_1} \frac{df}{\sqrt{Q(f)}} = \int_0^{e_1} \frac{df}{\sqrt{-\frac23 (f - e_1)(f-e_2) (f-e_3)} }  = \alpha,
  \end{align}
  \begin{align}\label{flux}
J_{+}(e_1, e_2) :=   \int_0^{e_1} \frac{f df}{\sqrt{Q(f)}} = \int_0^{e_1} \frac{f\, df }{\sqrt{-\frac23 (f - e_1)(f-e_2) (f-e_3)} }  = \frac{\Phi}{2}, 
  \end{align}
with $e_3 = -6-e_1 -e_2. $

We first prove the following lemma, which is Part (1) of Proposition \ref{Lemmapureoutflow2}.
  \begin{lemma}\label{nonexistence} When the angle $\alpha \geq \frac{\pi}{2}$, there exists no pure outflow.       
  \end{lemma}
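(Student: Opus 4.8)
The plan is to reduce the whole question to a single a priori bound on the angle integral $I_+$ from \eqref{angle}. By Lemma \ref{lem:roots} and the phase-plane picture, a pure outflow forces $f$ to rise monotonically from $0$ to its maximum $e_1>0$ and back, so the angle is determined entirely by $I_+(e_1,e_2)=\alpha$. Thus it suffices to prove the \emph{uniform} estimate $I_+<\frac{\pi}{2}$ for every admissible root configuration; this immediately makes the angle equation $I_+=\alpha$ unsolvable once $\alpha\ge\frac{\pi}{2}$, which is the desired nonexistence.

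To bound $I_+$, I would first write $Q(f)=\frac{2}{3}(e_1-f)(f-e_2)(f-e_3)$, which is positive on $(0,e_1)$, and expand the quadratic factor using the root constraint \eqref{roots}, $e_2+e_3=-6-e_1$:
\[
(f-e_2)(f-e_3)=f^2+(6+e_1)f+e_2e_3 .
\]
The key observation is that $e_2e_3\ge 0$ in both cases of Lemma \ref{lem:roots}: in the real case $e_2,e_3\le 0$, while in the complex case $e_3=\bar{e_2}$, so $e_2e_3=|e_2|^2\ge 0$. Dropping this nonnegative term and then using $f+6+e_1\ge 6$ on $[0,e_1]$ gives, for $f\in(0,e_1)$,
\[
Q(f)\ \ge\ \frac{2}{3}(e_1-f)\,f\,(f+6+e_1)\ \ge\ 4\,f\,(e_1-f).
\]
Hence $\frac{1}{\sqrt{Q(f)}}\le \frac{1}{2\sqrt{f(e_1-f)}}$, and since $\int_0^{e_1}\frac{df}{\sqrt{f(e_1-f)}}=\pi$ for every $e_1>0$ (substitute $f=e_1\sin^2\phi$), I obtain
\[
I_+\ \le\ \frac{1}{2}\int_0^{e_1}\frac{df}{\sqrt{f(e_1-f)}}\ =\ \frac{\pi}{2}.
\]
Because the inequality $f+6+e_1\ge 6$ is strict on $(0,e_1)$, the bound is in fact strict, $I_+<\frac{\pi}{2}$, and together with $I_+=\alpha$ this contradicts $\alpha\ge\frac{\pi}{2}$.

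The step I expect to require the most care is this uniform lower bound on $Q(f)$: one must confirm $e_2e_3\ge 0$ simultaneously for the one-real-root and three-real-roots cases of Lemma \ref{lem:roots}, and notice that replacing $f+6+e_1$ by the constant $6$ is precisely what produces the clean factor $4$ matching the elementary value $\int_0^{e_1}\frac{df}{\sqrt{f(e_1-f)}}=\pi$. Everything else is a routine reduction to the standard arcsine integral, and notably no smallness or symmetry hypothesis on the flux $\Phi$ is needed.
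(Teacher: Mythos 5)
Your proof is correct, and it reaches the key bound by a genuinely different route than the paper. You share the paper's reduction: a pure outflow must satisfy the angle equation $I_{+}(e_1,e_2)=\alpha$ of \eqref{angle}, with the roots constrained as in Lemma \ref{lem:roots}, so nonexistence for $\alpha\geq\frac{\pi}{2}$ follows once one shows the uniform strict bound $I_{+}<\frac{\pi}{2}$ over all admissible $(e_1,e_2)$. The paper obtains this bound through two monotonicity statements: for fixed $e_1$, $I_{+}(e_1,e_2)$ is maximized over all admissible $e_2$ (real or complex) at $e_2=0$, and $e_1\mapsto I_{+}(e_1,0)$ is strictly decreasing, so the supremum $\frac{\pi}{2}$ is approached only in the degenerate limit $(e_1,e_2)\to(0,0)$, which corresponds to the trivial flow. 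You instead bound the cubic pointwise: since $e_2e_3\geq 0$ in both cases of Lemma \ref{lem:roots} (it equals $|e_2|^2$ in the complex-conjugate case and is a product of two nonpositive reals in the real case), one has
\[
Q(f)\;\geq\;\tfrac{2}{3}\,f\,(e_1-f)\,(f+6+e_1)\;\geq\;4\,f\,(e_1-f)\qquad\text{on }(0,e_1),
\]
and comparison with the arcsine integral $\int_0^{e_1}\frac{df}{\sqrt{f(e_1-f)}}=\pi$ gives $I_{+}\leq\frac{\pi}{2}$, strictly, because $f+6+e_1\geq 6+e_1>6$ (recall $e_1>0$ for a nontrivial pure outflow), so the integrand inequality is strict on all of $(0,e_1)$. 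Both arguments are sound; yours is shorter, avoids any differentiation of the elliptic integrals, and handles the complex and real cases in one stroke. What the paper's longer computation buys is that its monotonicity facts are recorded as Lemma \ref{monotone} and reused repeatedly afterwards (the existence of $e_2(e_1)$ solving the angle equation, the maximum-flux formula \eqref{eq:maxifluxpureout}, and the uniqueness argument), whereas your comparison bound, while cleaner for this lemma in isolation, would not supply those later ingredients.
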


\begin{proof}
The analysis is divided into two cases, according to the type of roots. 

{\bf Case 1}. $e_1$ is real, $e_2$ and $e_3$ are complex conjugates. It follows from  \eqref{roots} that one has
\begin{align}\nonumber
e_2= -3 -\frac12 e_1 + ci, \ \ e_3 =-3-\frac12 e_1 - ci,\ \ \ c>0.
\end{align}
In this case, 
\begin{align*}
I_{+}(e_1, e_2) = \int_0^{e_1} \frac{df }{\sqrt{-\frac23 (f-e_1) \left[ (f+ 3 + \frac12 e_1)^2 + c^2 \right]}} .
\end{align*}
In this situation, with $e_1$ fixed, $I_+(e_1, e_2)$ attains its maximum when $c=0$, which is 
\begin{align*}
    I_{+}\left(e_1, -3 - \frac12 e_1\right)= \int_0^{e_1} \frac{df }{\sqrt{-\frac23 (f-e_1)} (f+3 +\frac12 e_1)}. 
\end{align*}
On the other hand, with $e_1$ fixed, $I_+(e_1,e_2)$ decreases as $c$ increases. In addition, as $c$ goes to $\infty$, $I_{+}(e_1, e_2)$ goes to 0. 

\vspace{3mm}
 {\bf Case 2}. The three roots are real, and as assumed above, $e_3 \leq e_2 \leq e_1$. 
 It is shown in Lemma \ref{lem:roots} that $e_1>0 \geq  e_2 \geq e_3$ and $e_3<0$. 
 Moreover, $-6-e_1= e_2+e_3 \leq 2e_2$. It follows that 
$ e_2 \geq -3- \frac12 e_1$.
This shows that for every fixed $e_1>0$, the admissible value of $e_2$ belongs to $ [-3- \frac12 e_1, 0]$. And
\begin{align*}
    (f-e_2) (f-e_3) &= (f-e_2) (f+ 6 + e_1 + e_2) = f(f+6+ e_1)-(6+ e_1) e_2 -e_2^2\\
    &= f(f+6+ e_1)-\left(e_2+3+ \frac12 e_1\right)^2 +  \left(3+ \frac12 e_1\right)^2.
\end{align*}
When $f$ and $e_1$ are fixed,
 the above expression is decreasing with respect to $e_2\in [-3- \frac12 e_1,0]$.
This implies that {\bf{$I_{+}(e_1, e_2)$ is increasing  with respect to $e_2\in [-3- \frac12 e_1,0]$}}.
Then for each $e_1$,
$I_{+}(e_1, e_2)$ attains its minimum when  $e_2 = -3- \frac12 e_1$ (this also implies that $e_3 = -3- \frac12 e_1$), which is 
\begin{align*}
    I_{+}\left(e_1, -3 - \frac12 e_1\right)= \int_0^{e_1} \frac{df }{\sqrt{-\frac23 (f-e_1)} (f+3 +\frac12 e_1)}. 
\end{align*}
It is exactly the maximum of Case 1. 
On the other hand, for a fixed $e_1$, the integral $I_{+}(e_1, e_2)$ attains its maximum when $e_2 =0$, which is 
\begin{align}\label{eq: I_{+}(e_1, 0)}
  I_{+}(e_1, 0)=  \int_0^{e_1} \frac{df}{\sqrt{-\frac23 (f- e_1) f (f+6 + e_1)}} . 
\end{align}
Note that \textbf{$I_{+}(e_1, 0)$ is  strictly decreasing with respect to $e_1$}, which can be seen from the following expression
\begin{align*}
   I_{+}(e_1, 0) =  \int_0^{e_1} \frac{df}{\sqrt{-\frac23 (f-e_1)f (f+ 6 + e_1)}}
    = \int_0^1 \frac{dg}{\sqrt{-\frac23 (g-1) g (e_1 g + 6 + e_1)}}. 
\end{align*}
Hence $I_{+}(e_1, 0)$ attains its maximum when $e_1 = 0$. A direct computation shows that this maximum is exactly 
\begin{equation}\label{eq:maxi0.5pi}
    \int_0^1 \frac{dg}{\sqrt{4(1-g) g}} \xlongequal {g=\sin^2\theta} \int_{0}^{\frac{\pi}{2}} \frac{2\sin \theta \cos \theta}{2\sin \theta \cos \theta} d\theta = \frac{\pi}{2}.
\end{equation}
The above computations show that the maximum value of the half angle (of the sector) $\alpha$ that a pure outflow can achieve is $\frac{\pi}{2}$, and the maximum value is in fact achieved when $(e_1,e_2)=(0,0)$, which corresponds to the trivial solution $f\equiv 0$ under the condition $f\geq 0$. Therefore, { \bf there exists no pure outflow when $\alpha\geq \frac{\pi}{2}$. }  
\end{proof}

During the proof of Lemma \ref{nonexistence}, we also derive some monotone properties of $I_{+}(e_1, e_2)$, which we formulate as a lemma for later use. 

\begin{lemma}\label{monotone}
    For every fixed $e_1>0$, $e_2$ and $e_3$ real, $I_{+}(e_1, e_2)$ is strictly increasing with respect to $e_2\in [-3-\frac12 e_1, 0]$. Moreover, $I_{+}(e_1, 0)$ is strictly decreasing with respect to $e_1\in (0, \infty)$. 
\end{lemma}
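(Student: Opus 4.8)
The plan is to establish both monotonicity statements directly from pointwise monotonicity of the relevant integrands, reusing the algebraic identities already obtained in the proof of Lemma \ref{nonexistence} (indeed, the lemma is essentially a clean formulation of facts derived there). For the first claim I would fix $e_1>0$ and, under the constraint $e_3=-6-e_1-e_2$, recall the completing-the-square identity
\[
(f-e_2)(f-e_3) = f(f+6+e_1) - \left(e_2+3+\tfrac12 e_1\right)^2 + \left(3+\tfrac12 e_1\right)^2.
\]
As $e_2$ increases over $[-3-\tfrac12 e_1,0]$, the base $e_2+3+\tfrac12 e_1$ ranges over $[0,\,3+\tfrac12 e_1]\subset[0,\infty)$, so its square strictly increases, and hence for each fixed $f\in(0,e_1)$ the product $(f-e_2)(f-e_3)$ strictly decreases. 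Since $Q(f)=\tfrac23(e_1-f)(f-e_2)(f-e_3)$ with $e_1-f>0$ on $(0,e_1)$, the integrand $1/\sqrt{Q(f)}$ strictly increases in $e_2$ at every such $f$, whence $I_+(e_1,e_2)$ is strictly increasing in $e_2$.

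For the second claim I would invoke the rescaling $f=e_1 g$ already carried out in Lemma \ref{nonexistence}, which gives
\[
I_+(e_1,0) = \int_0^1 \frac{dg}{\sqrt{\tfrac23 (1-g)\, g\, (e_1 g + 6 + e_1)}}.
\]
Here the only $e_1$-dependence sits in the factor $e_1 g+6+e_1 = e_1(g+1)+6$, whose coefficient $g+1$ is positive for every $g\in(0,1)$; thus this factor strictly increases with $e_1$, the integrand strictly decreases pointwise in $e_1$, and therefore $I_+(e_1,0)$ is strictly decreasing in $e_1\in(0,\infty)$.

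The only genuine point requiring care — and the step I expect to be the main obstacle — is that all these integrals are improper: $1/\sqrt{Q(f)}$ carries an inverse-square-root singularity at $f=e_1$ (and additionally at $f=0$ when $e_2=0$), and likewise the rescaled integrand is singular at $g=0,1$. I would therefore first confirm that each integral in question converges, which is immediate since every singularity is of order $(\cdot)^{-1/2}$ and hence integrable. Once finiteness is in hand, the pointwise comparison of integrands transfers legitimately to a comparison of the integrals, and strictness is automatic: the integrands are strictly ordered on the entire open interval of integration, a set of positive measure, so the resulting inequality between the (finite) integrals is strict. No differentiation under the integral sign is needed, so the argument avoids the delicate issue of differentiating past the endpoint singularities.
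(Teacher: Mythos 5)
Your proposal is correct and takes essentially the same route as the paper: the paper's own proof (derived inside the proof of Lemma \ref{nonexistence}) uses exactly your completing-the-square identity for $(f-e_2)(f-e_3)$ to get pointwise monotonicity of the integrand in $e_2$, and the same rescaling $f=e_1 g$ to read off that the factor $e_1(g+1)+6$ makes $I_{+}(e_1,0)$ decreasing in $e_1$. Your extra remarks on convergence of the improper integrals and on strictness of the resulting inequalities are a careful refinement of the same argument, not a different approach.
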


\vspace{3mm}
It is then natural to ask when $0<\alpha <\frac{\pi}{2}$, for any given flux $\Phi>0$, whether there is a pure outflow. The answer is no, there must be a restriction on $\Phi$ from above, this is exactly Part (2) in Proposition \ref{Lemmapureoutflow2}.
We now proceed to prove the rest of Proposition \ref{Lemmapureoutflow2} including its Part (3) where the asymptotic behavior of  $\Phi_{max}^{(1,0)}(\alpha)$ is considered.

\begin{proof} [Proof of Proposition \ref{Lemmapureoutflow2}]
We only need to prove  Part (2) and  Part (3) in Proposition \ref{Lemmapureoutflow2}.

(i) Proof of Part (2) in Proposition \ref{Lemmapureoutflow2}.
To find the corresponding pure outflow with the given half angle $\alpha$ and flux $\Phi$, we need to find $e_1$ and $e_2$ such that \eqref{angle}-\eqref{flux} can be satisfied. We consider  \eqref{angle} first.
We note that $I_{+}(e_1, 0)$ in \eqref{eq: I_{+}(e_1, 0)} decreases strictly from $\frac{\pi}{2}$ to $0$, as $e_1$ increases from $0$ to $\infty$. Hence, for the angle $\alpha< \frac{\pi}{2}$, there is a unique $e_1= e_1^*(\alpha) >0 $, such that 
\begin{align}\label{angle-10}
    I_{+}(e_1^*(\alpha), 0) = \alpha. 
\end{align}
For $e_1 \leq e_1^*(\alpha)$, it holds that $I_{+}(e_1, 0) \geq \alpha $. On the other hand, 
\begin{align}\nonumber 
    \lim_{c\to \infty} I_{+} \left(e_1, -3- \frac12 e_1 + ci\right) =0. 
\end{align}
Hence, using the monotone properties of $I_{+}(e_1, e_2)$, for every $0<e_1 \leq e_1^*(\alpha)$, we can find a unique $e_2=e_2(e_1)$, such that 
\begin{align*}
    I_{+}(e_1, e_2(e_1)) = \alpha. 
\end{align*}
Here $e_2(e_1)$ could be real or complex. 
On the other hand, for every $e_1 > e_1^*(\alpha)$, $I_{+}(e_1, 0) < I_{+}(e_1^*(\alpha), 0)=\alpha$. According to Lemma \ref{monotone}, we know that 
 $I_{+}(e_1, e_2) \leq I_{+}(e_1, 0)$. Hence, 
there is no $e_2$ such that $I_{+}(e_1, e_2)=\alpha$.
This shows that the admissible value of $e_1$ is $(0,e_1^*(\alpha)]$.

Due to the same reason as that in the proof of Lemma \ref{nonexistence},  for fixed $e_1$, $J_{+}(e_1, e_2)$ is increasing with respect to $e_2\in [-3- \frac12 e_1,0]$. Hence,
the maximum of $J_{+}(e_1, e_2)$ is also attained when $e_2 = 0$ (among all possible $e_2$'s, real or complex), which is 
\begin{align*}
    J_{+}(e_1, 0) = \int_0^{e_1} \frac{f\, df }{\sqrt{-\frac23 (f - e_1) f (f+ 6 + e_1)}}  = \int_0^1 \frac{\sqrt{e_1} g \, dg }{\sqrt{\frac23(1-g) g (g+ 6/e_1 + 1)}} .
\end{align*}
Then it is easy to see $J_{+}(e_1, 0)$ is increasing with respect to $e_1$. Hence, under the angle constraint \eqref{angle}, the maximum  flux is 
\begin{align}\label{eq:maxifluxpureout} 
    \Phi_{max}^{(1, 0)}(\alpha) : =2 J_{+}(e_1^*(\alpha), 0).
\end{align}

Next, for every $0<\Phi < \Phi_{max}^{(1, 0)}(\alpha)$, we show that there exists an $e_1\in (0, e_1^*(\alpha))$, such that 
\begin{align}\nonumber
    J_{+}(e_1, e_2(e_1)) = \int_0^{e_1} \frac{f \,df }{\sqrt{-\frac23 (f-e_1) (f - e_2) (f+6+e_1 + e_2)}}  = \frac{\Phi}{2}. 
\end{align}
As proved above, for every $0< e_1 \leq e_1^*(\alpha)$, there exists a unique  $e_2(e_1)$, such that $I_{+}(e_1, e_2(e_1)) = \alpha$. Note that  $J_{+}(e_1, 0)$ goes to $0$ as $e_1$ goes to $0$, and
\begin{align}\nonumber
   0\leq  J_{+}(e_1, e_2(e_1)) \leq J_{+}(e_1, 0).
\end{align}
Hence
\begin{align}\nonumber
    \lim_{e_1 \rightarrow 0^+} J_{+}(e_1, e_2(e_1)) = 0. 
\end{align}
Since $J_{+}(e_1, e_2(e_1))$ is continuous as a function of $e_1$, for every $0<\Phi < \Phi_{max}^{(1, 0)}(\alpha)$, by the intermediate value theorem, there must be at least an $e_1$, such that 
$ J_{+}(e_1, e_2(e_1)) = \frac{\Phi}{2}. $
This proves the existence and non-existence results in Part (2) of Proposition \ref{Lemmapureoutflow2}.

 We then show the uniqueness of the pure outflow. 
 We prove the uniqueness by contradiction. Suppose there exists another $(\tilde{e}_1, \tilde{e}_2)$ satisfying \eqref{angle} and \eqref{flux}, with 
$0< \tilde{e}_1 < e_1$. We have from \eqref{angle} that
\begin{align}\nonumber
\alpha & = \int_0^{e_1} \frac{df}{\sqrt{\frac23 (e_1 - f) (f- e_2) (f - e_3)}} \\ \nonumber
&= \int_0^1 \frac{dg}{\sqrt{\frac23 (1-g) (e_1 g - e_2) ( e_1 g - e_3) / e_1 }}\\ \nonumber
&= \int_0^1 \frac{dg}{\sqrt{\frac23 (1- g) \left[  e_1 g^2 - (e_2 + e_3) g + e_2 e_3 / e_1\right]}}\\ \nonumber
&= \int_0^1 \frac{dg}{\sqrt{\frac23 (1- g) \left[ e_1 (g^2 + g) + 6 g + e_2 e_3 / e_1 \right]}} \\ \nonumber
& = \int_0^1 \frac{dg}{\sqrt{\frac23 (1- g) \left[\tilde{e}_1 (g^2 + g) + 6 g + \tilde{e}_2 \tilde{e}_3 / \tilde{e}_1 \right]}},
\end{align}
where $\tilde{e}_3=-6-\tilde{e}_1-\tilde{e}_2$.
Since $0<\tilde{e}_1 < e_1$, and then $\tilde{e}_1 (g^2 + g) < e_1 (g^2 + g)$, and hence it holds that 
\begin{align}\label{eq:quo}
\frac{e_2e_3}{e_1} < \frac{\tilde{e}_2 \tilde{e}_3}{\tilde{e}_1}. 
\end{align}
On the other hand, we have from \eqref{flux}
\begin{equation}\label{halfPhi}
\begin{aligned} 
\frac{\Phi}{2}&  = \int_0^{e_1} \frac{f\, df}{\sqrt{\frac23 (e_1 -f ) (f-e_2) (f-e_3)}}\\ 
&= \int_0^1 \frac{g \, dg}{\sqrt{\frac23 (1- g) \left[ \frac{1}{e_1}(g^2 + g) + \frac{6}{e_1^2} g + \frac{e_2e_3}{e_1^3}\right]}}\\ 
& = \int_0^1 \frac{g \, dg}{\sqrt{\frac23 (1- g) \left[ \frac{1}{\tilde{e}_1}(g^2 + g) + \frac{6}{\tilde{e}_1^2} g + \frac{\tilde{e}_2 \tilde{e}_3}{\tilde{e}_1^3}\right]}}.
\end{aligned}
\end{equation}
However, due to  $0<\tilde{e}_1 < e_1$ and \eqref{eq:quo}, we have
\begin{align}\nonumber
\frac{1}{e_1} (g^2 + g) + \frac{6}{e_1^2} g + \frac{e_2 e_3}{e_1^3}
< \frac{1}{\tilde{e}_1 }(g^2 + g) + \frac{6}{\tilde{e}_1^2} g + \frac{\tilde{e}_2 \tilde{e}_3}{\tilde{e}_1^3}. 
\end{align}
This leads to a contradiction with \eqref{halfPhi}.

(ii) Proof of Part (3) in Proposition \ref{Lemmapureoutflow2}.
Finally, let us prove the asymptotics of  $\Phi_{max}^{(1, 0)} (\alpha)$ when $\alpha\to 0$ or $\frac{\pi}{2}$. 
We consider  $\alpha\to 0$ first. In this case, we have $e_1^*(\alpha) \to +\infty$, where  $e_1^*(\alpha)$ is defined in \eqref{angle-10} explicitly.
As $e_1\to +\infty$, we have
\begin{equation} \label{eq:asy1}
\begin{aligned}
    \lim_{e_1\to +\infty} \sqrt{e_1} I_{+}(e_1, 0)& =   \lim_{e_1\to +\infty}\int_0^{e_1} \frac{\sqrt{e_1} \, df}{\sqrt{-\frac23 (f-e_1)f (f+ 6 + e_1)}}\\
  & =  \lim_{e_1\to +\infty}\int_0^1 \frac{dg}{\sqrt{-\frac23 (g-1) g (g +1+ 6/e_1 )}} \\
   & =  \int_0^1 \frac{dg}{\sqrt{\frac23 (1-g) g (g +1 )}}  = \frac{\sqrt{6\pi}  \Gamma(5/4) }{  \Gamma(3/4) }.
\end{aligned}
\end{equation}
And as $e_1\to +\infty$,
\begin{equation}\label{eq:asy2}
    \begin{aligned}
      \lim_{e_1\to +\infty} \frac{ J_{+}(e_1, 0)}{\sqrt{e_1}} &= \lim_{e_1\to +\infty} \int_0^1 \frac{ g\, dg}{\sqrt{\frac23(1-g) g (g+ 6/e_1 + 1)}}  \\
   &= \int_0^1 \frac{g \, dg}{\sqrt{\frac23 (1-g) g (g +1 )}}  = \frac{\sqrt{\frac{2}{3}\pi}  \Gamma(7/4) }{  \Gamma(5/4) }.   
    \end{aligned}
\end{equation}
Hence, we  have as $\alpha \to 0^+$ that
\begin{align*}
   \lim_{\alpha \to 0^+} \alpha \Phi_{max}^{(1, 0) }(\alpha) &=  \lim_{\alpha \to 0^+} 2 \alpha   J_{+}(e_1^*(\alpha), 0) \\
   &= \lim_{\alpha \to 0^+} 2\frac{ J_{+}(e_1^*(\alpha), 0)}{\sqrt{e_1^*(\alpha)}}\sqrt{e_1^*(\alpha)} I_{+}(e_1^*(\alpha), 0)\\
   &= \lim_{e_1^*(\alpha) \to +\infty} 2\frac{ J_{+}(e_1^*(\alpha), 0)}{\sqrt{e_1^*(\alpha)}}\sqrt{e_1^*(\alpha)} I_{+}(e_1^*(\alpha), 0)\\
   &=  2\frac{\sqrt{6\pi}  \Gamma(5/4) }{  \Gamma(3/4) }\frac{\sqrt{\frac{2}{3}\pi}  \Gamma(7/4) }{  \Gamma(5/4) }=
    \frac{4\pi \Gamma(7/4) }{  \Gamma(3/4) }   = 3\pi.
\end{align*}
On the other hand, $I_{+}(e_1, 0)$ increases to $\frac{\pi}{2}$, as $e_1$ decreases to $0$. Hence, we  have $e_1^*(\alpha) \to 0^+$, as   $\alpha\to \frac{\pi}{2}$. 
Now we compute
\begin{align*}
    \frac{d}{de_1} I_{+}(e_1, 0) & =\frac{d}{de_1} \int_0^{e_1} 
    \frac{df}{\sqrt{-\frac23(f-e_1)f(f+6+e_1 )}}  \\
    & = \frac{d}{de_1} \int_0^1 \frac{dg}{\sqrt{\frac23 (1-g)g(e_1 g+6+e_1)}} \\
    &= \int_0^1 \frac{-\frac12 (g+1)\, dg}{\sqrt{\frac23 (1-g)g (e_1 g + 6+ e_1)^3}}. 
\end{align*}
Hence
\begin{equation}\label{eq:limt1}
\begin{aligned}
     \lim_{e_1\to 0}  \frac{I_{+}(e_1, 0)-\frac{\pi}{2}}{e_1 -0}&= \lim_{e_1\to 0}  \frac{I_{+}(e_1, 0)-I_+(0,0)}{e_1 -0}= \lim_{e_1\to 0} \frac{d}{de_1} I_{+}(e_1, 0) \\
     &= -\frac{1}{24} \int_{0}^{1}\frac{1+g}{\sqrt{(1-g)g}} dg
    = - \frac{\pi}{16}.
\end{aligned}
\end{equation}
Moreover,
\begin{equation}\label{eq:limt2}
\begin{aligned}
  \lim_{e_1\to 0}   \frac{J_{+}(e_1, 0)}{e_1}  &=  \lim_{e_1\to 0}\frac{1}{e_1}\int_0^{e_1} \frac{f\, df }{\sqrt{-\frac23 (f - e_1) f (f+ 6 + e_1)}}  \\
  &= \lim_{e_1\to 0}\frac{1}{e_1}\int_0^1 \frac{\sqrt{e_1} g \, dg }{\sqrt{\frac23(1-g) g (g+ 6/e_1 + 1)}} \\
  &= \lim_{e_1\to 0}\int_0^1 \frac{g \, dg }{\sqrt{\frac23(1-g) g (e_1g+ 6 + e_1)}} \\
  &=\int_0^1 \frac{g \, dg }{\sqrt{4(1-g) g }} =\int_0^{\frac{\pi}{2}} \frac{2\sin^2\theta \sin\theta \cos \theta}{2\sin\theta \cos\theta} d \theta =\frac{\pi}{4}.
\end{aligned}
\end{equation}
Finally, combining \eqref{eq:limt1} and \eqref{eq:limt2}, we conclude that 
\begin{align*}
    \lim_{\alpha\to \frac{\pi}{2}} \frac{ \Phi_{max}^{(1, 0)}(\alpha)}{\frac{\pi}{2}-\alpha} &= \lim_{\alpha\to \frac{\pi}{2}}  \frac{2 J_{+}(e_1^*(\alpha), 0)}{\frac{\pi}{2}-\alpha} = \lim_{\alpha\to \frac{\pi}{2}}  \frac{2 J_{+}(e_1^*(\alpha),0)}{e_1^*(\alpha)}\frac{e_1^*(\alpha)}{\frac{\pi}{2}-\alpha}\\
    &=  \lim_{e_1^*(\alpha) \to 0}  \frac{2 J_{+}(e_1^*(\alpha),0)}{e_1^*(\alpha)}\frac{e_1^*(\alpha)-0}{\frac{\pi}{2}- I_{+}(e_1^*(\alpha),0)}=8.
\end{align*}
In summary,  for pure outflow, it holds that
\begin{equation}
\begin{aligned}\label{eq:asy3}
    \Phi_{max}^{(1, 0)}(\alpha) =\frac{3\pi}{\alpha} + o \left(\frac{1}{\alpha}\right)\mbox{ as } \alpha \to 0, \ \mbox{and}\ \    \Phi_{max}^{(1, 0)} (\alpha) = 8\left(\frac{\pi}{2}-\alpha\right)  + o \left(\frac{\pi}{2}-\alpha \right) \mbox{ as }  \alpha\to \frac{\pi}{2}.   
\end{aligned}
\end{equation} 
This completes the proof for Proposition \ref{Lemmapureoutflow2}. 
\end{proof}

\begin{remark}
    In fact, according to \cite{DrazinRiley}, it has been computed numerically that 
    \begin{align}\nonumber
 \Phi_{max}^{(1, 0)}(\alpha) \simeq 2.934 \sqrt{e_1^*(\alpha)}, ~
 \alpha \simeq \frac{3.211}{\sqrt{e_1^*(\alpha)}}, \textrm{ and } 
 \Phi_{max}^{(1, 0)}(\alpha) \simeq  \frac{9.424}{\alpha},  
  \textrm{ as } \alpha \to 0. 
    \end{align}
    This is consistent with our computations in \eqref{eq:asy1}, \eqref{eq:asy2} and ${\eqref{eq:asy3}}_1$ when the angle $\alpha\to0$. 
    Our expansion
    $\Phi_{max}^{(1, 0)} (\alpha) = 8\left(\frac{\pi}{2}-\alpha\right)  + o \left(\frac{\pi}{2}-\alpha \right) \mbox{ as }  \alpha\to \frac{\pi}{2}$ seems to be new and not mentioned in the literature.
\end{remark}

\begin{remark}
    The fact that  $\alpha=\frac{\pi}{2}$ is a borderline case for pure outflows is known since the numerical evidence of \cite{Rosenhead40}. There was also numerical evidence for the existence of the maximum flux $\Phi_{max}^{(1, 0)}(\alpha)$. 
    Our contribution here is a rigorous justification for these facts and a more detailed description of the asymptotic of $\Phi_{max}^{(1, 0)}(\alpha)$ as $\alpha\to 0$ and $\frac{\pi}{2}$.
\end{remark}

\section{Periodic flow-Type $(m, m)$ flow}\label{sectionperiodicflow}
In this section, we consider the periodic flows, i.e., the flow of type $(m, m)$, $m\geq 1$.
The main objective of this section is to prove the following two propositions which are about the existence and non-existence of type $(1,1)$ flows and type $(m,m)$ flows with $m\geq 2$, respectively. 
\begin{pro}\label{Lemmaperiodicflow1}
Consider a self-similar (SS) solution of type (1,1) to the Navier-Stokes equations \eqref{SNS} with no-slip boundary condition \eqref{NoslipBC} and the flux condition \eqref{eq:flux}.
\begin{enumerate}
  \item 
  Assume that $0< \alpha < \frac{\pi}{2}$, there exists a maximum flux  $\Phi_{max}^{(1,1)}(\alpha)=\Phi_{max}^{(1, 0)}(\alpha)> 0$ which is given in \eqref{eq:maxifluxpureout} above, such that for every $\Phi <  \Phi_{max}^{(1, 1)}(\alpha)$, there is a unique SS solution of type (1,1); on the other hand, if $\Phi > \Phi_{max}^{(1, 1)}(\alpha)$, there is no SS solution of type (1,1).
\item
Assume that $\alpha \geq \frac{\pi}{2}$, there exists a maximum flux $\Phi_{max}^{(1, 1)} (\alpha)\leq 0$ which is defined by \eqref{periodic-10} below, such that for every $\Phi < \Phi_{max}^{(1, 1)}(\alpha)$, there is a unique SS solution of type (1,1); on the other hand, if $\Phi > \Phi_{max}^{(1, 1)} (\alpha)$, there is no SS solution of type (1,1). 
\item 
 The maximum flux $\Phi_{max}^{(1, 1)}(\alpha)$ is decreasing with respect to $\alpha\in(0,\pi)$. Moreover,  $\Phi^{(1,1)}_{max}<\frac{\pi^2}{\alpha}-4\alpha$ for $0<\alpha<\pi$.
\end{enumerate}
\end{pro}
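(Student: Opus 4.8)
The plan is to treat a type $(1,1)$ flow as a solution of the reduced problem \eqref{eq:ODEsys} whose profile $f$ has exactly one interior zero, and to analyze it through the first integral \eqref{eq:f'square}. First I would pin down the shape: $f$ rises from $0$ to a positive maximum, returns to $0$, descends to a negative minimum, and returns to $0$, so that both the maximum $e_1>0$ and the minimum $e_2<0$ are turning points, hence roots of $Q$. A real cubic with two real roots has all roots real, so necessarily $e_3\le e_2<0<e_1$ with $e_3=-6-e_1-e_2$; the constraint $e_3\le e_2$ reads $e_2\ge -3-\tfrac{e_1}{2}$, and $Q(0)=\tfrac23 e_1 e_2 e_3>0$ guarantees transversal zero crossings. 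Reading the profile as one full period on $[-\alpha,\alpha]$, the angle and flux conditions split into an outflow and an inflow part,
\[
\alpha = I_+(e_1,e_2)+I_-(e_1,e_2),\qquad \frac{\Phi}{2}=J_+(e_1,e_2)+J_-(e_1,e_2),
\]
where $I_+,J_+$ are the pure-outflow integrals \eqref{angle}--\eqref{flux} and $I_-(e_1,e_2)=\int_{e_2}^{0}\tfrac{df}{\sqrt{Q}}$, $J_-(e_1,e_2)=\int_{e_2}^{0}\tfrac{f\,df}{\sqrt{Q}}<0$ are their inflow analogues. Everything then reduces to inverting the map $(e_1,e_2)\mapsto(\alpha,\tfrac\Phi2)$ on $D=\{e_1>0,\ -3-\tfrac{e_1}{2}\le e_2<0\}$.

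To organize the inversion I would fix $\alpha$ and describe the level curve $C_\alpha=\{I_++I_-=\alpha\}$. Using Lemma \ref{monotone} for $I_+$ together with the behavior of $I_-$ (which tends to $0$ as $e_2\to0^-$ and to $+\infty$ as $e_2\to(-3-\tfrac{e_1}{2})^+$, the double-root limit), I expect $I=I_++I_-$ to be strictly decreasing in $e_2$ for each fixed $e_1$, so that $C_\alpha$ is a graph $e_2=e_2(e_1)$. Since $I(e_1,0)=I_+(e_1,0)\in(0,\tfrac\pi2)$, a value $e_2(e_1)<0$ exists exactly when $I_+(e_1,0)<\alpha$: for $0<\alpha<\tfrac\pi2$ this gives $e_1\in(e_1^*(\alpha),\infty)$, with the endpoint $e_1\to e_1^*(\alpha)^+$ (see \eqref{angle-10}) being the pure-outflow limit $e_2\to0^-$; for $\alpha\ge\tfrac\pi2$ it gives all $e_1>0$, with $e_1\to0^+$ a pure-inflow limit. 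At the other end $e_1\to\infty$ the profile develops a deep inflow ($e_2\to-\infty$ along the double-root edge) and the flux tends to $-\infty$. As $\mathcal J(e_1):=2J(e_1,e_2(e_1))$ is continuous, the intermediate value theorem already produces a type $(1,1)$ solution for every $\Phi$ between $-\infty$ and the endpoint value. Evaluating that endpoint value via Proposition \ref{Lemmapureoutflow2} gives $\lim \mathcal J=2J_+(e_1^*(\alpha),0)=\Phi_{max}^{(1,0)}(\alpha)$ when $\alpha<\tfrac\pi2$, and the explicit limiting (pure-inflow) integral, which is $\le0$ and is the quantity figuring in Part (2), when $\alpha\ge\tfrac\pi2$; in particular $\Phi_{max}^{(1,1)}(\tfrac\pi2)=0$.

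For uniqueness and for the sharp non-existence above the maximal flux, the crux, and the main obstacle, is to show that $\mathcal J(e_1)$ is strictly decreasing along $C_\alpha$, equivalently that the Jacobian $\partial_{e_1}J\,\partial_{e_2}I-\partial_{e_2}J\,\partial_{e_1}I$ has a fixed sign on $D$. I would attack this either by differentiating the (in)complete elliptic-integral representations of $I_\pm,J_\pm$ and estimating the resulting integrals, or by a direct comparison in the spirit of the pure-outflow uniqueness argument in Proposition \ref{Lemmapureoutflow2}: assume two solutions with $\tilde e_1<e_1$, rescale $f=e_1 g$, and compare the integrands of the angle and flux relations termwise to reach a contradiction. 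Granting this monotonicity, $\mathcal J$ is a continuous strictly decreasing bijection of the $e_1$-interval onto $(-\infty,\Phi_{max}^{(1,1)}(\alpha))$, yielding existence and uniqueness for $\Phi<\Phi_{max}^{(1,1)}(\alpha)$ and non-existence for $\Phi>\Phi_{max}^{(1,1)}(\alpha)$, which is Parts (1)--(2).

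Finally, for Part (3): monotonicity of $\Phi_{max}^{(1,1)}(\alpha)$ follows on $(0,\tfrac\pi2)$ from $\Phi_{max}^{(1,1)}=\Phi_{max}^{(1,0)}=2J_+(e_1^*(\alpha),0)$, since $e_1^*(\alpha)$ decreases in $\alpha$ (Lemma \ref{monotone}) while $J_+(\cdot,0)$ increases, and on $[\tfrac\pi2,\pi)$ from differentiating the explicit limiting integral in $\alpha$; continuity at $\tfrac\pi2$ holds since both expressions vanish there. For the bound $\Phi_{max}^{(1,1)}(\alpha)<\tfrac{\pi^2}{\alpha}-4\alpha$, I observe that $\tfrac{\pi^2}{\alpha}-4\alpha=\alpha\big((\tfrac{\pi}{\alpha})^2-4\big)$ is $\alpha$ times the $(-4)$-shift of the first antisymmetric Dirichlet eigenvalue $(\tfrac{\pi}{\alpha})^2$ on $(-\alpha,\alpha)$, the threshold at which the linearization $f''+4f=b$ first admits a sign-changing mode (at $\alpha=\tfrac\pi2$, namely $f=\sin2\theta$), which is precisely why $\alpha=\tfrac\pi2$ is borderline. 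The strict inequality I would derive by testing $-f''-4f=f^2-b$ against a nonnegative first eigenfunction and eliminating $b$ through the integrated equation, the nonlinear term $f^2$ producing the strictness, or equivalently by an elliptic-integral estimate of the maximal-flux formula. The most delicate point throughout remains the global sign of the elliptic-integral Jacobian governing the flux monotonicity along $C_\alpha$.
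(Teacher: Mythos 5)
Your set-up (roots $e_3<e_2<0<e_1$, angle and flux split as $\alpha=I_++I_-$, $\Phi/2=J_++J_-$, level curve of the angle, endpoint limits, IVT for existence) is sound and matches the skeleton of the paper's argument, but the proposal has a genuine gap exactly where you flag "the crux": the strict monotonicity of the flux $\mathcal J(e_1)=2J(e_1,e_2(e_1))$ along the level curve $C_\alpha$. Without this, the IVT only yields existence for $\Phi$ below the \emph{endpoint limit} $2J_+(e_1^*(\alpha),0)$; it does not rule out that $\sup\mathcal J$ exceeds this limit, so even the identification $\Phi^{(1,1)}_{max}(\alpha)=\Phi^{(1,0)}_{max}(\alpha)$ and the non-existence statement for $\Phi>\Phi^{(1,1)}_{max}$ remain unproven, and uniqueness is entirely open. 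Neither of your two proposed attacks closes this. In particular, the termwise comparison that worked for pure outflows (Proposition \ref{Lemmapureoutflow2}) relied on the substitution $f=e_1g$ mapping every competitor onto the \emph{same} interval $[0,1]$ with a sign-definite integrand; for type $(1,1)$ the integration runs over $[e_2,e_1]$ with both endpoints varying and the flux integrand $f\,df/\sqrt{Q}$ changes sign, so the pointwise comparison collapses. (A smaller instance of the same issue: the claim that $I=I_++I_-$ is strictly decreasing in $e_2$, which you need for $C_\alpha$ to be a graph, is only "expected" in your write-up; it is true, but since $I_+$ \emph{increases} in $e_2$ while $I_-$ decreases, it requires proof.)

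The paper's resolution is a different device that you would need (or an equivalent): pass to the elliptic modulus $\bar{\gamma}=\sqrt{(e_1-e_2)/(e_1-e_3)}$. Fixing $\alpha$, the angle condition \eqref{periodic-1} gives $e_1-e_3=6K^2(\bar{\gamma})/\alpha^2$, so all three roots become explicit functions of $\bar{\gamma}$ alone (\eqref{periodic-4}--\eqref{periodic-6}) -- this also yields $\partial I/\partial e_2<0$ as a by-product -- and the flux collapses to the scalar identity $\alpha^2+\tfrac{\alpha\Phi}{4}=H(\bar{\gamma})$ in \eqref{periodic-3}, where $H$ is strictly decreasing from $\pi^2/4$ to $-\infty$ (Lemma \ref{functionH}, quoted from Guillod--Wittwer). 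Monotonicity of $e_2$ (resp. $e_1$) in $\bar{\gamma}$ then shows the admissible range is $\bar{\gamma}\geq\bar{\gamma}^*$, and the maximum flux, uniqueness, and sharp non-existence all follow at once from the injectivity of $H$. This one-parameter reduction is precisely the content your proposal is missing; it also disposes of Part (3) instantly, since $H(\bar{\gamma})\leq\pi^2/4$ gives $\Phi<\pi^2/\alpha-4\alpha$ in one line, whereas your spectral/eigenfunction heuristic for that bound is not carried out and, as sketched (testing against an antisymmetric eigenfunction, which is orthogonal to the flux functional), does not obviously produce it.
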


\begin{pro}\label{pro:periodicflow2}
Consider a self-similar (SS) solution of type $(m,m)$ to the Navier-Stokes equations \eqref{SNS} with no-slip boundary condition \eqref{NoslipBC} and the flux condition \eqref{eq:flux}.
Assume that  $\alpha\in (0, \pi)$ and $m\geq 2$. There exists a maximum flux $\Phi_{max}^{(m, m)}(\alpha) = m \Phi_{max}^{(1, 0)}\left(\frac{\alpha}{m}\right)$, such that
for every $\Phi < \Phi_{max}^{(m, m)}(\alpha)$, there is a unique SS solution of type $(m,m)$; on the other hand, if $\Phi >\Phi_{max}^{(m, m)}(\alpha)$, there is no SS solution of type $(m,m)$.
\end{pro}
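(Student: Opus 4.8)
The plan is to reduce the type $(m,m)$ problem entirely to the type $(1,1)$ problem already settled in Proposition \ref{Lemmaperiodicflow1}, by exploiting the fact that a type $(m,m)$ flow must be exactly $m$ identical copies of a single oscillation period. The bridge is that, since $\alpha<\pi$ and $m\ge 2$, the reduced half-angle satisfies $\frac{\alpha}{m}<\frac{\pi}{m}\le\frac{\pi}{2}$, the regime where Proposition \ref{Lemmaperiodicflow1} gives $\Phi_{max}^{(1,1)}(\frac{\alpha}{m})=\Phi_{max}^{(1,0)}(\frac{\alpha}{m})$; hence establishing the correspondence immediately yields the formula $\Phi_{max}^{(m,m)}(\alpha)=m\,\Phi_{max}^{(1,0)}(\frac{\alpha}{m})$ together with existence, non-existence, and uniqueness.

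First I would analyze the roots, paralleling Lemma \ref{lem:roots}. Because a type $(m,m)$ flow takes both signs, the first integral \eqref{eq:f'square} forces $Q$ to have three distinct real roots $e_3<e_2<0<e_1$ with $f$ oscillating within $[e_2,e_1]$; the degenerate case $e_2=e_3$ is excluded, since it produces a solution of infinite period that cannot realize $2m$ humps of finite width. The crucial point is \emph{rigidity}: the constants $b$ and $E$ in \eqref{eq:f'square} are fixed along the whole solution, so $Q(f)$ is the same cubic everywhere. Consequently every outflow hump attains the same maximum $e_1$ over the same angular width $2\int_0^{e_1}\frac{df}{\sqrt{Q(f)}}=2I_+(e_1,e_2)$, and every inflow hump attains $e_2$ over the width $2\int_{e_2}^{0}\frac{df}{\sqrt{Q(f)}}$. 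The humps must moreover strictly alternate in sign, because $Q(0)>0$ forbids a double zero of $f$. Hence $f$ is genuinely periodic with period $P=2\left(\int_0^{e_1}\frac{df}{\sqrt{Q(f)}}+\int_{e_2}^{0}\frac{df}{\sqrt{Q(f)}}\right)$, and the $2m$ humps on $(-\alpha,\alpha)$ form exactly $m$ identical periods with $2\alpha=mP$, the endpoints $\pm\alpha$ being transversal zeros.

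Next I would make the reduction precise in both directions. Restricting a type $(m,m)$ solution to one period and translating it onto $(-\frac{\alpha}{m},\frac{\alpha}{m})$ yields a function solving ${\eqref{eq:ODEsys}}_1$, vanishing at $\pm\frac{\alpha}{m}$, with one outflow and one inflow hump and flux $\frac{\Phi}{m}$, that is, a type $(1,1)$ flow of half-angle $\frac{\alpha}{m}$ and flux $\frac{\Phi}{m}$. Conversely, given $\Phi<m\,\Phi_{max}^{(1,0)}(\frac{\alpha}{m})=m\,\Phi_{max}^{(1,1)}(\frac{\alpha}{m})$, Proposition \ref{Lemmaperiodicflow1} supplies a unique type $(1,1)$ flow $f_1$ on $(-\frac{\alpha}{m},\frac{\alpha}{m})$ with flux $\frac{\Phi}{m}$, which I extend with period $\frac{2\alpha}{m}$ to $(-\alpha,\alpha)$. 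This extension is an admissible solution: at each junction $f_1$ and its continuation agree ($f=0$), the one-sided derivatives agree (consecutive crossings of the same direction both give $f'=\pm\sqrt{Q(0)}$), and then $f''=b$ matches as well, so the periodic extension is a genuine analytic solution of ${\eqref{eq:ODEsys}}_1$ of type $(m,m)$ with flux $\Phi$. Existence for $\Phi<\Phi_{max}^{(m,m)}(\alpha)$, non-existence for $\Phi>\Phi_{max}^{(m,m)}(\alpha)$ (a type $(m,m)$ flow would force a type $(1,1)$ flow with flux exceeding $\Phi_{max}^{(1,1)}(\frac{\alpha}{m})$), and uniqueness all transfer from Proposition \ref{Lemmaperiodicflow1}, since the rigidity above shows a type $(m,m)$ flow is completely determined by its single period.

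I expect the main obstacle to be the rigidity and gluing step rather than the elliptic-integral estimates. Concretely, one must argue carefully that (i) the shared first integral forces every period to be an exact translate of every other, ruling out any uneven configuration of $m$ humps of differing heights or widths, and (ii) the periodic extension built from a single type $(1,1)$ cell is $C^1$, hence (via the ODE) smooth, across the gluing points, which hinges on the equality of the boundary derivatives $\pm\sqrt{Q(0)}$ at consecutive zeros of the same crossing direction. Once this is secured, the uniqueness of the underlying type $(1,1)$ cell from Proposition \ref{Lemmaperiodicflow1} closes the argument; the only bookkeeping point is to track the orientation (whether the first hump is an outflow or inflow), which corresponds precisely to the reflection $\theta\mapsto-\theta$ already present in the type $(1,1)$ analysis and in the non-symmetry of these flows.
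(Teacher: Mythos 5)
Your proposal is correct and takes essentially the same route as the paper: the paper likewise reduces a type $(m,m)$ flow to a single type $(1,1)$ cell with half-angle $\frac{\alpha}{m}<\frac{\pi}{2}$ and flux $\frac{\Phi}{m}$ (via $I_{m,m}=mI$, $J_{m,m}=mJ$) and then invokes Part (1) of Proposition \ref{Lemmaperiodicflow1} to get $\Phi_{max}^{(m,m)}(\alpha)=m\,\Phi_{max}^{(1,1)}\left(\frac{\alpha}{m}\right)=m\,\Phi_{max}^{(1,0)}\left(\frac{\alpha}{m}\right)$ together with existence, non-existence, and uniqueness. The rigidity and $C^1$-gluing arguments you spell out are precisely the ``repetition of the same process $m$ times'' step that the paper treats as immediate from the shared first integral \eqref{eq:f'square}, so your write-up simply makes that step explicit.
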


Propositions \ref{Lemmaperiodicflow1} and \ref{pro:periodicflow2} pertain to the case $\Phi\neq \Phi^{(m, m)}_{max}(\alpha)$. The critical case $\Phi=\Phi^{(m, m)}_{max}(\alpha)$ will be discussed in detail in Section \ref{sum}.



We begin with some preliminary results.
\begin{lemma}\label{lem:(m,m)}
For the type $(m,m)$ flows, we must have $e_1\geq 0$ and $e_2$ and $e_3$ are real. Moreover, $e_2\leq0$ and $e_3<e_2$.
\end{lemma}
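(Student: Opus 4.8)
The plan is to read off the root structure directly from the first-order relation \eqref{eq:f'square}, $(f')^2 = Q(f)$, exploiting the fact that a type $(m,m)$ flow genuinely changes sign. First I would record the qualitative consequences of $m\geq 1$: by Definition \ref{def:type} there is at least one outflow interval and one inflow interval, so $f$ attains a strictly positive global maximum $M$ and a strictly negative global minimum $\mu$ on $[-\alpha,\alpha]$. Because $f$ vanishes at the endpoints by ${\eqref{eq:ODEsys}}_2$ while $M>0>\mu$, both extrema are attained at interior points, where $f'=0$. Evaluating \eqref{eq:f'square} there gives $Q(M)=Q(\mu)=0$, so $M$ and $\mu$ are two distinct real roots of the cubic $Q$. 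A real cubic cannot have exactly two real roots, so the third root is real as well; this settles that $e_2$ and $e_3$ are real.

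Next I would order the three real roots as $e_3\leq e_2\leq e_1$ and use the sign of the leading coefficient. Since $Q(f)=-\frac23(f-e_1)(f-e_2)(f-e_3)$ opens downward, one has $Q\geq 0$ precisely on $(-\infty,e_3]\cup[e_2,e_1]$. The trajectory of $f$ must stay in one connected component of $\{Q\geq 0\}$; it cannot lie in the unbounded piece $(-\infty,e_3]$ because $f$ attains the positive value $M$, so it lives in the bounded well $[e_2,e_1]$. Consequently the oscillation runs exactly between the two turning values, i.e. $M=e_1$ and $\mu=e_2$. This immediately yields $e_1=M>0$ (so $e_1\geq 0$) and $e_2=\mu<0$ (so $e_2\leq 0$), while the third root $e_3$, determined by \eqref{roots}, is the smallest of the three and satisfies $e_3\leq e_2<0$.

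The remaining and most delicate point is the strict inequality $e_3<e_2$. Here I would invoke the standard fact that the ``time'' needed for $f$ to reach a turning value, measured by $\int \frac{df}{\sqrt{Q(f)}}$, converges near a simple zero of $Q$ but diverges logarithmically near a double zero. Since a genuine type $(m,m)$ flow realizes $m$ full inflow excursions inside the \emph{finite} sector $(-\alpha,\alpha)$, the minimum value $e_2$ is actually attained and departed from at interior points in finite $\theta$; this forces $e_2$ to be a simple root of $Q$, hence $e_2\neq e_3$, and with the ordering $e_3\leq e_2$ we conclude $e_3<e_2$. The main obstacle is precisely this last step: one must argue carefully that the degenerate case $e_2=e_3$ produces a heteroclinic-type orbit of infinite period, which cannot accommodate the oscillatory boundary-value profile on a bounded interval and is therefore excluded. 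By contrast, the realness and sign statements follow purely from $Q\geq 0$ together with the sign change of $f$.
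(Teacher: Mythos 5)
Your proof is correct, and for the realness and sign statements it takes a genuinely different (and arguably cleaner) route than the paper. The paper argues claim by claim via contradictions anchored at the boundary: $e_1\geq 0$ because $e_1<0$ would give $Q(0)<0$, contradicting $Q(0)=(f'(\pm\alpha))^2\geq 0$ from \eqref{eq:f'square} and ${\eqref{eq:ODEsys}}_2$; realness of $e_2,e_3$ because complex conjugate roots would make $Q>0$ on $\{f<0\}$, so $f$ would be monotone there and could never complete an inflow excursion; and $e_2\leq 0$ again from the sign of $Q(0)$. You instead extract everything from the interior extrema: the positive maximum $M$ and negative minimum $\mu$ of a type $(m,m)$ flow are interior critical points, hence two distinct real zeros of $Q$, which forces all three roots to be real and pins down $e_1=M>0$, $e_2=\mu<0$, $e_3\leq e_2$. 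This buys the strict inequalities $e_1>0$, $e_2<0$ and identifies the range of $f$ as exactly $[e_2,e_1]$, a fact the paper only records informally after the lemma. One step needs a one-line repair: you exclude the component $(-\infty,e_3]$ ``because $f$ attains the positive value $M$'', which presupposes $e_3<M$; that is immediate from \eqref{roots} and the ordering (since $3e_3\leq e_1+e_2+e_3=-6$ gives $e_3\leq -2<0<M$), or from noting that otherwise the two distinct roots $M,\mu$ would both have to equal $e_3$. For the strict inequality $e_3<e_2$ the two proofs coincide: the paper observes that $e_2=e_3$ would make the integral on the left-hand side of \eqref{periodic-01} diverge while it must equal the finite angle, which is precisely your finite-time-versus-double-zero argument; if you want that step fully airtight, note alternatively that a double root $e_2=e_3$ is an equilibrium of $f''=\frac12 Q'(f)$, so ODE uniqueness would force $f\equiv e_2$, contradicting $f(\pm\alpha)=0$.
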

\begin{proof}
    First, we show that $e_1 \geq 0$. Suppose $e_1< 0$, for $Q$ defined in \eqref{eq:f'square}, $Q(0)$ must be negative. This cannot happen due to the boundary condition ${\eqref{eq:ODEsys}}_2$  and \eqref{eq:f'square}. 
    
    Second, we show that $e_2$ and $e_3$ must be real. Suppose $e_2$ and $e_3$ are complex conjugates, then using 
$e_1\geq 0$, we have $Q(f)>0$ when $f<0$ which says $f^\prime $ cannot change its sign in the region $\{f<0\}$, hence is monotone on the region $\{f<0\}$. Then  $f$ cannot move from $0$ to a negative value and then move back to $0$. 

Third, we show that $e_2\leq 0$. We note $e_3$ must be negative, since $e_1 + e_2 + e_3 =-6$ and $e_1\geq 0$, $e_3\leq e_2$. If $e_2>0$, then $Q(0)$ must be negative. This cannot happen due to the Dirichlet boundary condition ${\eqref{eq:ODEsys}}_2$  and \eqref{eq:f'square}.  

Finally, we show $e_3<e_2$. If not, then $e_3=e_2$. This cannot happen due to \eqref{periodic-01} below as the integral on the left-hand side of \eqref{periodic-01} is finite.
\end{proof}
Then we can conclude that for type $(1,1)$ flows, the corresponding plot for $Q(f)$ is  Figure \ref{three real roots}. {For $f(\theta)$, either the value of solution $f$ increases from $0$ to $e_1$ and then decreases from $e_1$ to  $e_2\leq 0$, and finally increases back to 0 (see Figure \ref{1，1-a}); or the value of solution $f$ decreases from $0$ to $e_2$ and then increases from $e_2$ to  $e_1> 0$, and finally decreases back to 0 (see Figure \ref{1，1-b}). A general type $(m,m)$ flow is simply a repetition of the above process $m$ times; see Figures \ref{2，2-a} and \ref{2，2-b} for plots of type (2,2) flows.} Hence, Proposition \ref{pro:periodicflow2} will be a direct consequence of Proposition \ref{Lemmaperiodicflow1}.

\begin{figure}[htb]
\begin{subfigure}[h]{0.4\textwidth}
     \centering
   \includegraphics[width=0.92\textwidth]{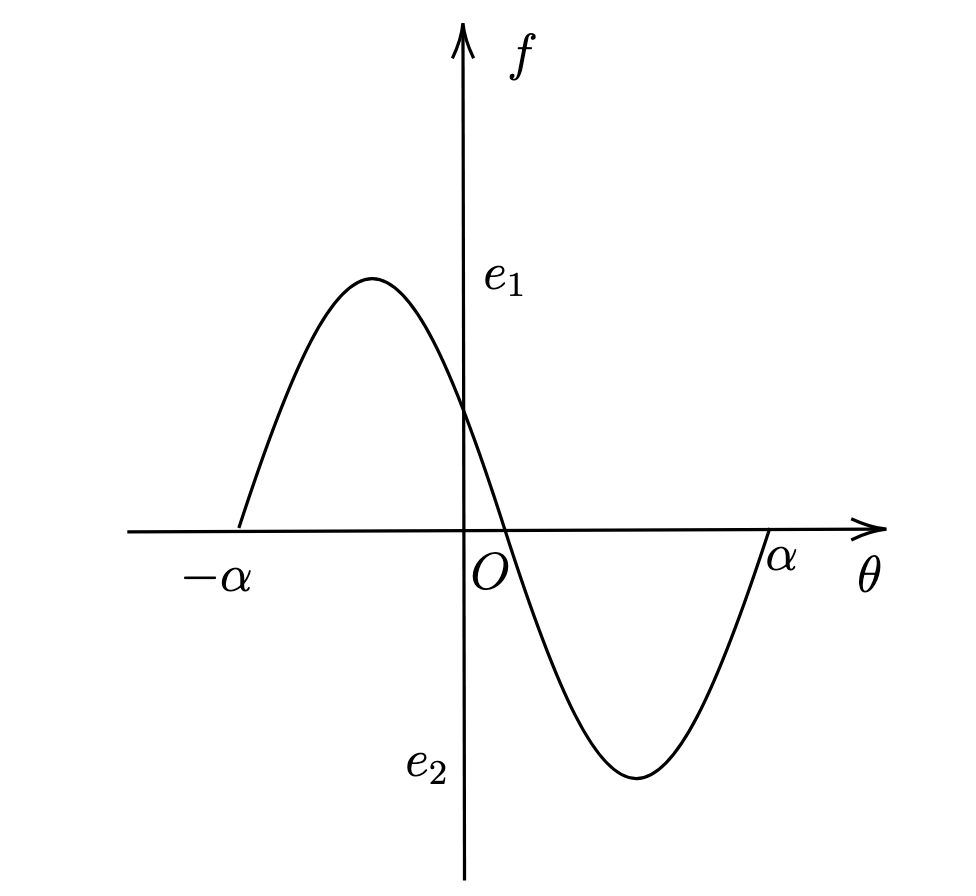}
			\caption{Type (1,1) flow }
   \label{1，1-a}
\end{subfigure}
  \hspace{0.3cm}
\begin{subfigure}[h]{0.4\textwidth}
     \centering
   \centering
    \includegraphics[width=0.8\textwidth]{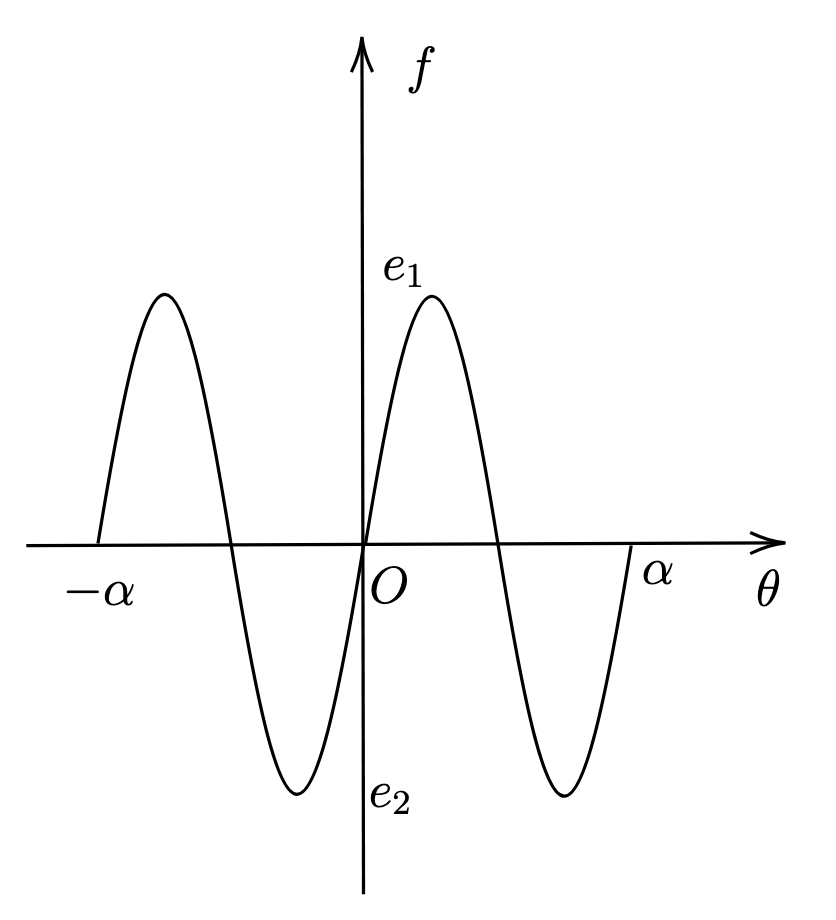}
			\caption{Type (2,2) flow }
   \label{2，2-a}
\end{subfigure}
\caption{Type  (1,1) flow and type (2,2) flow-case I}
\end{figure}

\begin{figure}[h]
\begin{subfigure}{0.4\textwidth}
     \centering
   \includegraphics[width=0.92\textwidth]{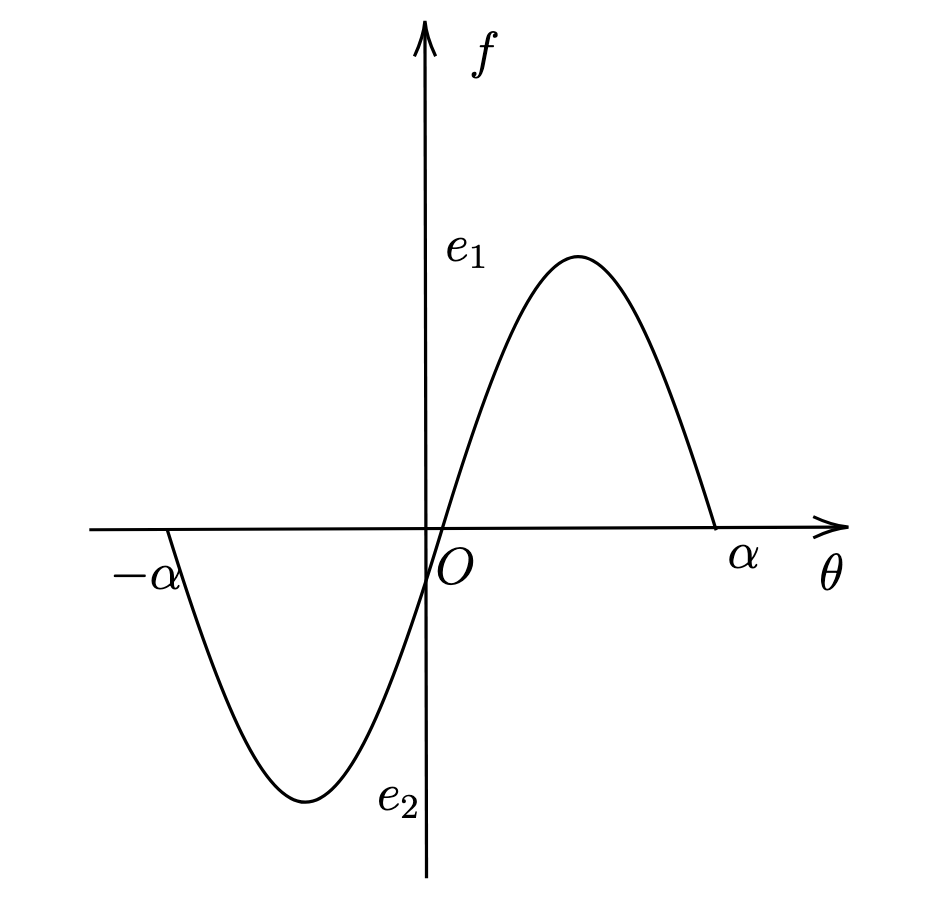}
			\caption{Type (1,1) flow }
   \label{1，1-b}
\end{subfigure}
\begin{subfigure}{0.4\textwidth}
     \centering
   \centering
    \includegraphics[width=0.92\textwidth]{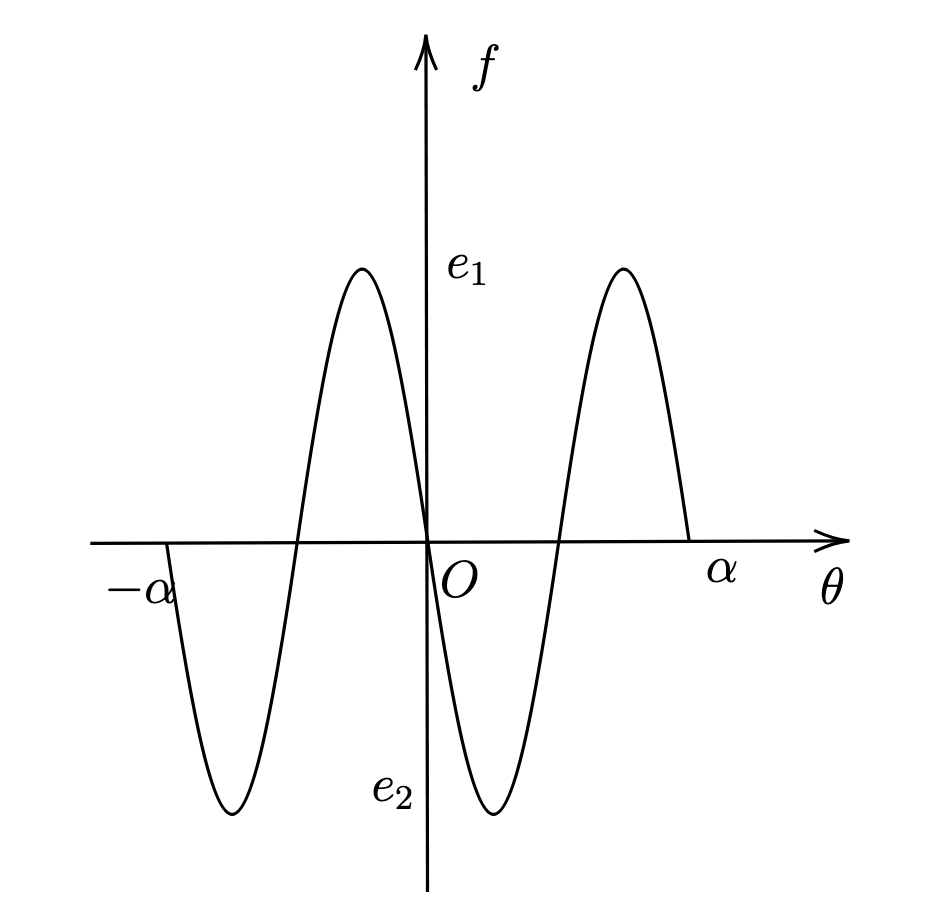}
			\caption{Type (2,2) flow }
   \label{2，2-b}
\end{subfigure}
\caption{Type  (1,1) flow and type (2,2) flow-case II}
\end{figure}

Due to the symmetry of the solution, it holds that for each flow of type $(1, 1)$,  the angle and flux condition can be rewritten as  
\begin{equation}\label{periodic-01}
I(e_1, e_2):=  \int_{e_2}^{e_1} \frac{df}{\sqrt{-\frac23 (f-e_1)(f-e_2) (f- e_3)}} = \alpha , 
\end{equation}
and 
\begin{equation}\label{periodic-02}
  J(e_1, e_2): =  \int_{e_2}^{e_1} \frac{f\, df}{\sqrt{-\frac23 (f-e_1) (f-e_2)(f-e_3)}} = \frac{\Phi}{2}. 
\end{equation}
Let $K( \gamma)$ and $E( \gamma)$ be the complete elliptic functions of the first kind and the second kind, respectively, i.e., 
\begin{equation}\label{defKE}
    K( \gamma) =\int_0^1 \frac{dt}{\sqrt{(1-t^2)(1- \gamma^2t^2})}\quad \text{and}\quad  E( \gamma) = \int_0^1 \frac{\sqrt{1-\gamma^2t^2}}{\sqrt{1-t^2}} dt.
\end{equation}
Denote
\begin{equation}\label{periodic-03}
 { \bar{\gamma}} = \sqrt{\frac{e_1 - e_2}{e_1 - e_3}}\in[0,1).
\end{equation} 
Then \eqref{periodic-01}-\eqref{periodic-02} can be rewritten as follows, 
\begin{equation}\label{periodic-1}
  \frac{ \sqrt{6} K( { \bar{\gamma}})}{\sqrt{e_1 - e_3}} =  \alpha,  
\end{equation}
\begin{equation}\label{periodic-3}
    \alpha^2 + \frac{\alpha \Phi}{4} =  [(\bar{\gamma}^2 -2)K( { \bar{\gamma}}) + 3E( \bar{ {\gamma}})]K( { \bar{\gamma}}) : = H( { \bar{\gamma}}). 
\end{equation}
Indeed, let 
\begin{equation*}t= \sqrt{\frac{e_1 -f}{e_1 - e_2}}.
\end{equation*}
It follows from \eqref{periodic-01}, \eqref{periodic-02} and \eqref{periodic-03} that one has
\begin{align*}
  \alpha 
  &= \int_{0}^{1} \frac{2\sqrt{e_1 - e_2} \, dt }{\sqrt{\frac23 (e_1-e_2) (1-t^2) (e_1- e_3- (e_1 - e_2)t^2)}}\\
  &= \int_0^1 \frac{\sqrt{6} \, dt }{\sqrt{(1-t^2) (e_1 - e_3 - (e_1 - e_2)t^2 )}}  \\
  &= \frac{1}{\sqrt{e_1 - e_3}}\int_0^1 \frac{\sqrt{6}\, dt}{\sqrt{(1-t^2) (1-  \bar{\gamma}^2 t^2)}}
  = \frac{\sqrt{6}}{\sqrt{e_1 -e_3}} K( \bar{\gamma}). 
\end{align*}
    On the other hand, 
\begin{align}\label{periodic-3-1} 
\begin{aligned}
    \Phi & = 2 \int_{e_2}^{e_1}\frac{f\, df}{\sqrt{-\frac23 (f-e_1) (f-e_2) (f-e_3)}}\\ 
    & = 2 \int_{0}^{1} \frac{ 2\sqrt{e_1 - e_2} \cdot \left[e_1 - (e_1-e_2)t^2\right]\, dt}{\sqrt{\frac23 (e_1-e_2)  (1-t^2) (e_1 - e_3 - (e_1 -e_2)t^2 )}}\\
    &= \frac{2\sqrt{6}}{\sqrt{e_1-e_3}}\int_0^1 \frac{[ e_1 -(e_1-e_2)t^2 ] \, dt}{\sqrt{(1-t^2)(1- \bar{\gamma}^2 t^2)}}\\
    &= \frac{2 \sqrt{6}}{\sqrt{e_1-e_3}} \int_0^1 \frac{[e_3 +   (e_1 - e_3) (1 -  \bar{\gamma}^2 t^2)]\, dt}{\sqrt{(1-t^2)(1- \bar{\gamma}^2t^2)}}\\
    &= \frac{2\sqrt{6}}{\sqrt{e_1-e_3}} \left[e_3 K( \bar{\gamma}) + (e_1-e_3) E( \bar{\gamma})\right].
\end{aligned}
\end{align} 
It follows from \eqref{periodic-03} that
\begin{equation*}
     \bar{\gamma}^2 (e_1-e_3) = e_1 - e_2 = 6 +2e_1 +e_3.
\end{equation*}
This implies 
\begin{equation}\label{periodic-3-2}
    e_3 = -2 + \frac13 ( \bar{\gamma}^2 -2) (e_1 - e_3).
\end{equation}
Taking \eqref{periodic-1} and  \eqref{periodic-3-2} into \eqref{periodic-3-1} yields
\begin{align*}
    \Phi &= 2\sqrt{6} \frac{-2 + \frac13 ( \bar{\gamma}^2-2)(e_1-e_3)}{\sqrt{e_1-e_3}} K( \bar{\gamma}) + 2\sqrt{6}\sqrt{e_1-e_3}E( \bar{\gamma})\\
    &=  -4\cdot \alpha  + \frac{2\sqrt{6}}{3}( \bar{\gamma}^2 -2) \frac{\sqrt{6}(K( \bar{\gamma}))^2}{\alpha} + 2 \sqrt{6}\cdot \frac{\sqrt{6}K( \bar{\gamma})}{\alpha}E( \bar{\gamma}).
\end{align*}
Hence one has
\begin{equation*}
    \Phi \alpha + 4\alpha^2 = 4( \bar{\gamma}^2 -2)K^2( \bar{\gamma}) + 12 K( \bar{\gamma}) E( \bar{\gamma})=4H( \bar{\gamma}).
\end{equation*}
This gives \eqref{periodic-3}.

Regarding the function $H({\gamma})$, it was proved by Guillod and Wittwer in \cite[Appendix A]{GuillodWittwer15SIAM} that  $H( {\gamma}) $ is a strictly decreasing function with respect to $ {\gamma}$. We recall their result as the following lemma.
\begin{lemma}\label{functionH}
The function $H$ defined by \eqref{periodic-3} is strictly decreasing, mapping $[0, 1)$ to $(-\infty, \frac{\pi^2}{4}]$. 
\end{lemma}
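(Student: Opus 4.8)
The plan is to pin down the two boundary/limit values of $H$ and then obtain strict monotonicity from a direct computation of $H'$. First, since $K(0)=E(0)=\frac{\pi}{2}$, one reads off $H(0)=\big[(0-2)\tfrac{\pi}{2}+3\tfrac{\pi}{2}\big]\tfrac{\pi}{2}=\frac{\pi^2}{4}$. For the other end I would use the classical facts $K(\gamma)\to+\infty$ and $E(\gamma)\to1$ as $\gamma\to1^-$: writing $H(\gamma)=K(\gamma)\big[(\gamma^2-2)K(\gamma)+3E(\gamma)\big]$ and noting $\gamma^2-2\to-1$, the bracket tends to $-\infty$ while $K\to+\infty$, so $H(\gamma)\to-\infty$. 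Once strict monotonicity is in hand, continuity of $H$ on $[0,1)$ together with the intermediate value theorem gives that $H$ is a strictly decreasing bijection of $[0,1)$ onto $(-\infty,\frac{\pi^2}{4}]$, which is precisely the range claim.

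Next I would compute $H'$ using the standard derivative formulas $\frac{dK}{d\gamma}=\frac{E-(1-\gamma^2)K}{\gamma(1-\gamma^2)}$ and $\frac{dE}{d\gamma}=\frac{E-K}{\gamma}$. Substituting these into $H'=2\gamma K^2+2(\gamma^2-2)KK'+3E'K+3EK'$ and clearing the positive denominator $\gamma(1-\gamma^2)$, all terms collapse into a single quadratic expression in $K$ and $E$; a direct calculation yields
\[
\gamma(1-\gamma^2)\,H'=(1-\gamma^2)K^2-2(2-\gamma^2)KE+3E^2=:P(\gamma).
\]
Thus the entire statement reduces to proving $P(\gamma)<0$ for $\gamma\in(0,1)$.

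The main obstacle is exactly this inequality. Viewed as a quadratic form in $(K,E)$, $P$ has discriminant $4(\gamma^4-\gamma^2+1)>0$ and is therefore indefinite, so no naive pointwise bound on an integrand can work — genuine information linking $K$ and $E$ is required. Dividing by $K^2$ and setting $\rho=E/K\in(0,1]$, the inequality $P<0$ is equivalent to $q(\rho):=3\rho^2-2(2-\gamma^2)\rho+(1-\gamma^2)<0$, i.e. to $\rho$ lying strictly between the two roots $\rho_\pm(\gamma)=\frac{(2-\gamma^2)\pm\sqrt{\gamma^4-\gamma^2+1}}{3}$. The lower bound $\rho>\rho_-$ is comfortable throughout (at $\gamma=0$ one has $\rho=1$ versus $\rho_-=\tfrac13$, and as $\gamma\to1^-$, $\rho_-=O(1-\gamma^2)$ while $E/K$ decays only logarithmically). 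The delicate half is the upper bound $E/K<\rho_+(\gamma)$, which is tight as $\gamma\to0^+$: a power-series check gives $P(\gamma)=-\frac{3\pi^2}{32}\gamma^4+O(\gamma^6)$, so $P$ vanishes to fourth order at $\gamma=0$. I expect this upper bound to be the crux, and note that simply squaring away the radical is circular (it returns $P<0$). I would instead establish it by expanding $K$ and $E$ in their hypergeometric series and showing that every Taylor coefficient of $P$ beyond the (vanishing) constant and linear ones is nonpositive — this is the route of \cite[Appendix A]{GuillodWittwer15SIAM} — or, alternatively, by proving directly that $\gamma\mapsto\rho_+(\gamma)-E(\gamma)/K(\gamma)$ is nonnegative via its monotonicity and its value at $\gamma=0$.

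Granting $P<0$ on $(0,1)$, we obtain $H'<0$ there, so $H$ is strictly decreasing on $[0,1)$; combined with $H(0)=\frac{\pi^2}{4}$ and $H(\gamma)\to-\infty$, this yields the asserted strictly decreasing bijection onto $(-\infty,\frac{\pi^2}{4}]$, completing the proof.
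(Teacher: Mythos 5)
A preliminary but important point of comparison: the paper does not prove this lemma at all. It is quoted verbatim as a known result from \cite[Appendix A]{GuillodWittwer15SIAM} ("We recall their result as the following lemma"), so any genuine proof attempt already contains more than the paper's own text. Within your proposal, every verifiable step is correct: $H(0)=\frac{\pi^2}{4}$, $H(\gamma)\to-\infty$ as $\gamma\to1^-$, and the derivative identity
\[
\gamma(1-\gamma^2)\,H'(\gamma)=(1-\gamma^2)K^2-2(2-\gamma^2)KE+3E^2=:P(\gamma)
\]
checks out upon substituting $K'=\frac{E-(1-\gamma^2)K}{\gamma(1-\gamma^2)}$ and $E'=\frac{E-K}{\gamma}$, as do your discriminant $4(\gamma^4-\gamma^2+1)$ and the expansion $P(\gamma)=-\frac{3\pi^2}{32}\gamma^4+O(\gamma^6)$. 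So the lemma genuinely reduces to $P<0$ on $(0,1)$, which you then outsource to the series argument of \cite{GuillodWittwer15SIAM}. Strictly speaking that is the one gap in your writeup—the hardest step is cited rather than proved—but since the paper outsources the entire lemma to the same appendix, your proposal is structurally the proof that the paper is implicitly relying on, carried one level deeper.

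The more useful observation is that the step you single out as "the crux" is considerably easier than you anticipate, and can be closed with a tool the paper itself already quotes: the bound $1-\gamma^2<\frac{E(\gamma)}{K(\gamma)}<1-\frac{\gamma^2}{2}$ (Eq.~(A.2) of \cite{GuillodWittwer15SIAM}, invoked in the proof of Proposition \ref{Lemmaperiodicflow1}). In your notation $\rho=E/K$, it suffices to check $\rho_-<1-\gamma^2$ and $1-\frac{\gamma^2}{2}<\rho_+$, and both are two-line computations. Indeed, $1-\frac{\gamma^2}{2}<\rho_+$ is equivalent to $1-\frac{\gamma^2}{2}<\sqrt{\gamma^4-\gamma^2+1}$, and squaring reduces this to $\frac{\gamma^4}{4}<\gamma^4$, true for $\gamma\in(0,1)$; similarly $\rho_-<1-\gamma^2$ is equivalent to $2\gamma^2-1<\sqrt{\gamma^4-\gamma^2+1}$, which is trivial when $2\gamma^2\leq1$ and, after squaring, reduces to $3\gamma^4<3\gamma^2$ otherwise. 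Hence $\rho_-<\rho<\rho_+$, so $P<0$ and $H'<0$ on $(0,1)$, with no term-by-term analysis of hypergeometric coefficients needed. Your worry about tightness near $\gamma=0$ is resolved by noting that $E/K=1-\frac{\gamma^2}{2}-\frac{\gamma^4}{16}+O(\gamma^6)$ while $\rho_+=1-\frac{\gamma^2}{2}+\frac{\gamma^4}{8}+O(\gamma^6)$: the intermediate quantity $1-\frac{\gamma^2}{2}$ separates the two with room of order $\gamma^4$ on each side, which is exactly the fourth-order vanishing of $P$ that you detected.
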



Now we are ready to prove Proposition \ref{Lemmaperiodicflow1}.
\begin{proof}[Proof of Proposition \ref{Lemmaperiodicflow1}] 
 (i) Proof of Part (1) in Proposition \ref{Lemmaperiodicflow1}. 
First, let us express  $(e_1, e_2, e_3)$ in terms of $\alpha$ and $ \bar{\gamma}$. 
According to \eqref{periodic-1} and  \eqref{periodic-03}, one has
\begin{equation*}
   \frac{6 K^2( \bar{\gamma})}{\alpha^2} = e_1 - e_3\quad \text{and}\quad 
     \bar{\gamma}^2 = \frac{e_1 - e_2}{e_1 - e_3}.
\end{equation*}
Consequently, it holds that
\begin{equation*}
   \bar{\gamma}^2 (e_1 - e_3) =e_1 -e_2 =  6+2e_1 + e_3.  
\end{equation*}
Hence one has
\begin{equation}\label{periodic-4}
e_1 = -2 + \frac13( \bar{\gamma}^2 +1) (e_1 - e_3)= -2 + \frac{2}{\alpha^2} ( \bar{\gamma}^2 +1) K^2( \bar{\gamma}), 
\end{equation}

\begin{equation}\label{periodic-5}
    e_3=  -2 + \frac13( \bar{\gamma}^2 -2)(e_1 - e_3)= -2 + \frac{2}{\alpha^2} ( \bar{\gamma}^2 -2 ) K^2( \bar{\gamma}),
\end{equation}
and 
\begin{equation}\label{periodic-6}
    e_2= -2 - \frac13 (2 \bar{\gamma}^2 -1)(e_1 - e_3)= -2 - \frac{2}{\alpha^2}(2 \bar{\gamma}^2 -1) K^2( \bar{\gamma}). 
\end{equation}

We claim that for any given $\alpha$, $e_2$ is strictly decreasing with respect to $ \bar{\gamma}$. Indeed,  direct computations show that
\begin{align*}
   \alpha^2 \frac{d e_2}{d \bar{\gamma}}& = -8 \bar{\gamma} \cdot K^2( \bar{\gamma}) + 4(1- 2 \bar{\gamma}^2) K( \bar{\gamma})K^\prime( \bar{\gamma}) \\
    & = 4K( \bar{\gamma}) \left[ -2 \bar{\gamma} K( \bar{\gamma}) + (1-2 \bar{\gamma}^2)K^\prime( \bar{\gamma}) \right]\\
    & = 4K( \bar{\gamma}) \left[ -2 \bar{\gamma} K( \bar{\gamma}) +  (1- 2 \bar{\gamma}^2) \frac{E( \bar{\gamma})}{ \bar{\gamma} (1- \bar{\gamma}^2)} - (1-2 \bar{\gamma}^2) \frac{K( \bar{\gamma})}{ \bar{\gamma}}\right]\\
    &= \frac{4K( \bar{\gamma})}{(1- \bar{\gamma}^2)  \bar{\gamma}} \left[ -K( \bar{\gamma})(1- \bar{\gamma}^2) + (1-2 \bar{\gamma}^2) E( \bar{\gamma})\right], 
\end{align*} 
where we used the formula (cf. \cite[p.501]{Whittaker62})
\begin{equation}\nonumber
    \frac{dK( \gamma)}{d \gamma} = \frac{E( \gamma)}{ \gamma (1-  \gamma^2)} - \frac{K( \gamma)}{ \gamma}. 
\end{equation}
Note that (cf. \cite[Eq. (A.2)]{GuillodWittwer15SIAM})
\begin{equation}\nonumber
    1-  \gamma^2 < \frac{E( \gamma)}{K( \gamma)} < 1 - \frac{ \gamma^2}{2}. 
\end{equation}
One can easily prove that 
\begin{equation}\label{periodic-8}
    \frac{de_2}{d \bar{\gamma}} < 0 \quad \text{for}\,\,  \bar{\gamma} \in (0, 1). 
\end{equation}
Let 
\begin{equation}
\label{eq1312}
     \bar{\gamma}^*= \sqrt{\frac{e_1^*(\alpha)}{e_1^*(\alpha) - (-6 - e_1^*(\alpha))}}
\end{equation}
be the ratio associated with the triple of roots $(e_1^*(\alpha), 0, -6-e_1^*(\alpha))$, where $e_1^*(\alpha) $ is implicitly defined through \eqref{angle-10}.  
Note that  $(e_1^*(\alpha),0,-6-e_1^*(\alpha))$ satisfies the equations \eqref{angle},\eqref{flux} for type $(1,0)$ flows as well as the equations \eqref{periodic-01}, \eqref{periodic-02} for type $(1,1)$ flows because of
    $I_+(e_1^*(\alpha),0)=I(e_1^*(\alpha),0)$ and $ J_+(e_1^*(\alpha),0)=J(e_1^*(\alpha),0)$.
Assume that  $(e_1, e_2, e_3)$ is the triple of roots satisfying both \eqref{periodic-01} and \eqref{periodic-02},  which corresponds to a  flow of type $(1, 1)$. It holds that  
$e_2 \leq 0$ from Lemma \ref{lem:(m,m)}. Then due to \eqref{periodic-8}, 
we have $ \bar{\gamma} \geq  \bar{\gamma}^*.$ And then it follows from Lemma \ref{functionH} that $H( \bar{\gamma}) \leq H( \bar{\gamma}^*)$. This is equivalent to say  $\Phi \leq \Phi_{max}^{(1,0)}(\alpha)$ by using \eqref{periodic-3}. Therefore, when $\Phi>\Phi^{(1,0)}_{max}(\alpha)$, there is no periodic flow of type $(1,1)$ such that \eqref{periodic-01},\eqref{periodic-02} hold.

For every $\Phi < \Phi_{max}^{(1,0)}(\alpha)$,  there is a unique $ \bar{\gamma} >  \bar{\gamma}^* $ such that \eqref{periodic-3} holds, due to Lemma \ref{functionH}. Then we have the unique triple of roots $(e_1, e_2, e_3)$ given by the equations \eqref{periodic-4}, \eqref{periodic-5}, abd \eqref{periodic-6}.
Moreover, due to the monotonicity  of $e_1$ and $e_2$ with respect to $ \bar{\gamma}$, it holds that 
\begin{equation*}
e_1 > e_1^*(\alpha)>0, \ \ \ \ e_2 <  0. 
\end{equation*}
 Therefore, for each $\Phi < \Phi_{max}^{(1,0)}(\alpha)$, we can find a unique periodic flow of type $(1, 1)$ such that \eqref{periodic-01}-\eqref{periodic-02} hold.



    
     (ii) Proof of Part (2) in Proposition \ref{Lemmaperiodicflow1}.
    When $e_1 = 0$, it follows from \eqref{periodic-4} that 
\begin{equation}\label{periodic-9}
( \bar{\gamma}^2 + 1) K^2( \bar{\gamma}) = \alpha^2. 
\end{equation}
Since $( {\gamma}^2 +1)K^2( {\gamma})$ is strictly increasing and maps $[0, 1)$ to $[\frac{\pi^2}{4}, \infty)$, for fixed $\alpha\geq \frac{\pi}{2}$, there is a unique $ \bar{\gamma}=  \bar{\gamma}^*$ such that \eqref{periodic-9} holds. For this particular $ \bar{\gamma}^*$, we get the corresponding triple of roots $(0, e_2^*(\alpha), -6-e_2^*(\alpha))$, by virtue of \eqref{periodic-4}-\eqref{periodic-6}. It is noted that this particular flow is also a pure inflow. 
Let 
\begin{equation}\label{periodic-10}
    \Phi_{max}^{(1, 1)} (\alpha) := 2\int_{e_2^*(\alpha)}^0 \frac{f\, df}{\sqrt{-\frac23 f (f - e_2^*(\alpha))(f+ 6 + e_2^*(\alpha))}} \leq 0, 
    \quad
    \frac{\pi}{2}\leq \alpha<\pi. 
\end{equation}
$\Phi_{max}^{(1, 1)} (\alpha)$ is the flux of the periodic flow with the roots $(0, e_2^*(\alpha), -6- e^*_2(\alpha))$. Next, we will prove that $\Phi_{max}^{(1, 1)}(\alpha)$ is the maximum flux of type $(1, 1)$ flows for $\frac{\pi}{2}\leq \alpha<\pi$. 
Assume that $(e_1, e_2, e_3)$ is a triple of roots satisfying \eqref{periodic-01}-\eqref{periodic-02}. Recall 
    $\bar{\gamma} = \sqrt{\frac{e_1 - e_2}{e_1 - e_3}}$
and it holds that 
$ e_1 \geq 0$. On the other hand, 
it follows from \eqref{periodic-4} that $e_1$  is strictly increasing with respect to $ \bar{\gamma}$. Hence $ \bar{\gamma} \geq  \bar{\gamma}^*$.   Then Lemma \ref{functionH}  and  \eqref{periodic-3} implies that the corresponding flux $\Phi \leq \Phi_{max}^{(1,1)}(\alpha)$. Therefore, if $\Phi>\Phi^{(1,1)}_{max}(\alpha)$, there is no periodic flow of type $(1,1)$ such that \eqref{periodic-01} and \eqref{periodic-02} hold.

 Similar to the argument in the proof of Part (1) above, for every $\Phi < \Phi_{max}^{(1, 1)}(\alpha)$, there is a unique flow of type $(1, 1)$ such that \eqref{periodic-01}-\eqref{periodic-02} hold and $e_1>0, e_2<0$.

  (iii) Proof of Part (3) in Proposition \ref{Lemmaperiodicflow1}.
  The fact $\Phi^{(1,1)}_{max}(\alpha)<\frac{\pi^2}{\alpha}-4\alpha$ for $\frac{\pi}{2}\leq\alpha<\pi$ is easily seen from ${\eqref{periodic-3}}_2$ and Lemma \ref{functionH} that $H(\bar{\gamma})\leq \frac{\pi^2}{4}$.
It remains to show  $\Phi_{max}^{(1, 1)}(\alpha)$ is decreasing as a function of $\alpha$.
Assume that $0< \alpha < \beta < \frac{\pi}{2} $. Then $\Phi_{max}^{(1, 1)}(\alpha) = \Phi_{max}^{(1, 0)}(\alpha)$, and the maximum flux is attained when the second root $e_2=0$. We then have
\begin{equation*}
    \Phi_{max}^{(1, 1)}(\alpha) = 2 J_{+}(e_1^*(\alpha), 0), \ \ \ \mbox{where}\ \  I_{+}(e_1^*(\alpha), 0) =\alpha, 
\end{equation*}
\begin{equation*}
    \Phi_{max}^{(1, 1)}(\beta) = 2 J_{+}(e_1^*(\beta), 0), \ \ \ 
    \mbox{where}\ \ I_{+}(e_1^*(\beta), 0) =\beta.
\end{equation*}
According to Lemma \ref{monotone}, $I_{+}(e_1, 0)$ is strictly decreasing with respect to $e_1$, it holds that $e_1^*(\alpha)> e_1^*(\beta).$ On the other hand, as proved in Section \ref{Sectionpureoutflow}, $J_{+}(e_1, 0)$ is strictly increasing with respect to $e_1$. 
Hence $\Phi_{max}^{(1, 1)}(\beta)< \Phi_{max}^{(1,1)}(\alpha). $

Assume that $0< \alpha < \frac{\pi}{2}\leq \beta < \pi$. 
The maximum flux $\Phi_{max}^{(1, 1)} (\alpha) =\Phi_{max}^{(1, 0)} (\alpha)> 0$,  $\Phi_{max}^{(1, 1)}(\beta) \leq 0$. Hence, $\Phi_{max}^{(1,1)}(\beta)< \Phi_{max}^{(1, 1)}(\alpha). $

    Assume that  $\frac{\pi}{2} \leq \alpha < \beta < \pi$. The maximum flux is attained when the largest root $e_1 = 0$. 
  Using $( {\gamma}^2 +1)K^2( {\gamma})$ is strictly increasing again,
    there exist unique $\bar{\gamma}^*(\beta)$ and $\bar{\gamma}^*(\alpha)$ satisfying
\begin{equation}\nonumber
( \bar{\gamma}^*(\beta)^2+1)K( \bar{\gamma}^*(\beta))^2 = \beta^2\quad \text{and}\quad 
( \bar{\gamma}^*(\alpha)^2 + 1) K( \bar{\gamma}^*(\alpha))^2 = \alpha^2,
\end{equation}
respectively. 
And it holds that $ \bar{\gamma}^*(\beta) >  \bar{\gamma}^*(\alpha),$  and then one can find that $\Phi_{max}^{(1,1)}(\beta)< \Phi_{max}^{(1,1)}(\alpha)$ by using the relation \eqref{periodic-3} and Lemma \ref{functionH}.
We finish the proof of Proposition \ref{Lemmaperiodicflow1}. 
\end{proof}



\begin{proof}[Proof of Proposition \ref{pro:periodicflow2}]
 For the periodic flows with $m$ ($m\geq 2$) periods in $[-\alpha, \alpha]$, i.e., the flows of type $(m, m)$, it holds that 
\begin{equation}
\label{eq1410}
I_{m, m}(e_1, e_2) := mI(e_1, e_2) =  m\int_{e_2}^{e_1}\frac{ df }{ \sqrt{-\frac23 (f- e_1)(f-e_2) (f-e_3)}} = \alpha, 
\end{equation}
\begin{equation}
\label{eq1414}
J_{m, m}(e_1, e_2) := mJ(e_1, e_2) = m \int_{e_2}^{e_1} \frac{ f\, df }{ \sqrt{-\frac23 (f- e_1)(f-e_2) (f-e_3)}}= \frac{\Phi}{2}. 
\end{equation}
In other words, we have
\begin{equation*}
    I(e_1, e_2) = \frac{\alpha}{m}<\frac{\pi}{2}, ~ ~~~~J(e_1, e_2) = \frac{\Phi}{2m}.
\end{equation*}
Hence, the problem is reduced to the problem of type (1,1) flow with half angle $\frac{\alpha}{m}<\frac{\pi}{2}$ and flux $\frac{\Phi}{m}$. Applying Part (1) of Proposition \ref{Lemmaperiodicflow1}, we obtain the conclusion in Proposition \ref{pro:periodicflow2}. In particular, we have 
the relation  
\begin{equation*}
   \Phi_{max}^{(m, m)}(\alpha) = m \max_{\{(e_1,e_2):  I(e_1,e_2)=\frac{\alpha}{m}\}}2J(e_1, e_2)= m\Phi_{max}^{(1, 1)}\left(\frac{\alpha}{m}\right)= m \Phi_{max}^{(1, 0)}\left(\frac{\alpha}{m}\right). 
\end{equation*}
This finishes the proof of the proposition.
\end{proof}


\vspace{4mm}
\section{Pure Inflow-Type (0,1) flow}\label{Sectionpureinflow}
In this section, we consider the pure inflow or type (0,1) flow, i.e., the flow that satisfies $f(\theta) <0$ when $\theta \in (-\alpha, \alpha)$. 
The main statement of this section is the following.
\begin{pro}\label{Lemmapureinflow2}
Consider a self-similar (SS) solution of type (0,1) to the Navier-Stokes equations \eqref{SNS} with no-slip boundary condition \eqref{NoslipBC} and the flux condition \eqref{eq:flux}.
\begin{enumerate}
\item  Assume that $0< \alpha \leq \frac{\pi}{2}$. For every $\Phi <\Phi_{max}^{(0,1)}(\alpha):=0$, there exists a {unique} SS solution of type (0,1) and obviously there is no such one if $\Phi \geq 0$.
\item 
Assume that $\frac{\pi}{2} < \alpha < \pi $. Let $\Phi_{max}^{(0,1)}(\alpha)=\Phi_{max}^{(1,1)}(\alpha)<\frac{\pi^2}{\alpha}-4\alpha$, where $\Phi_{max}^{(1,1)}(\alpha)$ is given by \eqref{periodic-10} above.  For every $\Phi \leq \Phi_{max}^{(0,1)}(\alpha)$, there exists a {unique} SS solution of type (0,1).  For every $\Phi > \Phi_{max}^{(0,1)}(\alpha)$, there is no  SS solution of type (0,1). 
\end{enumerate}
\end{pro}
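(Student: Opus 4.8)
The plan is to follow the template of the periodic case in Section \ref{sectionperiodicflow}, the one genuinely new ingredient being that the angle and flux integrals now run only over the inflow arc $[e_2,0]$ and therefore produce \emph{incomplete} elliptic integrals instead of the complete ones appearing in \eqref{periodic-3-1}. First I would record the admissible root configuration, in parallel with Lemmas \ref{lem:roots} and \ref{lem:(m,m)}. A pure inflow attains a negative interior minimum at a turning point, which must be a real root of $Q$; feeding this together with the sign pattern of $Q$ in \eqref{eq:f'square} and the trace relation \eqref{roots} forces three real roots $e_3<e_2<0\le e_1$ with $e_2=\min f$, and $e_1\ge 0$ is forced because $f$ must climb back to the boundary value $0$ while staying inside the oscillation band $[e_2,e_1]$. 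Note that $\Phi=\int_{-\alpha}^{\alpha}f\,d\theta<0$ automatically, which already disposes of the case $\Phi\ge 0$. Using the symmetry of $f$, the two conditions reduce to
\begin{equation*}
\alpha=\int_{e_2}^{0}\frac{df}{\sqrt{Q(f)}},\qquad \frac{\Phi}{2}=\int_{e_2}^{0}\frac{f\,df}{\sqrt{Q(f)}},\qquad e_3=-6-e_1-e_2.
\end{equation*}

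Then I would apply the substitution $t=\sqrt{(e_1-f)/(e_1-e_2)}$ of \eqref{periodic-3-1}. It maps $f=e_2$ to $t=1$ and $f=0$ to $t_0:=\sqrt{e_1/(e_1-e_2)}\in[0,1)$, turning the two conditions into incomplete elliptic integrals of modulus $\bar{\gamma}=\sqrt{(e_1-e_2)/(e_1-e_3)}$ and upper amplitude $\phi_0=\arcsin t_0$ (the complete integrals $K,E$ of \eqref{defKE} minus their incomplete counterparts $F(\phi_0,\bar{\gamma}),E(\phi_0,\bar{\gamma})$). The decisive point is the degenerate endpoint $e_1=0$, i.e. $t_0=\phi_0=0$: there the incomplete integrals become complete, and the inflow coincides with the type $(1,1)$ flow whose largest root vanishes — exactly the flow already analyzed in Part (2) of Proposition \ref{Lemmaperiodicflow1}. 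By \eqref{periodic-9} such an $e_1=0$ inflow is governed by $(\bar{\gamma}^2+1)K^2(\bar{\gamma})=\alpha^2$, which has a nontrivial root in $(0,1)$ precisely when $\alpha>\frac{\pi}{2}$; in that regime its flux is $\Phi_{max}^{(1,1)}(\alpha)$ from \eqref{periodic-10}, whereas for $\alpha\le\frac{\pi}{2}$ this endpoint is only the trivial, zero-flux limit. This dichotomy is what separates Parts (1) and (2).

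The heart of the proof is a one-parameter continuation at fixed $\alpha$: solving the angle equation for $e_2=e_2(e_1)$ (well defined and continuous once one checks that the incomplete angle integral is strictly monotone in $e_2$), I would show that $\Phi(e_1)$ is continuous and strictly decreasing, with $\Phi(e_1)\to-\infty$ as $e_1\to\infty$ (where $e_2\to-\infty$ and the integrals are estimated directly), while the top endpoint equals $0^-$ as $e_1\to 0^+$ when $\alpha\le\frac{\pi}{2}$ and equals $\Phi_{max}^{(1,1)}(\alpha)$ at $e_1=0$ when $\alpha>\frac{\pi}{2}$. The value $0^-$ is justified by the vanishing-amplitude limit, whose linearization $\psi''+4\psi=\text{const}$ admits the one-signed profile $\psi=\cos2\alpha-\cos2\theta$ only for $\alpha\le\frac{\pi}{2}$ (for $\alpha>\frac{\pi}{2}$ every small solution changes sign, so no small pure inflow exists). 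Existence for each admissible $\Phi$ then follows from the intermediate value theorem and uniqueness from strict monotonicity; alternatively, uniqueness may be obtained by the comparison-of-integrands contradiction used for the pure outflow in the proof of Proposition \ref{Lemmapureoutflow2}. Finally $\Phi_{max}^{(0,1)}(\alpha)=\Phi_{max}^{(1,1)}(\alpha)<\frac{\pi^2}{\alpha}-4\alpha$ for $\alpha>\frac{\pi}{2}$ is inherited directly from Part (3) of Proposition \ref{Lemmaperiodicflow1}.

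I expect the main obstacle to be the strict monotonicity of $\Phi(e_1)$ along this continuation: both the modulus $\bar{\gamma}$ and the amplitude $\phi_0$ move simultaneously, so the clean complete-integral monotonicity supplied by Lemma \ref{functionH} no longer suffices, and one must instead establish sign-definite derivative estimates for the incomplete elliptic integrals $F(\phi_0,\bar{\gamma})$ and $E(\phi_0,\bar{\gamma})$ — precisely the delicate incomplete-elliptic-function analysis flagged in the introduction.
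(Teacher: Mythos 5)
Your skeleton agrees with the paper's in its broad strokes: the same root configuration ($e_1\ge 0$, $e_3<e_2<0$), the reduction of the angle and flux conditions to the integrals \eqref{angle2}--\eqref{flux2} over the inflow arc, the identification of the degenerate endpoint $e_1=0$ with the type $(1,1)$ flow of Part (2) of Proposition \ref{Lemmaperiodicflow1} (which is exactly where the dichotomy $\alpha\le\frac{\pi}{2}$ versus $\alpha>\frac{\pi}{2}$ and the identity $\Phi^{(0,1)}_{max}(\alpha)=\Phi^{(1,1)}_{max}(\alpha)$ of \eqref{periodic-10} come from), and the intermediate value theorem combined with the limit $\Phi\to-\infty$ at the far end of the continuation curve. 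Parametrizing the curve by $e_1$ rather than by $e_2$ is an inessential difference, though note the paper's choice is the cheaper one: $I_-$ is \emph{manifestly} decreasing in $e_1$ (both factors $e_1-f$ and $f-e_3=f+6+e_1+e_2$ increase), which is what makes Lemma \ref{Lemmapureinflow1} elementary, whereas the monotonicity of $I_-$ in $e_2$ at fixed $e_1$ that your map $e_1\mapsto e_2(e_1)$ requires is true but needs a separate pointwise computation (substitute $f=e_2g$ and differentiate in $-e_2$); in your sketch it is asserted, not proved. Also, the paper replaces your linearization heuristic for the $0^-$ endpoint by the one-line bound $|J_-(e_1,e_2)|\le|e_2|\,I_-(e_1,e_2)=|e_2|\alpha$. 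Up to these points, your argument would deliver existence in both parts and non-existence for $\Phi\ge0$ in Part (1).

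The genuine gap is that everything which makes the proposition sharp --- uniqueness in both parts, and the non-existence of pure inflows with $\Phi>\Phi^{(0,1)}_{max}(\alpha)$ in Part (2) --- rests on the strict monotonicity of the flux along the continuation curve, and this is precisely the step you defer to unspecified ``sign-definite derivative estimates for the incomplete elliptic integrals.'' Identifying the obstacle is not overcoming it, and the proposal contains no indication of how those estimates would go. The paper closes this in two different ways, neither of which uses incomplete-elliptic special-function theory. For the upper bound in Part (2) it uses a comparison argument that bypasses monotonicity along the curve entirely: a pure inflow with roots $(e_1(e_2),e_2,e_3)$ is the negative arc of a type $(1,1)$ flow in a strictly larger sector of half-angle $\beta=I(e_1(e_2),e_2)>\alpha$, whence $\Phi=2J_-\le 2J\le\Phi^{(1,1)}_{max}(\beta)<\Phi^{(1,1)}_{max}(\alpha)$ by the monotonicity of $\Phi^{(1,1)}_{max}$ in the angle (Part (3) of Proposition \ref{Lemmaperiodicflow1}). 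For uniqueness it differentiates the constraint implicitly along the curve,
\begin{equation*}
\frac{d}{de_2}J_-(e_1(e_2),e_2)=\frac{\partial J_-}{\partial e_1}\,\frac{\partial I_-/\partial e_2}{-\,\partial I_-/\partial e_1}+\frac{\partial J_-}{\partial e_2},
\end{equation*}
and proves positivity of the Jacobian-type combination $\frac{\partial J_-}{\partial e_1}\frac{\partial I_-}{\partial e_2}-\frac{\partial J_-}{\partial e_2}\frac{\partial I_-}{\partial e_1}$ by a H\"older (correlation) inequality applied directly to the explicit integrals defining $I_-$ and $J_-$, after discarding the favorable term $\frac{J_-}{e_2}>0$. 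You would need to supply this computation, or an argument of comparable strength, before your proposal becomes a proof.
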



Following similar arguments in Lemma \ref{lem:(m,m)}, one can show that $e_1 \geq 0$ and $e_2 < 0$ for the pure inflow.  Then for the pure inflow, the corresponding plot for $Q(f)$ is given in  Figure \ref{three real roots}.
For the pure inflow, the angle condition and the flux condition can be rewritten as
\begin{align}\label{angle2}
  I_{-}(e_1, e_2) = \int_{e_2}^0 \frac{df}{\sqrt{-\frac23 (f- e_1)(f-e_2)(f-e_3)}}  = \alpha
\end{align}
and
\begin{align}\label{flux2}
   J_{-}(e_1, e_2)= \int_{e_2}^0 \frac{f \, df }{\sqrt{-\frac23 (f- e_1)(f-e_2)(f-e_3)}}  = \frac{\Phi}{2},
\end{align}
respectively. 
We first give a lemma for the possible choices of $e_1$ and $e_2$ under the angle condition \eqref{angle2}.
\begin{lemma}\label{Lemmapureinflow1}
\begin{enumerate}
\item 
Assume that $0< \alpha \leq  \frac{\pi}{2}$.  For every $e_2<0$, there exists a unique $e_1>0$, such that \eqref{angle2} holds.   
\item 
Assume that $\alpha >  \frac{\pi}{2}$. There exists some $-3<e_2^*(\alpha)<0$, such that for every $e_2 \leq e_2^*(\alpha)$, there exists a unique $e_1\geq 0$, such that \eqref{angle2}  holds. For every $e_2 > e_2^*(\alpha)$, there does not exist $e_1\geq 0$, such that \eqref{angle2} holds. 
\end{enumerate}
\end{lemma}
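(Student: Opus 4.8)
The plan is to study, for each fixed $e_2<0$, the angle integral $I_{-}(e_1,e_2)$ in \eqref{angle2} as a function of the largest root $e_1$ alone, substituting $e_3=-6-e_1-e_2$. First I would record the admissible range of $e_1$: we need $e_1\geq 0$ together with the ordering $e_3<e_2$, which is equivalent to $e_1>-6-2e_2$, so the admissible set is $e_1\in(\max\{0,-6-2e_2\},\infty)$, where the endpoint $e_1=0$ is adjoined when $e_2\in(-3,0)$. Throughout this range the radicand $-\frac23(f-e_1)(f-e_2)(f-e_3)$ is positive for $f\in(e_2,0)$, exactly as in the sign analysis preceding Lemma \ref{lem:(m,m)}, so $I_{-}$ is well defined.

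The key monotonicity step is to rewrite the radicand as $\frac23(f-e_2)\,g(e_1)$, where $g(e_1):=(e_1-f)(f+6+e_1+e_2)$ and the factor $f-e_2>0$ does not depend on $e_1$. A direct differentiation gives $\partial_{e_1}g=2e_1+6+e_2$, which is independent of $f$; evaluating it at the left endpoint of the admissible interval shows $\partial_{e_1}g>0$ there, and since $\partial_{e_1}g$ is increasing in $e_1$ it stays positive throughout. Hence $g$ increases pointwise in $f$, and because the limits $e_2$ and $0$ are independent of $e_1$, it follows that \textbf{$I_{-}(e_1,e_2)$ is strictly decreasing in $e_1$} on the admissible interval. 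This is the same pointwise-comparison mechanism used in Lemma \ref{nonexistence} and Lemma \ref{monotone}.

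Next I would identify the two endpoint limits. As $e_1\to\infty$ one has $e_3\to-\infty$, and a rescaling of the integrand shows $I_{-}(e_1,e_2)\to 0$. At the left endpoint there are two regimes. If $e_2\leq -3$, the endpoint is $e_1=-6-2e_2$, where $e_3\to e_2$ becomes a double root, the integrand develops a non-integrable $1/(f-e_2)$ singularity, and $I_{-}\to+\infty$; thus the range of $I_{-}$ over admissible $e_1$ is all of $(0,\infty)$. If $e_2\in(-3,0)$, the endpoint is $e_1=0$ and $M(e_2):=I_{-}(0,e_2)$ is finite; the substitution $f=e_2 s$ yields $M(e_2)=\int_0^1\bigl[\tfrac23 s(1-s)\,(6+e_2(1+s))\bigr]^{-1/2}\,ds$, from which $M$ is strictly decreasing in $e_2$ with $M(0^-)=\tfrac{\pi}{2}$ and $M(-3^+)=+\infty$; hence $M$ is a strictly decreasing bijection from $(-3,0)$ onto $(\tfrac{\pi}{2},\infty)$, and the range of $I_{-}$ over $e_1\geq 0$ is $(0,M(e_2)]$. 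Assembling this with the intermediate value theorem and strict monotonicity proves both parts: for Part (1) with $0<\alpha\leq\tfrac{\pi}{2}$, either the range is $(0,\infty)\ni\alpha$ (when $e_2\leq -3$) or $M(e_2)>\tfrac{\pi}{2}\geq\alpha$ (when $e_2\in(-3,0)$), so in each case a unique $e_1>0$ exists; for Part (2) with $\alpha>\tfrac{\pi}{2}$, define $e_2^*(\alpha)\in(-3,0)$ by $M(e_2^*(\alpha))=\alpha$, and monotonicity of $M$ gives $\alpha\leq M(e_2)$ iff $e_2\leq e_2^*(\alpha)$, so a unique $e_1\geq 0$ exists precisely when $e_2\leq e_2^*(\alpha)$ (with $e_1=0$ exactly at $e_2=e_2^*(\alpha)$) and none when $e_2>e_2^*(\alpha)$.

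The main obstacle will be the endpoint analysis of $M(e_2)$ — establishing its strict monotonicity and the boundary values $\tfrac{\pi}{2}$ and $+\infty$ — together with verifying $\partial_{e_1}g>0$ on the \emph{entire} admissible interval. The latter sign is delicate: for $e_2<-6$ one has $\partial_{e_1}g=6+e_2<0$ at $e_1=0$, so the argument cannot simply test $e_1=0$; the point is that such $e_1=0$ is not admissible for these $e_2$, and at the true left endpoint $e_1=-6-2e_2$ one computes $\partial_{e_1}g=3(-2-e_2)>0$, after which monotonicity in $e_1$ propagates positivity to the right.
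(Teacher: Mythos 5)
Your proposal is correct and follows essentially the same route as the paper's proof: strict monotonicity of $I_{-}$ in $e_1$ obtained from the identity $(e_1-f)(f+6+e_1+e_2)$ having $e_1$-derivative $2e_1+6+e_2>0$ on the admissible range (the paper phrases this via the vertex $e_1=-3-\tfrac12 e_2$ of the same quadratic), a case split at $e_2=-3$, the boundary analysis of $I_{-}(0,e_2)$ as a strictly decreasing function of $e_2$ with limits $\tfrac{\pi}{2}$ and $+\infty$, the blow-up at the double root $e_3=e_2$ when $e_2\le -3$, and the intermediate value theorem. Your explicit check that positivity of $\partial_{e_1}g$ must be verified at the true left endpoint $e_1=-6-2e_2$ (not at $e_1=0$) when $e_2<-6$ is a point the paper handles implicitly, and it is handled correctly in your argument.
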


\begin{remark}
  For every $\alpha > \frac{\pi}{2}$,   $e_2^*(\alpha)$ in Lemma \ref{Lemmapureinflow1} is exactly the critical number such that $I(0, e_2^*(\alpha)) = \alpha$ for flows of type $(1, 1)$. More precisely, $e_2^*(\alpha)$ satisfies 
  \begin{equation*}
 e_2^*(\alpha) = -2 - \frac{2}{\alpha^2} \left[ 2(\gamma^*)^2 -1\right] K^2(\gamma^*), 
  \end{equation*}
  with $\gamma^*$ satisfying \eqref{periodic-9} and $K$ defined in \eqref{defKE}. 
 \end{remark}

\begin{proof}[Proof of Lemma \ref{Lemmapureinflow1}] 
We have
\begin{align}\nonumber
    (e_1 -f ) (f - e_3) = (e_1 - f) (f+ 6 + e_1 + e_2) = 
    e_1^2 + (6+ e_2) e_1 - f(f+6 + e_2).
\end{align}
Then for every fixed $e_2 <0$,  when $e_1 = -3 - \frac12 e_2$, the  value for 
$(e_1 -f)(f- e_2)(f-e_3)$ attains the minimum. On the other hand, to make sure $e_2 \geq e_3$ and $e_1 + e_2 + e_3 = -6$, it must hold that 
$e_1 \geq - 6 - 2e_2. $

We consider the following two cases separately:
(1) $-3 \leq e_2 <0$; (2) $e_2<-3$. 

{\bf{Case 1}}.$-3 \leq e_2 <0$.
In this case, the admissible value of $e_1$ belongs to $[0,+\infty)$. For every fixed $e_2$, the integral $I_{-}(e_1, e_2)$ is strictly decreasing with respect to $e_1$. As $e_1$ goes to $\infty$, $I_{-}(e_1, e_2)$ goes to $0$.  For $e_1 = 0$, one has
\begin{align}\nonumber
  I_{-}(0, e_2) &=   \int_{e_2}^0 \frac{df}{\sqrt{\frac23 (-f) (f-e_2) (f+ 6+ e_2) }} 
  = \int_{0}^1 \frac{ dg}{\sqrt{\frac23 g(1-g)(e_2 g + 6 + e_2) }}. \nonumber
\end{align}
Hence
$I_{-}(0, e_2)$ is strictly decreasing with respect to $e_2$. Moreover, $I_{-}(0, e_2)$ goes to $\frac{\pi}{2}$ as $e_2$ goes to $0$ by virtue of \eqref{eq:maxi0.5pi}, and $I_{-}(0, e_2)$ tends to $+\infty$ as $e_2$ goes to $-3$. 

Therefore, if $0<\alpha \leq  \frac{\pi}{2}$, then for every $-3\leq e_2< 0$ we have $I_-(0, e_2) > \alpha. $ By the intermediate value theorem, there exists a unique $e_1 $ such that $I_{-}(e_1, e_2) = \alpha. $  However, if $\alpha > \frac{\pi}{2},$ there exists a unique $-3<e_2^*(\alpha)<0$, such that $I_{-}(0, e^*_2(\alpha) ) = \alpha$. For every $-3\leq e_2 < e_2^*(\alpha)$, it holds that $I_{-}(0, e_2) >\alpha$ and by the intermediate value theorem again, there exists a unique $e_1$ such that 
$I_{-}(e_1, e_2)= \alpha.$ If $e_2 > e_2^*(\alpha)$, then $I_{-}(0, e_2) < \alpha$, and hence there is no $e_1$, such that $I_{-}(e_1, e_2) =\alpha$. 

\vspace{2mm}
{\bf{Case 2}}. $e_2<-3$. For this case,  the admissible value of $e_1$ belongs to $[-6-2e_2,+\infty)$.

For each fixed $e_2$, the integral $I_{-}(e_1, e_2)$ is strictly decreasing with respect to $e_1$.  If $e_1= -6 -2e_2$, then one has $e_3 = e_2$. Furthermore, the integral $I_{-}(e_1, e_2)$ attains its maximum, which is $\infty$. For each fixed $e_2$, $I_{-}(e_1, e_2)$ decreases to $0$ as $e_1$ goes to $\infty$.  Hence, by the intermediate value theorem, there exists a unique $e_1$ such that $I_{-}(e_1, e_2) = \alpha$. 

Combining the above two cases, we finish the proof for Lemma \ref{Lemmapureinflow1}.
\end{proof}

Now we are ready to prove Proposition \ref{Lemmapureinflow2}.
\begin{proof}[Proof of Proposition \ref{Lemmapureinflow2}]
We prove the existence part of  Proposition \ref{Lemmapureinflow2} first and leave the proof for the uniqueness as the last part.

{\bf Step 1.} {\it Proof of Part (1) in Proposition \ref{Lemmapureinflow2} without uniqueness}. 
 As we will see, the case for $0< \alpha \leq \frac{\pi}{2}$ is easier, so we start from this case. We only need to prove that for every flux $\Phi<0$, there exists $(e_1, e_2)$ such that  both \eqref{angle2}  and \eqref{flux2} hold. 

As proved in Lemma \ref{Lemmapureinflow1}, for every $e_2<0$, there exists a unique $e_1$, which we denote by $e_1(e_2)$, such that $I_{-}(e_1(e_2), e_2) = \alpha$. 
First, we note
\begin{align}
\label{eq:j-}
  \left|  J_{-}(e_1, e_2) \right| \leq |e_2| I_{-}(e_1, e_2) = |e_2| \alpha. 
\end{align}
As $e_2$ can be chosen arbitrarily close to $0$, $|J_{-}(e_1, e_2)|$ can also be arbitrarily close to $0$. 

Second, if $e_2<-6$, then $e_1(e_2) \geq -6 -2e_2 > -e_2. $ Let $e_3 = -6-e_1(e_2) - e_2$. 
\begin{align} \nonumber
    \alpha & = \int_{e_2}^0 \frac{df}{\sqrt{\frac23 (e_1-f) (f-e_2) (f-e_3)}} \\ \nonumber
    & \leq  \int_{e_2}^0 \frac{df}{\sqrt{\frac23 (-e_2) (f-e_2) (f-e_3)}}  \\ \nonumber
    & = \int_0^1 \frac{dg}{\sqrt{\frac23 (1- g) (e_2 g - e_3)}}\leq \int_0^1 \frac{dg}{\sqrt{\frac23 (1- g) (e_2  - e_3)}} \leq \frac{\sqrt{6}}{\sqrt{e_2 - e_3}}.  \nonumber
\end{align}
It implies that 
$
0<  e_2 - e_3 \leq \frac{6}{\alpha^2},
$
and thus $e_1(e_2) = -6 - e_2 - e_3 = -2e_2 + o(e_2)$, as $e_2 \to -\infty$. Then for large enough $|e_2|$, one has
\begin{align*}
-J_{-}(e_1(e_2), e_2) & \geq \int^{\frac{e_2}{4}}_{\frac{e_2}{2}} \frac{-f \, df }{\sqrt{-\frac23 (f- e_1(e_2))  (f-e_2) (f- e_3)}} \\
& \geq  c \int^{\frac{e_2}{4}}_{\frac{e_2}{2}} \frac{-e_2 \, df }{\sqrt{(-e_2)\cdot   (-e_2) \cdot  (-e_2) }} \geq c \sqrt{-e_2}.
\end{align*}
This implies that $J_{-}(e_1(e_2), e_2) $ goes to $-\infty$ as $e_2$ goes to $-\infty$. 

Combining the above analysis, by the intermediate value theorem, we conclude that {for every $(\alpha, \Phi)$ satisfying $\alpha \in (0, \frac{\pi}{2}]$ and  $\Phi < 0$, there exists a pure inflow. }

\vspace{2mm}

{\bf Step 2.} {\it Proof of Part (2) in Proposition \ref{Lemmapureinflow2} without uniqueness.}
Next, we consider the case that $\alpha > \frac{\pi}{2}$. For this case, as we discussed, there exists an $e_2^*(\alpha)\in (-3,0)$ such that $I_{-} (0, e_2^*(\alpha)) = \alpha$. In fact, $(0, e_2^*(\alpha), -6-e_2^*(\alpha))$ gives a flow with maximum flux $\Phi_{max}^{(1, 1)}(\alpha)$ in the class of flows of type $(1,1)$, according to the proof of Part (2) in Proposition \ref{Lemmaperiodicflow1}.  We shall prove that the particular flow also gives the maximum flux in the class of pure inflows.

According to Lemma \ref{Lemmapureinflow1}, for every $e_2 < e_2^*(\alpha)$, there exists a unique $e_1>0$, which we denote by $e_1(e_2)$, such that $I_{-}(e_1(e_2), e_2) = \alpha$. The pure inflow corresponding to $(e_1(e_2), e_2, -6-e_1(e_2)-e_2)$ can also be considered  as the negative part of a type $(1, 1)$ flow in the interval $(-\beta, \beta)$, $\beta> \alpha$, i.e., 
\begin{equation*}
    I(e_1(e_2), e_2) = \beta >\alpha.
\end{equation*}
Let $\Phi=2 J_-(e_1(e_2),e_2).$
It follows from Part (3) of Proposition  \ref{Lemmaperiodicflow1} that one has, 
\begin{equation}\nonumber
    \Phi_{max}^{(1,1)}(\beta) < \Phi_{max}^{(1, 1)}(\alpha) = 2J (0, e_2^*(\alpha)). 
\end{equation}
Hence it holds that  
\begin{equation}\nonumber
    \Phi = 2 J_{-}(e_1(e_2),e_2) <\Phi_{max}^{(1, 1)}(\beta) < \Phi_{max}^{(1, 1)}(\alpha). 
\end{equation}
Moreover, as proved above, it holds that 
\begin{equation*}
    \lim_{e_2 \to -\infty} J_{-}(e_1(e_2), e_2) = -\infty.
\end{equation*}
Therefore, for $\alpha\in (\frac{\pi}{2}, \pi)$, the maximum flux for the pure inflow is attained when $e_1 =0$. And for every $\Phi \leq  \Phi_{max}^{(1, 1)}(\alpha)$, there is a pure inflow, while for every $\Phi>\Phi^{(1,1)}_{max}(\alpha)$, there is no pure inflow.

{\bf Step 3.} {\it Uniqueness.}  Now we are in position to  prove the uniqueness of type (0,1) flow (pure inflow). As discussed above, if $0< \alpha \leq \frac{\pi}{2}$ then for every $e_2<0$, and if $\frac{\pi}{2} < \alpha< \pi$ then for every $e_2<e_2^*(\alpha)$, there is a unique $e_1(e_2)$ such that $I_{-}(e_1(e_2), e_2) = \alpha$. Hence $J_-(e_1,e_2)=J_{-}(e_1(e_2), e_2)$, which we claim is strictly increasing with respect to $e_2$. 
The straightforward computations yield
\begin{equation*}
    \frac{\partial I_{-}}{\partial e_1} = \int_{e_2}^0 \frac{(-\frac12)\cdot  (6+2e_1 +e_2)\, df }{\sqrt{-\frac23(f-e_2) (f-e_1)^3 (f-e_3)^3}} < 0, 
\end{equation*}
\begin{equation*}
    \frac{\partial I_{-}}{\partial e_2} =
    \int_{e_2}^0 \frac{(-\frac12) \cdot (6e_1 + e_1^2 +e_2f+f^2) \, df }{(-e_2) \sqrt{-\frac23(f-e_2) (f-e_1)^3 (f-e_3)^3}}, 
 \end{equation*}
\begin{equation*}
    \frac{\partial J_{-}}{\partial e_1} = \int_{e_2}^0 \frac{(-\frac12) f \cdot  (6+2e_1 +e_2)\, df }{\sqrt{-\frac23(f-e_2) (f-e_1)^3 (f-e_3)^3}} >0,  
\end{equation*}
and
\begin{equation*}
    \frac{\partial J_{-}}{\partial e_2} = \frac{J_{-}}{e_2} + \int_{e_2}^0  \frac{(-\frac12) f \cdot  (6e_1 + e_1^2 + e_2f + f^2)\, df }{(-e_2)\sqrt{-\frac23(f-e_2) (f-e_1)^3 (f-e_3)^3}}. 
\end{equation*}
Then by chain rules, one has
\begin{equation*}
    \frac{d J_{-}  }{d e_2}(e_1(e_2), e_2) = 
    \frac{\partial J_{-}}{\partial e_1} \frac{de_1}{de_2}  + \frac{\partial J_{-}}{\partial e_2} 
    =   \frac{\partial J_{-}}{\partial e_1} \frac{\partial I_{-} / \partial e_2}{- \partial I_{-}/\partial e_1} + \frac{\partial J_{-}}{\partial e_2} .     
\end{equation*}
Note that $\frac{J_{-}}{e_2}>0$, it holds that
\begin{align*}
        & \frac{d J_{-}  }{d e_2} (e_1(e_2), e_2)\cdot \frac{-\partial I_{-}}{\partial e_1} = \frac{\partial J_{-}}{\partial e_1} \frac{\partial I_{-}}{\partial e_2}  - \frac{\partial J_{-}}{\partial e_2}\frac{ \partial I_{-}}{\partial e_1}\\
    > 
    &  \int_{e_2}^0 \frac{(-\frac12) f \cdot  (6+2e_1 +e_2)\, df }{\sqrt{-\frac23(f-e_2) (f-e_1)^3 (f-e_3)^3}} \cdot \int_{e_2}^0 \frac{(-\frac12) \cdot (6e_1 + e_1^2 +e_2f+f^2) \, df }{(-e_2) \sqrt{-\frac23(f-e_2) (f-e_1)^3 (f-e_3)^3}}\\
   &+  \int_{e_2}^0  \frac{(-\frac12) f \cdot  (6e_1 + e_1^2 + e_2f + f^2)\, df }{(-e_2)\sqrt{-\frac23(f-e_2) (f-e_1)^3 (f-e_3)^3}} \cdot \int_{e_2}^0 \frac{\frac12 \cdot  (6+2e_1 +e_2)\, df }{\sqrt{-\frac23(f-e_2) (f-e_1)^3 (f-e_3)^3}}. \\
 \end{align*}
 Let $ A(f, e_1, e_2) = \sqrt{-\frac23(f-e_2) (f-e_1)^3 (f-e_3)^3} $. By H\"older inequality, 
 \begin{align*}  
& \frac{d J_{-}  }{d e_2} (e_1(e_2), e_2) \cdot \frac{-\partial I_{-}}{\partial e_1} \\
 > & \frac{(6+2e_1 +e_2)}{-4e_2} \left[ 
    \int_{e_2}^0 \frac{(-f)\cdot (e_2 f +f^2) \, df }{A(f, e_1, e_2)} \cdot \int_{e_2}^{0} \frac{df}{A(f, e_1, e_2)} \right. \\
    & \ \ \ \ \ \ \ \ \ \ \ \ \ \ \ \ \ \ \ \ \ \ \ - \left. \int_{e_2}^0 \frac{(-f) \, df}{A(f, e_1, e_2)} \int_{e_2}^0 \frac{(e_2 f + f^2) \, df}{A(f, e_1, e_2)} 
    \right] \\
    =  &  \frac{6+2e_1 +e_2}{4} \left[ \int_{-e_2}^0 \frac{f^2 \, df}{A(f, e_1, e_2)} \int_{-e_2}^0 \frac{df}{A(f, e_1, e_2)} - \int_{e_2}^0 \frac{(-f)\, df}{A(f, e_1, e_2)} \int_{e_2}^0 \frac{(-f) \, df}{A(f, e_1, e_2)}    \right]\\
    &+ \frac{(6+2e_1 +e_2)}{-4e_2}\left[ \int_{e_2}^0 \frac{(-f)^3 \, df}{A(f, e_1, e_2)}\int_{e_2}^0 \frac{df}{A(f, e_1, e_2)}   - \int_{e_2}^0 \frac{(-f)\, df}{A(f, e_1, e_2)} \int_{e_2}^0 \frac{f^2 \, df}{A(f, e_1, e_2)} \right] 
    \\
    \geq& 0
    . 
 \end{align*}
Thus $J_{-}(e_1(e_2), e_2)$ is strictly increasing with respect to $e_2$. This implies the uniqueness of the flow.

\end{proof}

\section{Flows of other types}\label{sec:Flows of other types}
In this section, the existence and uniqueness of the flows of other types, i.e., types of $(m+1, m)$ and $(m, m+1)$, $m\geq 1$, are investigated.  In particular, we prove the non-uniqueness of the type $(m, m+1)$ flows. 
First, following similar arguments in Lemma \ref{lem:(m,m)}, one can show that $e_1 \geq 0$ and $e_2 < 0$ for type $(m+1, m)$ and $(m, m+1)$ flows, $m\geq 1$, and that the corresponding picture for $Q(f)$ is also given in  Figure \ref{three real roots}.
\subsection{Type $(m, m+1)$ flows}\label{sectiontype12}
In this subsection, we consider the flows of type $(m, m+1)$ with $m\geq 1$. The main statement of this subsection is as follows:
\begin{pro}\label{Lemmatype12-5}   
Consider a self-similar (SS) solution of type $(m,m+1)$ to \eqref{SNS} with no-slip boundary condition \eqref{NoslipBC} and the flux condition \eqref{eq:flux}.
Assume that $\alpha \in (0, \pi)$ and  $m \in \mathbb{N}$, $m\geq 1$. 
    \begin{enumerate}
        \item There exists a maximum flux  $\Phi_{max}^{(m, m+1)}(\alpha)$, such that 
        \begin{itemize}
            \item if $\Phi < \Phi_{max}^{(m, m+1)}(\alpha)$, then there exists an SS solution of type $(m, m+1)$; 

            \item if $\Phi >\Phi_{max}^{(m, m+1)}(\alpha)$, there exists no SS solution of type $(m, m+1)$. 
        \end{itemize}

        \item It holds that  $\Phi_{max}^{(m, m)}(\alpha) < \Phi_{max}^{(m, m+1)}(\alpha) \leq  \frac{m}{m+1} \Phi_{max}^{(m+1, m+1)}(\alpha)$.  Moreover, assume $0<\alpha<\frac{\pi}{2}$ for $m=1$ and $0<\alpha<\pi$ for $m\geq2$. Then
        for every $\Phi_{max}^{(m, m)}(\alpha) < \Phi < \Phi_{max}^{(m, m+1)} (\alpha)$, there exist at least two SS solutions of type $(m, m+1)$. 
    \end{enumerate}
\end{pro}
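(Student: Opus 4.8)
The plan is to reduce the type $(m,m+1)$ problem to a one-parameter shooting family and to read off all assertions from the global profile of the flux along that family. By the root analysis of Lemma \ref{lem:(m,m)} (adapted as indicated at the start of this section), a type $(m,m+1)$ solution has three real roots with $e_1\ge 0>e_2>e_3$, and its graph consists of $m$ congruent outflow bumps (each of half-width $I_+(e_1,e_2)$ and half-flux $J_+(e_1,e_2)$ as in \eqref{angle}--\eqref{flux}) interlaced with $m+1$ congruent inflow bumps (governed by $I_-,J_-$ as in \eqref{angle2}--\eqref{flux2}), the two outermost bumps being inflows. Summing the arc lengths and the fluxes of all bumps gives the coupled system
\[
\alpha = mI(e_1,e_2)+I_-(e_1,e_2), \qquad \frac{\Phi}{2}=mJ(e_1,e_2)+J_-(e_1,e_2),
\]
where $I=I_++I_-$ and $J=J_++J_-$ are the half-period integrals \eqref{periodic-01}--\eqref{periodic-02}. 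Thus a type $(m,m+1)$ flow is exactly $m$ full periods of the elliptic profile together with one extra inflow bump, and the problem becomes: for fixed $\alpha$, analyze $\Phi$ as the roots vary subject to the angle constraint.

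First I would fix $\alpha$ and use $e_2\in(-\infty,0)$ as the shooting parameter. As in Lemma \ref{Lemmapureinflow1}, for each admissible $e_2$ the map $e_1\mapsto mI+I_-$ is strictly decreasing and surjects onto an interval containing $\alpha$ precisely when small inflow bumps are allowed, i.e.\ when $\alpha/m<\tfrac{\pi}{2}$; this is exactly where the hypotheses $\alpha<\tfrac{\pi}{2}$ (for $m=1$) and $\alpha<\pi$ (for $m\ge 2$) enter. This determines $e_1=e_1(e_2)$ uniquely, hence a continuous family with flux $\Phi(e_2):=2\big(mJ+J_-\big)$. I would then establish the two boundary limits: $\Phi(e_2)\to-\infty$ as $e_2\to-\infty$, by the inflow-domination estimate of Step 2 in the proof of Proposition \ref{Lemmapureinflow2}; and $\Phi(e_2)\to\Phi_{max}^{(m,m)}(\alpha)$ as $e_2\to 0^-$, since the inflow bumps collapse while the outflow bumps converge to the type-$(m,m)$ maximizer with $I_+=\alpha/m$, whose flux is $m\Phi_{max}^{(1,0)}(\alpha/m)=\Phi_{max}^{(m,m)}(\alpha)$ by Proposition \ref{pro:periodicflow2}. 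The angle constraint also keeps $e_1$ bounded (if $e_1\to\infty$ then $I_+,I_-\to 0$, contradicting $\alpha>0$ fixed), so $J_+$ is bounded and $\Phi_{max}^{(m,m+1)}(\alpha):=\sup_{e_2}\Phi(e_2)$ is finite and attained at some interior $e_2^{\dagger}$. Existence for $\Phi<\Phi_{max}^{(m,m+1)}$ and non-existence for $\Phi>\Phi_{max}^{(m,m+1)}$ then follow from the intermediate value theorem, proving Part (1).

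For Part (2), the lower bound $\Phi_{max}^{(m,m)}(\alpha)<\Phi_{max}^{(m,m+1)}(\alpha)$ amounts to showing that the supremum strictly exceeds the boundary value at $e_2\to 0^-$; I would obtain this from the local expansion $\Phi(e_2)=\Phi_{max}^{(m,m)}(\alpha)+c\,(-e_2)^{1/2}+o\big((-e_2)^{1/2}\big)$ with $c>0$, whose mechanism is that balancing the angle against the $O\big((-e_2)^{1/2}\big)$ inflow-width forces the outflow bumps to grow (raising the outflow flux at order $(-e_2)^{1/2}$), while the new inflow bumps cost only $J_-=O\big((-e_2)^{3/2}\big)$. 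For the upper bound I would compare, at any admissible roots, with the type $(m+1,m+1)$ flow sharing the same $(e_1,e_2,e_3)$: it has half-angle $\alpha_2=\alpha+I_+>\alpha$ and flux $\Phi_2=2(m+1)J$, and the algebraic identity $2\big(mJ+J_-\big)=\tfrac{m}{m+1}\Phi_2+2J_-$ together with $J_-<0$ and the monotonicity $\Phi_2\le\Phi_{max}^{(m+1,m+1)}(\alpha_2)\le\Phi_{max}^{(m+1,m+1)}(\alpha)$ (Proposition \ref{pro:periodicflow2} and Part (3) of Proposition \ref{Lemmaperiodicflow1}) gives $\Phi_{max}^{(m,m+1)}(\alpha)\le\tfrac{m}{m+1}\Phi_{max}^{(m+1,m+1)}(\alpha)$.

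Finally, the non-uniqueness is immediate from the shape of $\Phi(e_2)$: for $\Phi\in\big(\Phi_{max}^{(m,m)}(\alpha),\Phi_{max}^{(m,m+1)}(\alpha)\big)$ one has $\Phi(-\infty)=-\infty<\Phi$ and $\Phi(0^-)=\Phi_{max}^{(m,m)}(\alpha)<\Phi$, while $\Phi(e_2^{\dagger})=\Phi_{max}^{(m,m+1)}(\alpha)>\Phi$, so the intermediate value theorem applied separately on $(-\infty,e_2^{\dagger})$ and on $(e_2^{\dagger},0)$ yields two distinct admissible values of $e_2$, hence at least two type $(m,m+1)$ solutions. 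I expect the main obstacle to be the analysis of the incomplete elliptic integrals $I_-,J_-$ near the degenerate endpoint $e_2\to 0^-$: pinning down both the limit $\Phi(0^-)=\Phi_{max}^{(m,m)}(\alpha)$ and, above all, the sign and order of the correction term $c\,(-e_2)^{1/2}$ that drives the lower bound and thus the entire non-uniqueness phenomenon. The remaining ingredients---unique solvability of the angle equation and the boundedness and monotonicity of the one-parameter family---reduce to the monotonicity results already recorded in Lemmas \ref{monotone} and \ref{Lemmapureinflow1} and Propositions \ref{Lemmaperiodicflow1}, \ref{pro:periodicflow2}, and \ref{Lemmapureinflow2}.
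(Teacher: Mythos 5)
Your overall architecture is in fact the paper's: a one-parameter family along the level set of the angle functional, two boundary limits, the intermediate value theorem on both sides of an interior maximizer, a comparison with periodic flows for the upper bound, and a $\sqrt{-e_2}$ perturbation for the strict lower bound. The genuine gaps lie in the steps you assert rather than prove. First, the foundation of your family: you claim that for fixed $e_2$ the map $e_1\mapsto mI+I_-$ is strictly decreasing ``as in Lemma \ref{Lemmapureinflow1}.'' That lemma only gives monotonicity of $I_-$ in $e_1$ (there both factors $(e_1-f)$ and $(f-e_3)$ grow on the fixed range $(e_2,0)$); it says nothing about $I_+$ or $I$, and $I_+(e_1,e_2)$ alone is certainly not monotone in $e_1$, since it tends to $0$ both as $e_1\to 0^+$ and as $e_1\to\infty$ for fixed $e_2<0$. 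The claim happens to be true, but proving it amounts to showing $\partial I/\partial e_1<0$, an elliptic-integral computation with $K,E$ of the same nature as those in the proof of Proposition \ref{Lemmaperiodicflow1}; without it your $e_1(e_2)$, hence $\Phi(e_2)$, is not a well-defined continuous curve, and the two-sided intermediate-value argument has nothing to run on. The paper sidesteps this entirely by parametrizing with $e_1$: writing $I_{m,m+1}=(m+1)I-I_+$, strict monotonicity in $e_2$ follows because $I$ is decreasing and $I_+$ increasing in $e_2$ (inequality \eqref{eq1859}), which is where Lemma \ref{monotone} actually applies.

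Second, and more seriously, you misplace the hypothesis $\alpha<\frac{\pi}{2}$ (for $m=1$). Solvability of the angle equation is never the issue: for fixed $e_2\in(-3,0)$ the attainable angles contain $\left(0,(m+1)I_-(0,e_2)\right)\supset(0,\pi)$, so every $\alpha\in(0,\pi)$ is reached. What $\alpha/m<\frac{\pi}{2}$ really buys is the endpoint behavior as $e_2\to 0^-$: only then does $e_1(e_2)$ converge to $e_1^*(\alpha/m)>0$ and $\Phi(e_2)\to\Phi^{(m,m)}_{max}(\alpha)$, which your expansion with $c>0$ requires. Since you build this restriction into the construction itself, your argument proves neither Part (1) nor the inequality chain of Part (2) for $m=1$ and $\alpha\in[\frac{\pi}{2},\pi)$, cases which the proposition asserts for all $\alpha\in(0,\pi)$. (The paper treats this range separately: there both roots collapse, the flux along the constrained family tends to $0$, giving $\Phi^{(1,2)}_{max}(\alpha)\geq 0>\Phi^{(1,1)}_{max}(\alpha)$ for $\alpha>\frac{\pi}{2}$, with $\alpha=\frac{\pi}{2}$ settled via Lemma \ref{Lemmahalfspace-2} in Section \ref{sectionaperturedomain}.) Finally, the positivity of your coefficient $c$ --- equivalently, that the angle balance forces the outflow root to grow, raising $J_+$ at order $\sqrt{-e_2}$ while $J_-=O\bigl((-e_2)^{3/2}\bigr)$ --- is exactly the content of Lemmas \ref{Lemmanonunique-2}--\ref{Lemmanonunique-4}; you correctly single it out as the crux and describe the right mechanism, but you do not prove it, and both the strict inequality $\Phi^{(m,m)}_{max}<\Phi^{(m,m+1)}_{max}$ and the whole non-uniqueness conclusion rest on that sign. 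As written, the proposal is a correct plan along essentially the paper's own route, but not yet a proof.
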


 For type $(m,m+1)$ flows, the angle $\alpha$ and the half flux $\frac{\Phi}{2}$ can be written as
\begin{equation}\label{eq:I1,2}
\begin{aligned}
    I_{m,m+1}(e_1, e_2)&= mI_{+}(e_1, e_2) + (m+1)I_{-}(e_1, e_2), \\
     J_{m,m+1}(e_1, e_2) &= mJ_{+}(e_1, e_2) + (m+1)J_{-}(e_1, e_2),
\end{aligned} 
\end{equation}
where $I_{+}$, $I_{-}$, $J_{+}$ and $J_{-}$ are defined by \eqref{angle},  \eqref{angle2}, \eqref{flux}, and \eqref{flux2}, respectively. 
We mainly focus on the flow of type $(1, 2)$ for the clarity of the presentation, the proof for flows of type $(m, m+1), m\geq 2$ follows the same lines.

\begin{lemma}\label{Lemmatype12-1}
  For $\alpha\in (0,\frac{\pi}{2})$, let $e_1^*(\alpha)$ be the root satisfying $I_{+}(e_1^*(\alpha), 0) = \alpha $, while let $e_1^*(\alpha) = 0$ if $ \alpha\in[\frac{\pi}{2}, \pi)$.  For every $e_1 > e_1^*(\alpha)$, there exists a unique $e_2\in \left(-3 - \frac12 e_1, 0\right)$ such that 
    \begin{equation*}
        I_{1, 2}(e_1, e_2) = \alpha.
    \end{equation*}
\end{lemma}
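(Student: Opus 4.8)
The plan is to fix $e_1 > e_1^*(\alpha)$ and regard $I_{1,2}(e_1,\cdot)$ as a one-variable function of $e_2$ on its admissible interval. Recall that a type $(1,2)$ flow requires three distinct real roots with $e_3 < e_2 < 0 < e_1$; since $e_3 = -6 - e_1 - e_2$, the constraint $e_3 < e_2$ is exactly $e_2 > -3 - \tfrac12 e_1$, so $e_2$ ranges over the open interval $(-3 - \tfrac12 e_1, 0)$. First I would record the two endpoint limits. As $e_2 \to 0^-$ the inflow interval $[e_2, 0]$ collapses and $I_-(e_1, e_2) \to 0$ (the integrand is bounded away from its only singularity $f=e_2$, which contributes an $O(\sqrt{-e_2})$ integral), so $I_{1,2}(e_1, e_2) \to I_+(e_1, 0)$. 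As $e_2 \to (-3 - \tfrac12 e_1)^+$ one has $e_3 \to e_2$, a double root forms at the lower endpoint of the $I_-$-integral, and $I_- \to +\infty$, whence $I_{1,2} \to +\infty$. Thus the range of $I_{1,2}(e_1, \cdot)$ is $(I_+(e_1, 0), +\infty)$, and it suffices to prove strict monotonicity on this interval.

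The crux is the monotonicity, and the observation that makes it clean is the regrouping
\[
I_{1,2}(e_1, e_2) = I_+(e_1, e_2) + 2 I_-(e_1, e_2) = I(e_1, e_2) + I_-(e_1, e_2),
\]
where $I = I_+ + I_-$ is the half-period integral of the type $(1,1)$ flow with the same triple of roots. I would show each summand is strictly decreasing in $e_2$. For $I$ I use its elliptic representation \eqref{periodic-1}, namely $I(e_1, e_2) = \sqrt{6}\, K(\bar{\gamma})/\sqrt{e_1 - e_3}$ with $\bar{\gamma}^2 = (e_1 - e_2)/(e_1 - e_3)$ and $e_1 - e_3 = 2e_1 + e_2 + 6$. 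A short computation gives $\partial_{e_2}(e_1 - e_3) = 1 > 0$ and $\partial_{e_2} \bar{\gamma}^2 = -3(e_1 + 2)/(e_1 - e_3)^2 < 0$; since $K$ is increasing, both $K(\bar{\gamma})$ and $1/\sqrt{e_1 - e_3}$ decrease in $e_2$, hence so does $I$. For $I_-$ I avoid any direct singular-integral differentiation by writing $I_- = I - I_+$: as $I$ is decreasing (just shown) and $I_+(e_1, \cdot)$ is strictly increasing on $[-3 - \tfrac12 e_1, 0]$ by Lemma \ref{monotone}, the difference is strictly decreasing. Therefore $I_{1,2} = I + I_-$ is strictly decreasing in $e_2$.

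It then remains to place $\alpha$ strictly inside $(I_+(e_1, 0), +\infty)$, which is where the hypothesis $e_1 > e_1^*(\alpha)$ enters. By Lemma \ref{monotone}, $I_+(\cdot, 0)$ is strictly decreasing, so for $\alpha \in (0, \tfrac{\pi}{2})$ we get $I_+(e_1, 0) < I_+(e_1^*(\alpha), 0) = \alpha$, while for $\alpha \in [\tfrac{\pi}{2}, \pi)$ we have $e_1^*(\alpha) = 0$ and $I_+(e_1, 0) < \lim_{e_1 \to 0^+} I_+(e_1, 0) = \tfrac{\pi}{2} \leq \alpha$ by \eqref{eq:maxi0.5pi}. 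In either case $\alpha > I_+(e_1, 0)$, so by continuity, the endpoint limits, and strict monotonicity there is exactly one $e_2 \in (-3 - \tfrac12 e_1, 0)$ with $I_{1,2}(e_1, e_2) = \alpha$.

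The main obstacle is the monotonicity step, and the point I would emphasize is that attacking $\partial_{e_2} I_{1,2} = \partial_{e_2} I_+ + 2\,\partial_{e_2} I_-$ head-on is awkward, since $I_+$ increases while $I_-$ decreases and the signs compete. Regrouping as $I + I_-$ and borrowing the already-established monotonicity of the complete elliptic integral $K$ together with Lemma \ref{monotone} dispenses with any new computation. The same regrouping handles the general case verbatim, writing $I_{m,m+1} = m\,I + I_-$, again a sum of decreasing functions.
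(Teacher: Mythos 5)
Your proof is correct and follows essentially the same route as the paper: the same endpoint limits plus the intermediate value theorem for existence, and for uniqueness the same decomposition (your $I + I_-$, after substituting $I_- = I - I_+$, is exactly the paper's $I_{1,2} = 2I - I_+$), with the monotonicity of $I$ obtained from the elliptic representation \eqref{periodic-1} and that of $I_+$ from Lemma \ref{monotone}. The only cosmetic difference is that you verify the monotonicity of $I$ in $e_2$ by differentiating $\bar{\gamma}$ and $e_1 - e_3$, whereas the paper compares two parameter values $e_2' < e_2$ directly.
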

\begin{proof}
Consider $\alpha\in (0,\frac{\pi}{2})$ first. Since  $I_+(e_1,0)$ is strictly decreasing with respect to $e_1$,  for every $e_1 > e_1^*(\alpha)$, it holds that 
    \begin{equation}\nonumber
       \lim_{e_2 \rightarrow 0^-} I_{1, 2}(e_1, e_2) = I_{+}(e_1, 0) < I_{+}(e_1^*(\alpha), 0)= \alpha .
    \end{equation}
    If $ \alpha\in[\frac{\pi}{2}, \pi)$,
    \begin{equation}\nonumber
       \lim_{e_2 \rightarrow 0^-} I_{1, 2}(e_1, e_2) = I_{+}(e_1, 0) < \frac{\pi}{2}\leq \alpha .
    \end{equation}
On the other hand, 
\begin{equation}\nonumber
    \lim_{e_2 \rightarrow (-3 -\frac12 e_1)+} I_{1,2}(e_1, e_2) \geq \lim_{e_2 \rightarrow (-3 -\frac12 e_1)+} \int_{e_2}^0 \frac{df}{\sqrt{-\frac23 (f- e_1) (f-e_2) (f+6+e_1+e_2)}} =\infty. 
\end{equation}
Hence, by the intermediate value theorem, there exists an $e_2 \in (-3 -\frac12 e_1, 0)$ such that $I_{1,2}(e_1, e_2)=\alpha.$

 Next, we turn to the uniqueness part. We first claim that
   for $  e_2 \in(-3-\frac12 e_1,0]$, $I(e_1, e_2) $ is
  \emph{decreasing} with respect to $e_2$.  
    Let $-3-\frac12 e_1< {{e}}_2' < e_2 \leq 0$.
    According to \eqref{periodic-1}, it holds that 
    \begin{equation}\nonumber
    I(e_1, e_2) = \frac{\sqrt{6} K ( \bar{\gamma})}{\sqrt{e_1 - e_3}}, \ \ \ \ \mbox{with}\ e_3 = -6-e_1-e_2, \ \  \bar{\gamma} =\sqrt{\frac{e_1-e_2}{e_1 -e_3}}, 
    \end{equation}
    \begin{equation}\nonumber
     I (e_1, {e}_2') = \frac{\sqrt{6} K (\tilde{ \bar{\gamma}})}{\sqrt{e_1 - {e}_3'}}, \ \ \ \ \mbox{with}\ {e}_3' = -6-e_1-{e}_2', \ \ { \bar{\gamma}'} =\sqrt{\frac{e_1-{e}_2'}{e_1 -{e}_3'}}.
    \end{equation}
     Since ${e}_2'<e_2$, one has ${e}_3'> e_3$ and hence 
    \begin{equation}\label{nonunique-4}
      \bar{\gamma} < { \bar{\gamma}'}, \ \ \ \ \  \    I(e_1, e_2) <I(e_1, {e}_2').
    \end{equation} 
   Meanwhile, according to Lemma \ref{monotone}, 
   \begin{equation}\label{nonunique-5}
       I_{+}(e_1, e_2)> I_{+}(e_1, {e}_2'). 
   \end{equation}
    Therefore, combining \eqref{nonunique-4} and \eqref{nonunique-5}, we have
    \begin{equation}
    \label{eq1859}
        I_{1, 2}(e_1, e_2) = 2I(e_1, e_2) - I_{+}(e_1, e_2) < I_{1, 2}(e_1, {e}_2').
    \end{equation} 
    Hence the uniqueness of $e_2$ is proved. 
\end{proof}

\begin{lemma}\label{Lemmatype12-2}
Assume that $e_1^*(\alpha)$ is the same as that in Lemma \ref{Lemmatype12-1}, and $e_1>e_1^*(\alpha)$.  Let $e_2(e_1)$ denote the unique root indicated in Lemma \ref{Lemmatype12-1}, such that $I_{1,2}(e_1, e_2(e_1)) = \alpha. $ Then 
\begin{equation} \label{type12-5}\nonumber
    \lim_{e_1\rightarrow \infty} J_{1, 2}(e_1, e_2(e_1)) = -\infty. 
\end{equation}
\end{lemma}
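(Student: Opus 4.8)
The plan is to reduce everything to the quantities attached to the associated type $(1,1)$ flow built from the same triple of roots, and then to exploit the relation \eqref{periodic-3} together with the monotonicity of $H$ from Lemma \ref{functionH}. The starting point is the purely algebraic identity coming from $J_{1,2}=J_++2J_-$ in \eqref{eq:I1,2}, namely
\[
J_{1,2}(e_1,e_2)=J(e_1,e_2)+J_-(e_1,e_2),
\]
where $J=J_++J_-$ is the half-flux \eqref{periodic-02} of the type $(1,1)$ flow with the same roots $(e_1,e_2,e_3)$. Since $J_-(e_1,e_2)=\int_{e_2}^{0}\frac{f\,df}{\sqrt{Q(f)}}\le 0$, we have $J_{1,2}\le J$, so it suffices to show that $J(e_1,e_2(e_1))\to-\infty$ as $e_1\to\infty$.

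First I would control the angle data along the curve $e_2=e_2(e_1)$. By Lemma \ref{monotone}, $I_+(e_1,e_2(e_1))\le I_+(e_1,0)$, and $I_+(e_1,0)\to 0$ as $e_1\to\infty$ (cf. \eqref{eq:asy1}). Combined with the constraint $I_{1,2}=I_++2I_-=\alpha$, this forces $I_-(e_1,e_2(e_1))\to\alpha/2$, and hence $I(e_1,e_2(e_1))=I_++I_-\to\alpha/2\in(0,\tfrac{\pi}{2})$. Next I would show $e_2(e_1)\to-\infty$: if $e_2$ stayed bounded along a subsequence, then $e_3=-6-e_1-e_2\to-\infty$, so both $(e_1-f)$ and $(f-e_3)$ would blow up on the fixed interval $(e_2,0)$, forcing $I_-\to 0$ and contradicting $I_-\to\alpha/2$.

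The heart of the argument is to prove $\bar{\gamma}=\sqrt{(e_1-e_2)/(e_1-e_3)}\to 1^-$. Writing
\[
\bar{\gamma}^2=1-\frac{e_2-e_3}{(e_1-e_2)+(e_2-e_3)},
\]
and noting $e_1-e_2>e_1\to\infty$, it is enough to bound the root gap $e_2-e_3$. For $e_1$ large we have $e_2<-6$, so the ordering $e_3\le e_2$ forces $e_1\ge -6-2e_2>|e_2|$; the same elementary estimate used in Step 1 of the proof of Proposition \ref{Lemmapureinflow2} then gives $I_-\le \sqrt{6}/\sqrt{e_2-e_3}$, i.e. $e_2-e_3\le 6/I_-^2$, which is bounded since $I_-\to\alpha/2>0$. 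Thus $\bar{\gamma}\to 1^-$.

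Finally, applying the identity \eqref{periodic-3} to the triple $(e_1,e_2,e_3)$, viewed as a type $(1,1)$ flow with half-angle $I$ and flux $2J$, yields
\[
I^2+\frac{IJ}{2}=H(\bar{\gamma}).
\]
By Lemma \ref{functionH}, $H(\bar{\gamma})\to-\infty$ as $\bar{\gamma}\to 1^-$, whereas $I\to\alpha/2>0$ stays bounded; hence $J\to-\infty$, and therefore $J_{1,2}=J+J_-\le J\to-\infty$, as claimed. I expect the main obstacle to be the control of the root gap $e_2-e_3$ that is needed to push $\bar{\gamma}$ to $1$ — once that boundedness is secured, the rest is monotonicity bookkeeping and a direct application of the $H$-relation. (The naive bounds $J_+\le J_+(e_1,0)=O(\sqrt{e_1})$ and $|J_-|\le|e_2|\,I_-$ are not by themselves sharp enough to compare the two growing contributions, which is precisely why routing through \eqref{periodic-3} is preferable.)
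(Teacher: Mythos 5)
Your proof is correct, and its skeleton coincides with the paper's: both reduce to the type-$(1,1)$ half-flux via $J_{1,2}=J+J_-\le J$ (since $J_-\le 0$), both show $\bar{\gamma}\to 1^-$ along the constraint curve, and both conclude by viewing the triple as a type-$(1,1)$ flow with angle $\beta=I(e_1,e_2(e_1))$ and flux $2J$, feeding $\bar{\gamma}\to1^-$ into \eqref{periodic-3} and invoking Lemma \ref{functionH}. The one genuine difference is the middle step. The paper obtains $\bar{\gamma}\to1^-$ in a single line from the identity \eqref{periodic-4}: since $\beta\in(\tfrac{\alpha}{2},\alpha)$ is bounded, the relation $e_1=-2+\frac{2}{\beta^2}(\bar{\gamma}^2+1)K(\bar{\gamma})^2$ with $e_1\to\infty$ forces $K(\bar{\gamma})\to\infty$, hence $\bar{\gamma}\to1$. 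You instead prove $e_2(e_1)\to-\infty$ by contradiction, then bound the root gap via $e_2-e_3\le 6/I_-^2$ using the elementary integral estimate from the pure-inflow section, and read off $\bar{\gamma}\to 1$ from its definition. This is more work, but it is sound (all the side conditions you need, such as $e_1\ge -6-2e_2>|e_2|$ for large $e_1$, do hold), it yields sharper quantitative information ($I\to\tfrac{\alpha}{2}$ rather than just $I\in(\tfrac{\alpha}{2},\alpha)$, and $e_2-e_3=O(1)$), and it is essentially the technique the paper itself deploys later in Lemma \ref{Lemmatype21-2} for type $(2,1)$ flows. Both routes are valid; the paper's is shorter because \eqref{periodic-4} already encodes the root-gap information algebraically.
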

\begin{proof}
    Let
    \begin{equation*}
        \beta =I(e_1, e_2(e_1)) = \int_{e_2(e_1)}^{e_1 } \frac{df}{\sqrt{-\frac23 (f - e_1) (f-e_2(e_1)) (f -e_3)}}, 
    \end{equation*}
    with $e_3 = -6 - e_1 - e_2(e_1)$. It follows from the definition of $I_{1,2}(e_1,e_2)$ in ${\eqref{eq:I1,2}}_1$ (in which $m=1$) that $\frac12 \alpha < \beta <\alpha$. 
According to \eqref{periodic-4}, one has  
\begin{equation*}
    e_1  = -2 + \frac{2}{\beta^2}( \bar{\gamma}^2 +1 ) K ( \bar{\gamma})^2 \ \ \ \ \mbox{with}\  \bar{\gamma} = \sqrt{\frac{e_1 - e_2(e_1)}{e_1 - e_3}}. 
\end{equation*}
As $e_1\to +\infty$, $K( \bar{\gamma}) \to+\infty$.
Hence it holds that
\begin{equation*}
    \lim_{e_1 \rightarrow +\infty}  \bar{\gamma}(e_1, e_2(e_1)) = 1. 
\end{equation*}
Let 
\begin{equation*}
    \tilde{\Phi}= 2J(e_1, e_2(e_1)) = 2\int_{e_2(e_1)}^{e_1} \frac{f\, df}{\sqrt{-\frac23(f- e_1)(f-e_2) (f-e_3)}}.
\end{equation*}
According to \eqref{periodic-3}, one has
\begin{equation*}
    \beta^2 + \frac14 \beta  \tilde{\Phi} = H( \bar{\gamma}) \rightarrow -\infty \ \ \ \ \mbox{as}\  \bar{\gamma} \rightarrow 1^-.
\end{equation*} 
This implies that $\tilde{\Phi} \to -\infty$, as $e_1 \to +\infty$. 
Consequently, 
\begin{equation*}
   \lim_{e_1 \rightarrow +\infty} J_{1,2}(e_1, e_2(e_1)) \leq \lim_{e_1 \rightarrow +\infty} J (e_1, e_2(e_1)) = -\infty.
\end{equation*}
Hence the proof for Lemma \ref{Lemmatype12-2} is completed.
\end{proof}

\begin{proof}[Proof of Part (1) of Proposition \ref{Lemmatype12-5} when $m=1$] Assume that the triple of roots $(e_1, e_2, e_3)$ gives a flow of type $(1, 2)$  with the flux $\Phi$.  Let $\beta = I(e_1, e_2)$. The root  $(e_1, e_2, e_3)$ also gives a flow of type $(1, 1)$ in the sector with angle $2\beta$  with a different flux $\tilde{\Phi}:=2J(e_1,e_2)\geq\Phi$. Obviously, it holds that 
\begin{equation*}
    \frac{\alpha}{2} \leq   \beta \leq  \alpha. 
\end{equation*}
Hence, according to Proposition \ref{Lemmaperiodicflow1}, one has
\begin{equation*}
    \Phi_{max}^{(1, 1)} (\beta) \leq \Phi_{max}^{(1, 1)}\left(\frac{\alpha}{2}\right), 
\end{equation*}
and consequently, 
\begin{equation}\label{new1}
    \Phi
    \leq \tilde{\Phi}\leq
    \Phi_{max}^{(1, 1)}(\beta) \leq \Phi_{max}^{(1, 1)}\left(\frac{\alpha}{2}\right). 
\end{equation}
Therefore, there is an upper bound for the flux. Combining the fact with Lemma \ref{Lemmatype12-2}, one can conclude that there is a critical value $\Phi_{max}^{(1, 2)}(\alpha)$, above which the flow of type $(1, 2)$ is impossible, and below which there is at least one flow of type $(1, 2)$.
\end{proof}

Next we study the \emph{non-uniqueness} of type $(1, 2)$ flows, for given $0< \alpha< \frac{\pi}{2}$ and flux $\Phi$. We first prove that  $\Phi_{max}^{(1, 2)}(\alpha) > \Phi_{max}^{(1, 0)}(\alpha)= \Phi_{max}^{(1, 1)}(\alpha)$. Second, assume that $\Phi_{max}^{(1, 2)}(\alpha)$ is attained at $(e_1^0, e_2^0)$, and $\Phi_{max}^{(1, 0)}(\alpha)$ is attained at $(e_1^*, 0)$, we have $e_1^0> e_1^*$. 
Then for every $\Phi_{max}^{(1, 0)}(\alpha)=\Phi_{max}^{(1, 1)}(\alpha)< \Phi < \Phi_{max}^{(1, 2)}(\alpha)$, there are at least two flows of type $(1, 2)$. For one flow, the corresponding root satisfies $e_1^* < e_1 < e_1^0$, while for another one, $e_1> e_1^0$. 

For simplicity, we take the angle $\bar{\alpha} = I_{+}(1, 0)<\frac{\pi}{2}$ and show that $\Phi_{max}^{(1, 2)}(\bar{\alpha}) > \Phi_{max}^{(1, 0)}(\bar{\alpha})$, and the non-uniqueness of the corresponding type $(1, 2)$ flows. One can generalize the proof for a general $\alpha\in(0,\frac{\pi}{2})$ immediately.

\begin{lemma}\label{Lemmanonunique-2}
Let $e_1 > 1$, and $e_2 = e_2(e_1)$ satisfy $I_{1, 2}(e_1, e_2(e_1)) = \bar{\alpha} =I_{+}(1, 0)$. It holds that 
    \begin{equation}\label{nonunique-7}
\lim_{e_1 \to 1^+} e_2(e_1) = 0. 
    \end{equation}
    \end{lemma}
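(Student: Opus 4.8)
The plan is to recognize $(e_1,e_2,e_3)=(1,0,-7)$ as the degenerate type $(1,0)$ configuration toward which the type $(1,2)$ roots are forced to collapse, and to prove the convergence by a compactness-plus-monotonicity argument rather than by a direct estimate of $e_2(e_1)$. First I would record the two facts that fix the geometry at the endpoint: since $\bar{\alpha}=I_{+}(1,0)$ we have $e_1^*(\bar{\alpha})=1$, and because the inflow interval $[e_2,0]$ degenerates to a point when $e_2=0$ we have $I_{-}(1,0)=0$; hence the pair $(1,0)$ satisfies $I_{1,2}(1,0)=I_{+}(1,0)=\bar{\alpha}$. Thus $(1,0)$ is exactly the boundary solution of the angle equation $I_{1,2}(1,\cdot)=\bar{\alpha}$, and the content of the lemma is that the solution branch $e_2(e_1)$ attaches continuously to it as $e_1\downarrow 1$.

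Since $e_2(e_1)\in(-3-\tfrac12 e_1,0)$ by Lemma \ref{Lemmatype12-1}, the quantity $e_2(e_1)$ stays bounded for $e_1\in(1,2]$, so it suffices to show that every subsequential limit $L$ of $e_2(e_1)$ as $e_1\to 1^{+}$ equals $0$. Fix such an $L\in[-\tfrac72,0]$ attained along a sequence $e_1^{(n)}\downarrow 1$. The decisive tool is the strict monotonicity of $I_{1,2}(e_1,\cdot)$ in its second argument that is established inside the proof of Lemma \ref{Lemmatype12-1} through the identity $I_{1,2}=2I-I_{+}$ together with \eqref{nonunique-4} and \eqref{nonunique-5}; this holds for every fixed $e_1>0$, in particular at $e_1=1$. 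Combined with $I_{1,2}(1,0)=\bar{\alpha}$, it gives $I_{1,2}(1,e_2)>\bar{\alpha}$ for every $e_2\in(-\tfrac72,0)$.

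With this in hand I would rule out $L<0$ in two cases. If $L\in(-\tfrac72,0)$, then the three roots $1,\,L,\,-7-L$ are distinct, the integrand defining $I_{1,2}$ is non-degenerate near $(1,L)$, and $I_{1,2}$ is continuous there; passing to the limit in $I_{1,2}(e_1^{(n)},e_2(e_1^{(n)}))=\bar{\alpha}$ yields $I_{1,2}(1,L)=\bar{\alpha}$, contradicting $I_{1,2}(1,L)>\bar{\alpha}$. The remaining case $L=-\tfrac72$ is the main obstacle, because there the two smallest roots merge: $e_2^{(n)}\to-\tfrac72$ while $e_3^{(n)}=-6-e_1^{(n)}-e_2^{(n)}\to-\tfrac72$, so $e_2^{(n)}-e_3^{(n)}\to 0$. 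I would dispatch it using that the elliptic integral diverges when two roots coalesce: from $I_{1,2}\ge 2I_{-}(e_1^{(n)},e_2^{(n)})$ and the logarithmic singularity created at the lower endpoint by the near-double root, one gets $I_{-}\to\infty$, contradicting $I_{1,2}\equiv\bar{\alpha}<\infty$. Hence $L=0$ is the only possible subsequential limit, and since $e_2(e_1)$ is bounded this forces $\lim_{e_1\to 1^{+}}e_2(e_1)=0$, which is \eqref{nonunique-7}.
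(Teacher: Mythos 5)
Your proof is correct and follows essentially the same route as the paper's: extract a convergent subsequence of $e_2(e_1)$ from the bounded interval $(-3-\tfrac12 e_1,0)$ and use the strict monotonicity of $I_{1,2}(e_1,\cdot)$ in $e_2$ (the inequality \eqref{eq1859}) to contradict $I_{1,2}(1,0)=\bar{\alpha}$. Your treatment is in fact slightly more careful than the paper's, since you separately dispose of the degenerate subsequential limit $L=-\tfrac72$, where $e_2$ and $e_3$ coalesce and $I_{-}$ diverges; the paper's one-line passage to the limit $I_{1,2}(e_1^n,e_2(e_1^n))\to I_{1,2}(1,-\delta)$ tacitly assumes continuity, which strictly speaking only holds when the limit lies in the open interval.
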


\begin{proof}
Suppose \eqref{nonunique-7} does not hold. It follows from the fact   $ e_2(e_1)\in (-3-\frac12 e_1 ,0)$ that there exists a sequence of $\{e_1^n\}$, and a positive number $\delta >0$, such that 
\begin{equation*}
    \lim_{n\to \infty} e_1^n =1,\ \ \ \ \ \lim_{n\to \infty}e_2(e_1^n) = -\delta.
\end{equation*}
Then using the monotonicity formula \eqref{eq1859} of $I_{1,2}(e_1,e_2)$ with respect to $e_2$, one can get
\begin{equation*}
    \bar{\alpha} = \lim_{n\to \infty} I_{1, 2}(e_1^n, e_2(e_1^n)) = I_{1, 2}(1, -\delta) > I_{1, 2}(1, 0) = \bar{\alpha}.
\end{equation*}
This leads to a contradiction so that \eqref{nonunique-7} must hold. 
\end{proof}

\begin{lemma}\label{Lemmanonunique-3}
Let $e_1 > 1$, and $e_2 = e_2(e_1)$ satisfy $I_{1, 2}(e_1, e_2(e_1)) = \bar{\alpha} =I_{+}(1, 0)$. 
There exists a constant $C_0>0$ such that 
\begin{align*}
    \lim_{e_1 \to 1^+} \frac{\sqrt{-e_2(e_1)}}{e_1-1} = C_0. 
\end{align*}
\end{lemma}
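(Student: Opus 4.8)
The plan is to analyze the defining constraint $I_{1,2}(e_1,e_2(e_1))=\bar{\alpha}$ in a neighborhood of the limiting root configuration $(e_1,e_2)=(1,0)$, which is legitimate since Lemma \ref{Lemmanonunique-2} already guarantees $e_2(e_1)\to 0^-$ as $e_1\to 1^+$. Recalling ${\eqref{eq:I1,2}}_1$ with $m=1$, I would write $I_{1,2}=I_{+}+2I_{-}=(I_{+}+I_{-})+I_{-}=I+I_{-}$, where $I$ is the full half-period integral from \eqref{periodic-01}. The advantage of this decomposition is a clean separation of scales: $I$ is smooth in $(e_1,e_2)$ up to and including $e_2=0$, while all of the non-smooth (square-root) behavior as $e_2\to 0^-$ is carried by $I_{-}$. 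Using the substitution $f=e_2+(e_1-e_2)s$ one checks that $I(e_1,e_2)=\int_0^1\frac{ds}{\sqrt{\frac23 s(1-s)\,(6+e_1+2e_2+(e_1-e_2)s)}}$, whose integrand is smooth in the parameters with fixed integrable endpoint singularities, so $I\in C^\infty$ near $(1,0)$; moreover $I(e_1,0)=I_{+}(e_1,0)$ since $I_{-}(e_1,0)=0$. In particular $I(1,0)=I_{+}(1,0)=\bar{\alpha}$, and by Lemma \ref{monotone} (and the derivative formula for $I_{+}(e_1,0)$ computed in the proof of Proposition \ref{Lemmapureoutflow2}) the quantity $\lambda:=-\partial_{e_1}I(1,0)=-\frac{d}{de_1}I_{+}(e_1,0)\big|_{e_1=1}$ is strictly positive.

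The key asymptotic input is the boundary-layer behaviour of $I_{-}$. Substituting $f=e_2(1-u)$ in \eqref{flux2}'s companion \eqref{angle2} gives the exact rescaling
\[
I_{-}(e_1,e_2)=\sqrt{-e_2}\int_0^1\frac{du}{\sqrt{\tfrac23\,u\,\bigl(e_1-e_2(1-u)\bigr)\bigl(e_2(1-u)-e_3\bigr)}}=:\sqrt{-e_2}\;G(e_1,e_2),
\]
with $e_3=-6-e_1-e_2$. For $(e_1,e_2)$ near $(1,0)$ with $e_2\le 0$ the two non-singular factors stay positive and bounded, so by dominated convergence $G$ is continuous and $G(e_1,e_2)\to G(1,0)=\frac{2}{\sqrt{\frac23\cdot 1\cdot 7}}=2\sqrt{3/14}>0$.

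I would then combine these. From the constraint $I(e_1,e_2)+I_{-}(e_1,e_2)=\bar{\alpha}=I(1,0)$ and the $C^1$ expansion of $I$,
\[
-\lambda\,(e_1-1)+o(|e_1-1|)+\partial_{e_2}I(1,0)\,e_2 = -\sqrt{-e_2}\,G(e_1,e_2).
\]
Since $e_2\to 0$ implies $|e_2|=o(\sqrt{-e_2})$, the term $\partial_{e_2}I(1,0)\,e_2$ is absorbed into $o(\sqrt{-e_2})$, and dividing by $e_1-1>0$ yields
\[
\lambda\,(1+o(1))=\frac{\sqrt{-e_2}}{e_1-1}\,\bigl(G(e_1,e_2)+o(1)\bigr).
\]
Because $G(e_1,e_2)+o(1)\to G(1,0)>0$, this forces $\frac{\sqrt{-e_2}}{e_1-1}$ to be bounded and bounded away from zero, and then to converge to the positive constant $C_0=\lambda/G(1,0)=-\tfrac12\sqrt{14/3}\,\frac{d}{de_1}I_{+}(e_1,0)\big|_{e_1=1}>0$, as claimed.

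The main obstacle is making the separation of scales rigorous: one must justify that $I_{-}$ contributes exactly a $\sqrt{-e_2}$ term with a convergent, strictly positive rescaled coefficient $G(1,0)$, while every correction (the smooth remainder of $I$ and the $O(e_2)$ piece) is genuinely $o(\sqrt{-e_2})$, so that no circularity enters when extracting the ratio. This rests on the continuity of $G$ up to $e_2=0$ and, crucially, on $\lambda\neq 0$, i.e. the strict monotonicity of $I_{+}(e_1,0)$ from Lemma \ref{monotone}; were the linear coefficient to vanish, the balance would be between $\sqrt{-e_2}$ and a higher-order term in $(e_1-1)$ and the exponent in the stated limit would change.
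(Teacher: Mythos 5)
Your proposal is correct and follows essentially the same route as the paper: the paper's proof decomposes $I_{1,2}(e_1,e_2)-I_{1,2}(1,0)$ into $[I(e_1,e_2)-I(e_1,0)]+[I_{+}(e_1,0)-I_{+}(1,0)]+I_{-}(e_1,e_2)$, shows these behave like $C_1 e_2$, $C_2(e_1-1)$ with $C_2<0$, and $C_3\sqrt{-e_2}$ with $C_3>0$, and balances them exactly as you do, obtaining $C_0=-C_2/C_3$, which agrees with your $\lambda/G(1,0)$. The only cosmetic differences are that the paper verifies the regularity of $I$ via the explicit formula $I=\sqrt{6}K(\bar{\gamma})/\sqrt{6+2e_1+e_2}$ rather than your parametrized-integral substitution, and splits the smooth part into two one-variable increments rather than a two-variable Taylor expansion; both treatments of the $o(\sqrt{-e_2})$ bookkeeping are equivalent.
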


\begin{proof}
    Note that \begin{align}
        I_{1,2}(e_1, e_2) - I_{1, 2}(1, 0) &= I(e_1, e_2) - I(e_1, 0) + I(e_1, 0) - I(1, 0) + I_{-}(e_1, e_2) \nonumber \\  
        &= \underline{ I(e_1, e_2) - I(e_1, 0) }+ \underline{I_{+}(e_1, 0) - I_{+}(1, 0)} + \underline{ I_{-}(e_1, e_2)}. \label{nonunique-9}
    \end{align}
Herein, due to \eqref{periodic-1}, we have
\begin{align*}
    I (e_1, e_2) = \frac{\sqrt{6} K ( \bar{\gamma} (e_1, e_2))}{\sqrt{e_1 - e_3}} = \frac{\sqrt{6} K ( \bar{\gamma} (e_1, e_2))}{\sqrt{6+ 2e_1 +e_2}}\ \ \ \mbox{with}\  \bar{\gamma}(e_1, e_2) = 
    \sqrt{\frac{e_1-e_2}{6+2e_1 +e_2}}.
\end{align*}
Hence, 
    \begin{align*}
        \frac{\partial I(e_1, e_2)}{\partial e_2} =
        \frac{\sqrt{6}K^\prime ( \bar{\gamma}) \frac{\partial  \bar{\gamma}}{\partial e_2}}{\sqrt{6+ 2e_1 + e_2}}- \frac{ \sqrt{6} K ( \bar{\gamma}) }{2(6+2e_1 +e_2)^{\frac32}}.
    \end{align*}
    and 
\begin{align}
    \lim_{e_1 \to 1^+, e_2 \to 0^-} \frac{I (e_1, e_2) - I (e_1, 0)}{e_2} & =  \lim_{e_1 \to 1^+, e_2 \to 0^-} \frac{\partial I (e_1, e_2)}{\partial e_2} \nonumber \\
    &= \frac{-9\sqrt{6}K^\prime (\sqrt{\frac{1}{8}})}{128}-\frac{\sqrt{6}K(\sqrt{\frac18})}{16\sqrt{8}}:=C_1. \label{nonunique-10}
\end{align}

For the second part of \eqref{nonunique-9}, since
\begin{align*}
    \frac{d}{de_1} I_{+}(e_1, 0) & =\frac{d}{de_1} \int_0^{e_1} 
    \frac{df}{\sqrt{-\frac23(f-e_1)f(f+6+e_1 )}}  \\
    & = \frac{d}{de_1} \int_0^1 \frac{dg}{\sqrt{\frac23 (1-g)g(e_1 g+6+e_1)}} \\
    &= \int_0^1 \frac{-\frac12 (g+1)\, dg}{\sqrt{\frac23 (1-g)g (e_1 g + 6+ e_1)^3}}, 
\end{align*}
it holds that
 \begin{align}
    \lim_{e_1\to 1^+} \frac{I_{+} (e_1, 0) - I_{+}(1, 0)}{e_1 -1} = \frac{d}{de_1}I_{+}(1, 0) :=C_2 <0.  
 \end{align} 

For the last part of \eqref{nonunique-9}, it follows from
\begin{align*}
    I_{-}(e_1, e_2)& =\int_{e_2}^0 \frac{df}{\sqrt{-\frac23(f-e_1)(f-e_2)(f+6+e_1 +e_2)}} \\ & = \int_0^1 \frac{\sqrt{-e_2} \, dg}{\sqrt{\frac23 (e_1 -e_2g)(1-g) (e_2 g + 6+e_1 +e_2)}}
\end{align*}
that 
\begin{align}\label{nonunique-14}
    \lim_{e_1\to 1^+, e_2\to 0^-}\frac{I_{-}(e_1, e_2)}{\sqrt{-e_2}}
    =\int_0^1 \frac{dg}{\sqrt{\frac{14}{3} (1-g)}}:=C_3 >0. 
\end{align}

Now using $I_{1, 2}(e_1, e_2(e_1)) = I_{1, 2}(1, 0) =\bar{\alpha}$ and combining  \eqref{nonunique-9}-\eqref{nonunique-14}, we have 
\begin{align*}
    \lim_{e_1\to 1^+} \frac{\sqrt{-e_2(e_1)}}{e_1 -1}= -\frac{C_2}{C_3}=C_0>0,
\end{align*}
This finishes the proof  the lemma.
\end{proof}

\begin{lemma}\label{Lemmanonunique-4}
Let $e_1 > 1$, and $e_2 = e_2(e_1)$ satisfy 
    $I_{1, 2}(e_1, e_2(e_1)) = \bar{\alpha} =I_{+}(1, 0).$ 
It holds that 
\begin{align*}    
\lim_{e_1 \to 1^+} \frac{J_{1, 2}(e_1, e_2(e_1)) - J_{1, 2}(1, 0)}{e_1 -1}>0.
\end{align*}
\end{lemma}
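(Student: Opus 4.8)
The plan is to reproduce, for the flux integral $J_{1,2}$, the same three-term splitting used for $I_{1,2}$ in \eqref{nonunique-9}, and then to show that after dividing by $e_1-1$ only the pure-outflow contribution survives in the limit. Since $J(e_1,0)=J_+(e_1,0)$ (the inflow part vanishes when $e_2=0$) and $J=J_++J_-$, one has
\[
J_{1,2}(e_1,e_2)-J_{1,2}(1,0)=\big[J(e_1,e_2)-J(e_1,0)\big]+\big[J_+(e_1,0)-J_+(1,0)\big]+J_-(e_1,e_2),
\]
in exact parallel with \eqref{nonunique-9}. I would divide by $e_1-1$, let $e_1\to1^+$ (so that $e_2=e_2(e_1)\to0^-$), and estimate the three brackets using the sharp order $\sqrt{-e_2(e_1)}/(e_1-1)\to C_0$ supplied by Lemma \ref{Lemmanonunique-3}; in particular $e_2(e_1)=O((e_1-1)^2)$.

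For the first bracket, the type-$(1,1)$ integral $J(e_1,e_2)$ is smooth in $(e_1,e_2)$ near $(1,0)$ — this is clear from the elliptic representation in \eqref{periodic-3-1}, where the relevant modulus is $\bar{\gamma}=\sqrt{1/8}\in[0,1)$ — so $\big(J(e_1,e_2)-J(e_1,0)\big)/e_2$ stays bounded, while $e_2/(e_1-1)=O(e_1-1)\to0$; hence this bracket contributes $0$. For the third bracket I would use the substitution $f=e_2g$, as in the derivation of \eqref{nonunique-14}, to extract the endpoint singularity, obtaining
\[
J_-(e_1,e_2)=-(-e_2)^{3/2}\int_0^1\frac{g\,dg}{\sqrt{\frac23(e_1-e_2g)(1-g)(e_2g+6+e_1+e_2)}},
\]
so that $J_-(e_1,e_2)=O\big((-e_2)^{3/2}\big)$; dividing by $e_1-1=O(\sqrt{-e_2})$ leaves an $O(-e_2)$ quantity, which again tends to $0$. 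The surviving middle bracket is smooth in $e_1$, giving
\[
\lim_{e_1\to1^+}\frac{J_+(e_1,0)-J_+(1,0)}{e_1-1}=\frac{d}{de_1}J_+(1,0),
\]
and this is strictly positive: differentiating $J_+(e_1,0)=\sqrt{e_1}\int_0^1 g\big(\tfrac23(1-g)g(g+1+6/e_1)\big)^{-1/2}\,dg$ under the integral sign produces only positive terms, since both $\sqrt{e_1}$ and $(g+1+6/e_1)^{-1/2}$ increase in $e_1$ (this is the monotonicity of $J_+(e_1,0)$ recorded in the proof of Proposition \ref{Lemmapureoutflow2}). Summing the three limits yields $\frac{d}{de_1}J_+(1,0)>0$, which is the assertion.

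The only genuinely delicate point is the bookkeeping of orders. The new inflow term $J_-$ is of order $(-e_2)^{3/2}$, one factor of $\sqrt{-e_2}$ higher than the corresponding term $I_-=O(\sqrt{-e_2})$ in the angle computation \eqref{nonunique-14}, because the extra weight $f\approx e_2g$ in the flux integrand supplies an additional $(-e_2)$. This is exactly why, in contrast to the angle identity where the first-order outflow and inflow contributions cancelled (the relation $C_2+C_3C_0=0$ behind Lemma \ref{Lemmanonunique-3}), here no cancellation takes place and the limit collapses to the single positive number $\frac{d}{de_1}J_+(1,0)$. The remaining routine work is to justify differentiation under the integral sign for $J_+(e_1,0)$ and the boundedness of $\partial_{e_2}J$ near $(1,0)$, both of which follow from the integrable square-root endpoint singularities.
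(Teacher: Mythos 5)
Your proposal is correct and follows essentially the same route as the paper's proof: the same three-term splitting (yours merely regroups the paper's decomposition, absorbing one copy of $J_-$ into a $J$-difference, which is equivalent since $J=J_++J_-$ and $J_-(e_1,0)=0$), the same use of Lemma \ref{Lemmanonunique-3} to show the $e_2$-dependent brackets are $o(e_1-1)$ via $e_2(e_1)=O((e_1-1)^2)$ and $J_-=O((-e_2)^{3/2})$, and the same surviving positive limit $\frac{d}{de_1}J_+(1,0)$ (the paper's constant $C_5$). The only cosmetic differences are that you bound $\partial_{e_2}J$ through the elliptic representation \eqref{periodic-3-1} where the paper bounds $\partial_{e_2}J_+$ directly as an integral, and you establish positivity of the derivative by monotonicity of $\sqrt{e_1}$ and $(g+1+6/e_1)^{-1/2}$ rather than by the explicit formula \eqref{eq:deriofJ+}; both arguments are sound.
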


\begin{proof}
Consider that 
    \begin{align}
    &J_{1, 2}(e_1, e_2(e_1)) -J_{1, 2}(1, 0) \nonumber \\
    = & \underline{J_{+}(e_1, e_2(e_1)) - J_{+}(e_1, 0) }+ \underline{ J_{+}(e_1, 0)- J_{+}(1, 0) }+ \underline{ 2J_{-}(e_1, e_2(e_1))}. \label{nonunique-15}
    \end{align}
First, it follows from the straightforward computations that
\begin{align*}
    \frac{\partial}{\partial e_2} J_{+}(e_1, e_2) &=
    \frac{\partial}{\partial e_2}\int_0^{e_1} \frac{f\, df}{\sqrt{\frac23(e_1 -f) (f-e_2)(f+ 6 + e_1 + e_2)}}\\
    &= \int_0^{e_1} \frac{ (3+ e_2 +\frac12 e_1) f \, df  }{\sqrt{\frac23 (e_1 -f)} \left[ (f-e_2) (f+ 6 + e_1 +e_2)\right]^{\frac32}},
\end{align*}
and thus
\begin{align*}
    \lim_{e_1 \to 1^+, e_2\to 0^-} \frac{\partial}{\partial e_2} J_{+} (e_1, e_2) =\int_0^1 \frac{\frac72  \, df }{\sqrt{\frac23 (1-f) f (f+7)^3 }}  : =C_4>0. 
\end{align*}
Hence combining Lemmas \ref{Lemmanonunique-2} and \ref{Lemmanonunique-3} yields
\begin{align}\label{nonunique-16}
    \lim_{e_1\to 1^+} \frac{J_{+}(e_1, e_2(e_1)) - J_{+}(e_1, 0)}{e_1 -1}=  \lim_{e_1\to 1^+}\frac{J_{+}(e_1, e_2(e_1)) - J_{+}(e_1, 0)}{e_2(e_1)} \cdot \frac{e_2(e_1)}{e_1-1}  =0.
\end{align}

Second, one has
\begin{equation}\label{eq:deriofJ+}
\begin{aligned}
    \frac{d}{de_1}J_{+}(e_1, 0) &= \frac{d}{de_1}\int_0^{e_1} 
    \frac{f\, df }{\sqrt{-\frac23 (f-e_1) f (f+ 6+ e_1)}}\\
    &= \frac{d}{de_1}\int_0^1 \frac{e_1 g \, dg}{\sqrt{\frac23 (1-g) g (e_1 g + 6 + e_1)}}\\
    &= \int_0^1 \frac{g(e_1 g + 6+ e_1) -\frac12 e_1 g (1+g)}{\sqrt{\frac23 (1-g) g (e_1 g +6 +e_1)^3}}\, dg \\
    & = \int_0^1 \frac{\frac12 e_1 g^2 + \frac12 e_1 g + 6g}{\sqrt{\frac23 (1-g) g (e_1 g + 6+ e_1)^3}} \, dg.   
\end{aligned}
\end{equation}
Hence it holds that
\begin{align}\label{nonunique-17}
    \lim_{e_1 \to 1^+} \frac{J_{+}(e_1, 0) - J_{+}(1, 0) }{e_1 -1} =  \int_0^1 \frac{ g^2 + 13 g }{2\sqrt{\frac23 (1-g)g (g+7)^3}} \, dg := C_5>0. 
\end{align}

Finally, when $(e_1, e_2)$ is close to $(1, 0)$, one has
\begin{equation}\label{nonunique-18}
\begin{aligned}
    |J_{-}(e_1, e_2)| = & \left|\int_{e_2}^0 \frac{f\, df }{\sqrt{\frac23(e_1 -f) (f-e_2) (f+6+ e_1 +e_2)}} \right| \\
    \leq & C \left| \int_{e_2}^0 \frac{f\, df }{\sqrt{f-e_2}} \right| 
    \le C ( \sqrt{-e_2} )^3. 
\end{aligned}
\end{equation}

Combining \eqref{nonunique-16}, \eqref{nonunique-17}, and \eqref{nonunique-18} gives 
\begin{align*}
\lim_{e_1\to 1^+ }\frac{J_{1,2}(e_1, e_2(e_1)) - J_{1, 2}(1, 0)}{e_1 -1} = C_5 >0.
\end{align*}
This completes the proof of Lemma \ref{Lemmanonunique-4}.
\end{proof}

\begin{proof}[Proof of Part (2) of Proposition \ref{Lemmatype12-5} $(m=1)$] Let $\bar{\alpha}= I_{+}(1, 0).$
Lemma \ref{Lemmanonunique-4} shows that
  \begin{equation}\label{new2}
      \Phi_{max}^{(1, 2)}(\bar{\alpha})>2 J_{1, 2}(1,0) =2 J_{+}(1, 0)= \Phi_{max}^{(1,0)}(\bar{\alpha})=\Phi_{max}^{(1,1)}(\bar{\alpha}). 
  \end{equation}
Meanwhile, as proved in Lemma \ref{Lemmatype12-2}, 
\begin{equation}\label{nonunique-19}
    \lim_{e_1\to +\infty} J_{1, 2}(e_1, e_2(e_1)) = - \infty. 
\end{equation}
Hence, {\bf $\Phi_{max}^{(1, 2)}(\bar{\alpha})$ can be attained. }

    Assume that $\Phi_{max}^{(1, 2)}(\bar{\alpha})$ is attained at $(e_1^0, e_2^0)$. We claim that $e_1^0>1$ and $e_2^0<0$. First, we prove $e_1^0>1$. Suppose $e_1^0<1$. Then $J_{+}(e_1^0, 0) < J_{+}(1, 0).$ Consequently, 
    \begin{equation*}
        \Phi_{max}^{(1, 2)}(\bar{\alpha})=2 J_{1, 2}(e_1^0, e_2^0) \leq 2 J_{+}(e_1^0, 0)< 2 J_{+}(1, 0) =\Phi_{max}^{(1, 0)}(\bar{\alpha}),
    \end{equation*}
which gives a contradiction. Suppose $e_1^0=1$ and $e_2^0<0$. Then according to \eqref{nonunique-4}, one has
\begin{equation*}
    I (e_1^0, e_2^0)> I (1, 0) =I_{+}(1, 0)=\bar{\alpha}.
\end{equation*}
 This also leads to  a contradiction. 
Hence $e_1^0 >1$. Next, we show $e_2^0<0$. Otherwise, $e_2^0=0$, and by Lemma \ref{monotone}, 
\begin{equation*}
    I_{1, 2}(e_1^0, e_2^0) = I_{+}(e_1^0, 0) < I_{+}(1, 0) =\bar{\alpha}. 
\end{equation*}

Given any $\Phi \in (\Phi_{max}^{(1, 1)}(\bar{\alpha}), \Phi_{max}^{(1, 2)}(\bar{\alpha}))$, by the intermediate value theorem, there exists an $e_1 \in (1, e_1^0)$ such that $2J_{1, 2} (e_1, e_2(e_1)) =\Phi$.  On the other hand, due to \eqref{nonunique-19} and  the intermediate value theorem, there exists another ${e}^\flat_1 \in (e_1^0, \infty)$ such that $2J_{1, 2} ({e}^\flat_1, e_2({e}^\flat_1)) =\Phi$. We finish the proof for the special case $\bar{\alpha} = I_{+}(1, 0)<\frac{\pi}{2}$. The general case $\alpha\in(0,\frac{\pi}{2})$ is similar and omitted. 

Next, we show that for $\frac{\pi}{2}\leq \alpha < \pi$, $\Phi_{max}^{(1, 2)}(\alpha)> \Phi_{max}^{(1, 1)}(\alpha)$ . The case $\alpha = \frac{\pi}{2}$ will be discussed in detail in Section \ref{sectionaperturedomain}. Here we focus on the case $\frac{\pi}{2}< \alpha < \pi$. In fact, we will show that 
\begin{equation}\label{nonunique-21}
    \Phi_{max}^{(1, 2)}(\alpha) \geq 0.
\end{equation}
As proved in Lemma \ref{Lemmatype12-1}, for every $e_1>0$, there exists a unique $e_2(e_1)\in (-3 - \frac12 e_1, 0)$ such that
$I_{1, 2}(e_1, e_2(e_1)) =\alpha.$ Following almost the same proof of Lemma \ref{Lemmanonunique-2}, we have $\displaystyle\lim_{e_1 \to 0^+} e_2(e_1) =0.$ (Following the proof of Lemma 6.5, one can obtain $\alpha=2I_-(0,-\delta)>\pi$, which leads to a contradiction.) Hence $\displaystyle \lim_{e_1\to 0^+} J_{1, 2}(e_1, e_2(e_1)) =0$. This implies \eqref{nonunique-21}.

Finally, we discuss the relation between $\Phi_{max}^{(1, 2)}(\alpha)$ and $\Phi_{max}^{(2, 2)}(\alpha)$. As indicated by \eqref{new1}, 
\begin{equation*}
    \Phi_{max}^{(1, 2)}(\alpha) \leq \Phi_{max}^{(1, 1)}\left(\frac{\alpha}{2}\right) = \frac12 \Phi_{max}^{(2, 2)}(\alpha).  
\end{equation*}
Now, the proof of Proposition \ref{Lemmatype12-5} is completed when $m=1$. 
\end{proof}

The proof of Proposition \ref{Lemmatype12-5} when $m\geq 2$ follows the same lines, so we omit the details.

\subsection{Type $(m+1, m)$ flows}
In this subsection, we consider the flows of type $(m+1, m), m\geq 1$. The main statement of this subsection is the following. 
\begin{pro}\label{Protype21}
   Consider a self-similar (SS) solution of type $(m+1,m)$ to the Navier-Stokes equations \eqref{SNS} with the non-slip boundary condition \eqref{NoslipBC} and the flux condition \eqref{eq:flux}.
  Assume that {$\alpha \in (0, \alpha^*] $ if  $m =1$, where
 $\alpha^*$ is explicitly defined in \eqref{eq:alpha^*}, and $\alpha\in (0,\pi)$ if $m\geq 2$.} The following statements hold.
 \begin{enumerate}
        \item There exists a maximum flux  $\Phi_{max}^{(m+1, m)}(\alpha)$, such that 
        \begin{itemize}
            \item if $\Phi < \Phi_{max}^{(m+1, m)}(\alpha)$,  there exists an SS solution of type $(m+1, m)$; 

            \item if $\Phi >\Phi_{max}^{(m+1, m)}(\alpha)$, there exists no SS solution of type $(m+1, m)$. 
        \end{itemize}

\item It holds that $\Phi_{max}^{(m+1, m) }(\alpha) \geq \Phi_{max}^{(m+1, m+1)}(\alpha).$
        \end{enumerate}
  
\end{pro}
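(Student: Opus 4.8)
The plan is to treat type $(m+1,m)$ flows as the mirror image of the type $(m,m+1)$ flows of Section \ref{sectiontype12}, interchanging the roles of outflow and inflow. By the symmetry about $\theta=0$ (now with an outflow bump in the centre), the angle and flux identities read, in analogy with \eqref{eq:I1,2},
\[
I_{m+1,m}(e_1,e_2)=(m+1)I_+(e_1,e_2)+mI_-(e_1,e_2)=\alpha,\qquad J_{m+1,m}(e_1,e_2)=(m+1)J_+(e_1,e_2)+mJ_-(e_1,e_2)=\tfrac{\Phi}{2},
\]
with $e_3=-6-e_1-e_2$ and $e_1>0>e_2>e_3$, where $I_\pm,J_\pm$ are defined in \eqref{angle}, \eqref{angle2}, \eqref{flux}, \eqref{flux2}. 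Writing $I=I_++I_-$ and $J=J_++J_-$ for the full-period quantities yields $\alpha=mI+I_+$ and $\tfrac{\Phi}{2}=mJ+J_+$, so each such flow is $m$ complete periods (a type $(m,m)$ flow of angle $mI$) plus one extra outflow bump of angle $I_+$ and flux $J_+$.

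For $m\ge 2$ (and for $m=1$ with $\alpha<\tfrac{\pi}{2}$) I expect the argument to close cleanly. Fixing $e_1$ and using the monotonicity of $I$ and $I_+$ in $e_2$ recorded in \eqref{nonunique-4} and Lemma \ref{monotone}, the angle equation determines a branch $e_2=e_2(e_1)$. The crucial point is that the degenerate configuration $e_2\to 0^-$ is \emph{admissible}, because the largest attainable angle as $e_1\to0^+$ is $mI_-(0,e_2)>m\tfrac{\pi}{2}\ge\pi>\alpha$ when $m\ge2$. As $e_2\to0^-$ the inflow bumps collapse, $I_+(e_1,0)\to\tfrac{\alpha}{m+1}$, hence $e_1\to e_1^*\!\big(\tfrac{\alpha}{m+1}\big)$ and the flux tends to $(m+1)\Phi_{max}^{(1,0)}\!\big(\tfrac{\alpha}{m+1}\big)=\Phi_{max}^{(m+1,m+1)}(\alpha)$ by Proposition \ref{pro:periodicflow2}. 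At the opposite end $e_2\to-\infty$, one checks $I_+\to0$, $\beta:=I\to\alpha$, and $\bar{\gamma}\to1$ for the full-period part, so by \eqref{periodic-3} and Lemma \ref{functionH} the full-period flux $2mJ\to-\infty$ while the extra bump contributes only $O(\sqrt{|e_2|})$; thus $J_{m+1,m}\to-\infty$. Continuity and the intermediate value theorem then give Part (1) with a finite $\Phi_{max}^{(m+1,m)}(\alpha)$, and since this maximum is the supremum of $2J_{m+1,m}$ along the constraint it dominates the boundary value, yielding $\Phi_{max}^{(m+1,m)}(\alpha)\ge\Phi_{max}^{(m+1,m+1)}(\alpha)$ of Part (2).

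The genuinely hard case is $m=1$ with $\alpha\ge\tfrac{\pi}{2}$, which is exactly why the hypothesis is $\alpha\in(0,\alpha^*]$. Here the maximal angle attainable as $e_1\to0^+$ for fixed $e_2$ is only $I_-(0,e_2)>\tfrac{\pi}{2}$, and, more seriously, for $e_2<0$ the outflow integral $I_+(\cdot,e_2)$ is \emph{no longer monotone} in $e_1$ (it increases from $0$, reaches a peak below $\tfrac{\pi}{2}$, then decreases). Consequently $I_{2,1}(\cdot,e_2)=\alpha$ may have two roots $e_1$, and the clean $e_2\mapsto e_1(e_2)$ parametrization used for type $(1,2)$ in Section \ref{sectiontype12} is unavailable, so $J_{2,1}$ need not be monotone along the solution set. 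Near the degenerate state with roots $(e_1^*(\tfrac{\alpha}{2}),0,\cdot)$ the flux still equals $\Phi_{max}^{(2,2)}(\alpha)=2\Phi_{max}^{(1,0)}(\tfrac{\alpha}{2})$; the plan is to show, by a local expansion of $J_{2,1}$ about this state in the spirit of Lemma \ref{Lemmanonunique-4}, that a nearby genuine type $(2,1)$ flow carries strictly larger flux, thereby placing the maximum of $2J_{2,1}$ at or above $\Phi_{max}^{(2,2)}(\alpha)$ and recovering Parts (1)--(2). The number $\alpha^*$ is precisely the threshold, implicitly defined in \eqref{eq:alpha^*}, up to which the sign of the relevant derivative (equivalently, the requisite branch structure of the constraint) can be verified.

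The main obstacle is therefore this $m=1$, $\alpha\in(\tfrac{\pi}{2},\alpha^*]$ regime: the non-monotonicity of $I_+(\cdot,e_2)$ destroys the single monotone parametrization on which the type $(1,2)$ analysis rests, so both the dichotomy and the comparison with $\Phi_{max}^{(2,2)}$ hinge on a quantitative study of $J_{2,1}$ near its interior maximum, together with the precise implicit definition of $\alpha^*$; controlling this only up to $\alpha^*$ (and not all of $(0,\pi)$) is the reason the exact threshold remains open. By contrast, the remaining ingredients — the upper bound on the flux via \eqref{new1} and Lemma \ref{functionH}, the divergence $J_{m+1,m}\to-\infty$ as $e_2\to-\infty$, the identification of the endpoint flux with $\Phi_{max}^{(0,1)}(\alpha)=\Phi_{max}^{(1,1)}(\alpha)$ from Proposition \ref{Lemmapureinflow2}, and the entire $m\ge2$ case — are routine adaptations of the arguments in Sections \ref{sectionperiodicflow} and \ref{sectiontype12}.
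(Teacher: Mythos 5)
Your proposal contains a genuine gap at its very foundation, and it is located exactly where you claim the argument is ``routine.'' You assert that for $m\ge 2$ (and for $m=1$, $\alpha<\frac{\pi}{2}$) the angle equation determines a clean branch $e_2=e_2(e_1)$ ``using the monotonicity of $I$ and $I_+$ in $e_2$ recorded in \eqref{nonunique-4} and Lemma \ref{monotone}.'' But these two monotonicities go in \emph{opposite} directions: by \eqref{nonunique-4} the full-period integral $I(e_1,\cdot)$ is strictly decreasing in $e_2$, while by Lemma \ref{monotone} the outflow integral $I_+(e_1,\cdot)$ is strictly increasing in $e_2$. Hence $I_{m+1,m}=mI+I_+$ is a sum of terms with competing monotonicity and is in fact \emph{not} monotone in $e_2$: its $e_2$-derivative blows up to $+\infty$ as $e_2\to 0^-$ (driven by $I_+$) and to $-\infty$ as $e_2\to(-3-\frac12 e_1)^+$ (driven by $I$). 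This is why type $(m+1,m)$ is structurally harder than type $(m,m+1)$, where $I_{1,2}=2I-I_+$ is monotone in $e_2$ (see \eqref{eq1859}). The paper's proof must therefore establish the convexity $\frac{\partial^2 I_{2,1}}{\partial e_2^2}>0$ (Lemma \ref{convexity}) and a careful level-set analysis (Lemma \ref{existencetype21}): for a fixed $e_1$ the equation $I_{2,1}(e_1,e_2)=\alpha$ can have \emph{two} roots $e_2^l(e_1)<e_2^r(e_1)$, and the level set is a folded curve running from $(e_1^*(\frac{\alpha}{m+1}),0)$ through a turning point $(\tilde e_1,\tilde e_2)$ out to $(+\infty,-\infty)$. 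This fold occurs for every $\alpha\in(0,\pi)$ and every $m\ge 1$, not only in the regime $m=1$, $\alpha\ge\frac{\pi}{2}$ that you single out. Consequently your intermediate-value argument, which presumes a single-valued parametrization by $e_1$, does not go through as stated; one must run the IVT along the connected folded curve and additionally verify that the supremum of $J_{m+1,m}$ over the whole admissible set is attained \emph{on} that curve (the paper does this by showing $2J_{2,1}(e_1,e_2)\le 4J_+(e_1,0)<\Phi_{max}^{(2,1)}(\alpha)$ whenever $e_1<e_1^*(\frac{\alpha}{2})$).

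Your diagnosis of the hard case is also off target. The obstruction for $m=1$, $\alpha\ge\frac{\pi}{2}$ is not a loss of monotonicity of $I_+(\cdot,e_2)$ in $e_1$, nor is $\alpha^*$ defined through the sign of a derivative of $J_{2,1}$ near an interior maximum. In the paper, $\alpha^*$ arises from the set $S=\{e_1:\ \inf_{e_2}I_{2,1}(e_1,e_2)\ge\alpha\}$: one sets $\tilde{\tilde e}_1(\alpha)=\sup S$ (or $0$ if $S=\emptyset$), proves $\tilde{\tilde e}_1$ is monotone in $\alpha$, and defines $\alpha^*$ by \eqref{eq:alpha^*} as the threshold below which $\tilde{\tilde e}_1(\alpha)>0$. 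The positivity of $\tilde{\tilde e}_1$ is exactly what guarantees that the folded level-set curve still starts at the point $(e_1^*(\frac{\alpha}{2}),0)$ with $e_2=0$ and reaches $(+\infty,-\infty)$, so that both the dichotomy in Part (1) and the endpoint comparison $\Phi_{max}^{(2,1)}(\alpha)\ge\Phi_{max}^{(2,2)}(\alpha)$ in Part (2) follow; for $m\ge 2$ the analogous quantity satisfies $\sup S^m\ge e_1^*(\frac{\alpha}{m})>0$ automatically, which is why no restriction on $\alpha$ is needed there. Your endpoint computations (the flux value $\Phi_{max}^{(m+1,m+1)}(\alpha)$ at $e_2=0$, and the divergence $J_{m+1,m}\to-\infty$) are consistent with the paper, but without the convexity/level-set machinery the existence claim for all $\Phi<\Phi_{max}^{(m+1,m)}(\alpha)$ is not established.
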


For type $(m+1, m)$ flows, the angle $\alpha$ and the half flux $\frac{\Phi}{2}$ are given by
\begin{equation}\label{type21-1}\nonumber
\alpha = I_{m+1,m} (e_1, e_2) := (m+1)I_{+}(e_1, e_2) + mI_{-}(e_1, e_2) , 
\end{equation}
\begin{equation}\label{type21-2}\nonumber
\frac{\Phi}{2}=  J_{m+1, m}(e_1, e_2): = (m+1)J_{+}(e_1, e_2) + mJ_{-}(e_1, e_2). 
\end{equation} 

To show Part (1) of Proposition \ref{Protype21}, we need to show that  every $\Phi\in (-\infty,\Phi_{max}^{(m+1,m)}(\alpha))$ can be attained by a flow of type $(m+1,m)$.  Since for each fixed $e_1$, there may be more than one $e_2$ such that $I_{m+1,m}(e_1,e_2)=\alpha$. Hence we need a careful analysis of the level sets of $I_{m+1, m}$. We start by
considering the flows of type $(2,1)$.
The next lemma shows the convexity of $I_{2,1}(e_1,e_2)$ with respect to $e_2$.

{
\begin{lemma}\label{convexity}
For every $e_1>0$, $-3-\frac12 e_1 < e_2 <0$, it holds that 
\begin{equation}\label{convexity-1}
    \frac{\partial^2 I_{2, 1}}{\partial e_2^2} (e_1, e_2) >0. 
\end{equation}
\end{lemma}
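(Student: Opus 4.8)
The plan is to exploit the algebraic identity $I_{2,1}=2I_{+}+I_{-}=I_{+}+I$, where $I=I_{+}+I_{-}=\int_{e_2}^{e_1}Q(f)^{-1/2}\,df$ is the full half-period integral of \eqref{periodic-01} and $Q(f)=-\tfrac23(f-e_1)(f-e_2)(f-e_3)$. Throughout, $e_1>0$ is fixed and the constraint \eqref{roots} forces $e_3=-6-e_1-e_2$, so $\partial_{e_2}e_3=-1$. A short computation then yields the two facts that drive everything:
\begin{equation*}
\partial_{e_2}Q=-\tfrac23(f-e_1)(e_3-e_2),\qquad \partial^2_{e_2}Q=\tfrac43(f-e_1).
\end{equation*}
The crucial feature is that both carry the factor $(f-e_1)$, which vanishes at the upper endpoint of $I_{+}$. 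I would \emph{not} try to differentiate $I_{-}$ directly: after regularizing its moving branch point $e_2$ (e.g.\ via $f=e_2(1-v^2)$) one finds $\partial^2_{e_2}I_{-}$ has no definite sign, in fact it is negative for $e_2$ near $0$. This is exactly why the split $I_{2,1}=I_{+}+I$ is the right one.

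First I would treat $I_{+}=\int_0^{e_1}Q^{-1/2}\,df$, whose limits are independent of $e_2$. Since $Q\sim(e_1-f)$ near $f=e_1$ while both $\partial_{e_2}Q$ and $\partial^2_{e_2}Q$ also vanish like $(e_1-f)$ there, the formally differentiated integrands $Q^{-5/2}(\partial_{e_2}Q)^2$ and $Q^{-3/2}\partial^2_{e_2}Q$ behave like $(e_1-f)^{-1/2}$, hence are integrable; this justifies differentiating twice under the integral sign (with a uniform integrable dominating function for $e_2$ in a neighbourhood). Carrying this out gives
\begin{equation*}
\partial^2_{e_2}I_{+}=\int_0^{e_1}\Big[\tfrac13(e_1-f)^2(e_3-e_2)^2\,Q^{-5/2}+\tfrac23(e_1-f)\,Q^{-3/2}\Big]\,df,
\end{equation*}
and on $(0,e_1)$ every factor is positive, so $\partial^2_{e_2}I_{+}>0$.

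The remaining task is to control $I$, where I would use the closed form \eqref{periodic-1}, namely $I=\sqrt6\,K(\bar{\gamma})/\sqrt{e_1-e_3}$ with $\bar{\gamma}^2=(e_1-e_2)/(e_1-e_3)$, rather than the singular integral. Writing $s:=e_1-e_3=2e_1+e_2+6$, the constraint \eqref{roots} gives the clean identity $s(1+\bar{\gamma}^2)=3(e_1+2)=:D$, so that $\bar{\gamma}^2=(D-s)/s$, $1-\bar{\gamma}^2=(2s-D)/s$, and $I=\sqrt6\,s^{-1/2}K(\bar{\gamma})$ is a function of $s$ alone. Since $s$ is affine in $e_2$ with unit slope, $\partial^2_{e_2}I=\partial^2_{s}I$, and the goal reduces to showing $I$ is \emph{convex} in $s\in(D/2,D]$. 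Differentiating $s^{-1/2}K(\bar{\gamma})$ twice and repeatedly using the identities $K'(\gamma)=\frac{E(\gamma)}{\gamma(1-\gamma^2)}-\frac{K(\gamma)}{\gamma}$ and $E'(\gamma)=\frac{E(\gamma)-K(\gamma)}{\gamma}$ recalled in the paper, one reduces $\partial^2_s I\ge 0$ to an inequality involving only $K(\bar{\gamma})$, $E(\bar{\gamma})$ and powers of $\bar{\gamma}$, which I would close using the bounds $1-\gamma^2<E/K<1-\gamma^2/2$ cited above. Combining the pieces then gives $\partial^2_{e_2}I_{2,1}=\partial^2_{e_2}I_{+}+\partial^2_{e_2}I>0$, the first summand being strictly positive and the second nonnegative.

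The main obstacle is this last convexity of the complete-elliptic-integral term. Unlike $\partial^2_{e_2}I_{+}$, whose integrand is term-by-term positive, $\partial^2_s I$ is a delicate combination of $K$ and $E$ with no manifest sign, and making the $E/K$ bounds sharp enough to conclude nonnegativity throughout $(D/2,D]$ is where the real work lies. As a consistency check I would also verify the endpoint behaviour: as $\bar{\gamma}\to1^-$ one has $I\sim-\tfrac12\log(1-\bar{\gamma}^2)\sim-\tfrac12\log(2s-D)$, which is convex in $s$, confirming that no sign change is being overlooked near $s=D/2$, while at $s=D$ ($\bar{\gamma}=0$) the value $I=\sqrt6\,D^{-1/2}\tfrac{\pi}{2}$ is finite and smooth.
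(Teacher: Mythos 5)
Your proposal has the same skeleton as the paper's proof: the same splitting $I_{2,1}=I_{+}+I$, the same term-by-term-positive second derivative for $I_{+}$ (your formula for $\partial^2_{e_2}I_{+}$ is, once $Q$ is written out, exactly the paper's), and the same reliance on the closed form $I=\sqrt6\,K(\bar{\gamma})/\sqrt{e_1-e_3}$ for the remaining term; your justification of differentiating twice under the integral sign near $f=e_1$ is a point the paper passes over in silence. The problem is that for the term $I$ you stop at a reduction: you rewrite $I$ as $\sqrt6\,s^{-1/2}K(\bar{\gamma}(s))$ with $s=e_1-e_3$ and assert that its convexity in $s$ reduces to an inequality in $K$, $E$ and powers of $\bar{\gamma}$, to be closed by the bounds $1-\gamma^2<E/K<1-\gamma^2/2$ --- and you yourself flag this step as "where the real work lies." Since positivity of $\partial^2_{e_2}I_{+}$ alone does not prove the lemma, the decisive step is missing: as written this is a plan, not a proof, and the gap sits exactly at the inequality the lemma is really about.

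The gap is fillable, and the paper's way of filling it is worth knowing because it is essentially one line. Carrying out your differentiation in $s$ (with your $D=3(e_1+2)$, so $\bar{\gamma}^2=D/s-1$) gives
\begin{equation*}
\partial_s^2 I=\sqrt6\,s^{-5/2}\left[\frac34 K(\bar{\gamma})+\frac{3D}{2\bar{\gamma}s}\,K'(\bar{\gamma})+\frac{D^2}{4\bar{\gamma}^2s^2}\left(K''(\bar{\gamma})-\frac{K'(\bar{\gamma})}{\bar{\gamma}}\right)\right],
\end{equation*}
so everything hinges on the single inequality $K''(\gamma)\geq K'(\gamma)/\gamma$. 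The paper proves precisely this (after the equivalent computation in the variable $e_2$, which is legitimate since $s$ is affine in $e_2$ with unit slope) directly from the Maclaurin series $K(\gamma)=\frac{\pi}{2}\sum_{n\geq 0}\left(\frac{(2n)!}{2^{2n}(n!)^2}\right)^2\gamma^{2n}$, because $2n(2n-1)\geq 2n$ termwise; no $E$--$K$ bounds are needed at all. Your proposed route also closes, but with more work: using $K'=\frac{E}{\gamma(1-\gamma^2)}-\frac{K}{\gamma}$ and $E'=\frac{E-K}{\gamma}$ one finds
$K''-\frac{K'}{\gamma}=\frac{2E(2\gamma^2-1)}{\gamma^2(1-\gamma^2)^2}+\frac{K(2-3\gamma^2)}{\gamma^2(1-\gamma^2)}$,
and the quantity $2E(2\gamma^2-1)+K(2-3\gamma^2)(1-\gamma^2)$ is bounded below by $K\gamma^2(1-\gamma^2)$ when $\gamma^2\geq\frac12$ (use $E>K(1-\gamma^2)$) and by $K\gamma^4$ when $\gamma^2<\frac12$ (use $E<K(1-\gamma^2/2)$), both positive. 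Either completion must actually be written out; until then the proof is incomplete precisely at its core.
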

\begin{proof}  
Recall that $\bar{\gamma}= \sqrt{\frac{e_1-e_2}{e_1-e_3}}\in [0,1)$, $I(e_1,e_2) = \frac{\sqrt{6}K(\bar{\gamma})}{\sqrt{e_1-e_3}}$ and $e_3=-6-e_1-e_2$. Straightforward computations  yield
\begin{equation}\label{eq:deriofIe2}
    \begin{aligned}
    \frac{\partial I }{\partial e_2}(e_1, e_2) 
    &= \sqrt{6}K'(\bar{\gamma})\frac{-\frac{3}{2}e_1-3}{(e_1-e_2)^{\frac{1}{2}}(6+2e_1+e_2)^{2}} - \frac{\sqrt{6}}{2}  K(\bar{\gamma})(6+2e_1+e_2)^{-\frac{3}{2}}<0
     \end{aligned}
    \end{equation}
   and
    \begin{equation*}
    \begin{aligned}
        \frac{\partial^2 I }{\partial e_2^2} (e_1, e_2) 
 &=  \frac{\sqrt{6}(\frac{3}{2}e_1+3)}{(e_1-e_2)(6+2e_1+e_2)^{\frac{7}{2}}} \left[K''(\bar{\gamma})\left(\frac{3}{2}e_1+3\right)  +  \frac{K'(\bar{\gamma})}{\bar{\gamma}} \left(-3+\frac{3}{2}e_1-3e_2\right)\right ]\\
       &\quad + \frac{3\sqrt{6}}{4}K(\bar{\gamma})(6+2e_1+e_2)^{-\frac{5}{2}}.
    \end{aligned}
\end{equation*}
Using
\begin{equation*}
    K(\gamma) = \frac{\pi}{2}\sum_{n=0}^{\infty}\left(\frac{(2n)!}{2^{2n}(n!)^2}\right)^2 \gamma^{2n}
\end{equation*}
yields
\begin{equation*}
    K''(\gamma) = \frac{\pi}{2}\sum_{n=1}^{\infty} 2n(2n-1)\left(\frac{(2n)!}{2^{2n}(n!)^2}\right)^2 \gamma^{2n-2}\geq \frac{ K'(\gamma) }{\gamma}.
\end{equation*}
This, together with 
$K'(\bar{\gamma})\geq 0$, $e_1>0$, and $e_2<0$, yields $$\left(-3+\frac{3}{2}e_1-3e_2\right) \frac{K'(\bar{\gamma})}{\bar{\gamma}} > -3\frac{K'(\bar{\gamma})}{\bar{\gamma}} \geq -3 K''(\bar{\gamma}).$$ 
Hence one has
\begin{equation}\label{eq:convex1}
    \frac{\partial^2 I }{\partial e_2^2} (e_1,e_2) \geq  \frac{3\sqrt{6}}{2}\frac{e_1(\frac{3}{2}e_1+3)}{(e_1-e_2)(6+2e_1+e_2)^{\frac{7}{2}}} K''(\bar{\gamma})+ \frac{3\sqrt{6}}{4}K(\bar{\gamma})(6+2e_1+e_2)^{-\frac{5}{2}}
    >0.
\end{equation}

 On the other hand, 
 direct computations yield that 
\begin{equation}\label{eq:deriofI+e2}
    \begin{aligned}
    \frac{\partial I_+(e_1,e_2)}{\partial e_2} 
=  \int_0^{e_1}  \frac{(3+ \frac12 e_1 + e_2) \, df}{\sqrt{-\frac23 (f-e_1)(f-e_2)^3 (f+ 6 +e_1 +e_2)^3}},  
    \end{aligned}
\end{equation}
and then
\begin{equation*}\label{eq:convex2}
    \begin{aligned}
    \frac{\partial^2 I_+(e_1,e_2)}{\partial e_2^2} = & \int_0^{e_1} \frac{3(3+ \frac12 e_1 + e_2)^2 \, df }{\sqrt{-\frac23 (f-e_1) (f-e_2)^5 (f+6+e_1 +e_2)^5}} \\
    &~\ \  + \int_0^{e_1}  \frac{df}{\sqrt{-\frac23 (f-e_1) (f-e_2)^3 (f+6+e_1 +e_2)^3}} >0.
    \end{aligned}
\end{equation*}
It follows from $I_{2,1}= I+I_+$ that the proof for \eqref{convexity-1} is completed. 
 \end{proof}


 \begin{lemma}\label{existencetype21}
     Assume that $0< \alpha < \frac{\pi}{2}$. 
     \begin{enumerate}
         \item For every $e_1> e_1^*(\frac{\alpha}{2})$, there is a unique $e_2 \in (-3-\frac12 e_1, 0)$, such that 
         $I_{2, 1}(e_1, e_2) = \alpha. $ 

         \item There exists an $\tilde{e}_1 (\alpha) \in (e_1^*(\alpha), e_1^*(\frac{\alpha}{2}))$, such that the following statements hold.
         \begin{itemize}
             \item For every $e_1 \in (\tilde{e}_1, e_1^*(\frac{\alpha}{2})]$, there are two $e_2$ such that $I_{2, 1}(e_1, e_2) = \alpha$. 

             \item There exists a unique $\tilde{e}_2(\alpha)$, satisfying 
             $I_{2, 1}(\tilde{e}_1(\alpha), \tilde{e}_2(\alpha) ) = \alpha. $
         \end{itemize}
     \end{enumerate}
 \end{lemma}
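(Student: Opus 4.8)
The plan is to fix $e_1$ and analyze the single‑variable slice $\phi(e_2):=I_{2,1}(e_1,e_2)$ on the admissible interval $e_2\in(-3-\tfrac12 e_1,0)$, exploiting the convexity already supplied by Lemma \ref{convexity}. First I would record the two boundary behaviors of $\phi$. As $e_2\to(-3-\tfrac12 e_1)^+$ the roots $e_2$ and $e_3$ collide, the integrand defining $I_-$ acquires a non-integrable singularity, and $\phi\to+\infty$. As $e_2\to 0^-$ one has $I_-(e_1,e_2)\to 0$ and $I_+(e_1,e_2)\to I_+(e_1,0)$, so $\phi\to 2I_+(e_1,0)=:M(e_1)$; writing $I_{2,1}=I+I_+$ and noting from \eqref{eq:deriofI+e2} that $\partial_{e_2}I_+\to+\infty$ as $e_2\to0^-$ (its integrand blows up like $f^{-3/2}$ at $f=0$) while $\partial_{e_2}I$ stays finite by \eqref{eq:deriofIe2}, we obtain $\partial_{e_2}\phi\to+\infty$ at the right endpoint. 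Combined with $\partial_{e_2}^2 I_{2,1}>0$ from Lemma \ref{convexity}, this shows $\phi$ is strictly convex, decreasing from $+\infty$ to a unique interior minimum $m(e_1):=\min_{e_2}\phi$ and then increasing up to $M(e_1)$.

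Part (1) is then immediate. For $e_1>e_1^*(\tfrac\alpha2)$ the monotonicity of $I_+(\cdot,0)$ (Lemma \ref{monotone}) and the definition \eqref{angle-10} give $M(e_1)=2I_+(e_1,0)<2I_+(e_1^*(\tfrac\alpha2),0)=\alpha$. Since $\alpha$ exceeds the right-endpoint value $M(e_1)$, the level $\alpha$ is attained only on the decreasing branch, and exactly once; this produces the unique $e_2\in(-3-\tfrac12 e_1,0)$ with $I_{2,1}(e_1,e_2)=\alpha$.

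For Part (2) I would track $m(e_1)$ as $e_1$ ranges over $(e_1^*(\alpha),e_1^*(\tfrac\alpha2)]$, framed by two clean endpoint estimates. Using $I_{2,1}=I+I_+$ together with the strict monotonicity of $I(e_1,\cdot)$ in $e_2$ from \eqref{eq:deriofIe2}, for every $e_1\le e_1^*(\alpha)$ and every admissible $e_2<0$ we have $I(e_1,e_2)>I(e_1,0)=I_+(e_1,0)\ge\alpha$ and $I_+(e_1,e_2)>0$, hence $\phi(e_2)>\alpha$ and $m(e_1)>\alpha$; in particular no type $(2,1)$ flow exists for $e_1\le e_1^*(\alpha)$. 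At the other end $M(e_1^*(\tfrac\alpha2))=\alpha$, and since the interior minimum lies strictly below the right-endpoint value, $m(e_1^*(\tfrac\alpha2))<\alpha$. Granting that $m$ is continuous and strictly decreasing on this range, the intermediate value theorem yields a unique $\tilde e_1(\alpha)\in(e_1^*(\alpha),e_1^*(\tfrac\alpha2))$ with $m(\tilde e_1)=\alpha$. For $e_1\in(\tilde e_1,e_1^*(\tfrac\alpha2))$ one has $m(e_1)<\alpha<M(e_1)$, so $\phi=\alpha$ has exactly one root on each branch (two solutions); as $e_1\uparrow e_1^*(\tfrac\alpha2)$ the root on the increasing branch migrates to the boundary $e_2=0$, the limiting degenerate case at the closed endpoint. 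At $e_1=\tilde e_1$ strict convexity forces the single solution $\tilde e_2(\alpha)=e_2^{\min}(\tilde e_1)$.

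The hard part is the monotonicity of the minimum value $m(e_1)$. By the envelope theorem $m'(e_1)=\partial_{e_1}I_{2,1}(e_1,e_2^{\min}(e_1))$, so it suffices to prove $\partial_{e_1}I_{2,1}<0$ along the minimizing locus. This cannot be obtained pointwise: although $\partial_{e_1}I_-<0$ as in Section \ref{Sectionpureinflow}, the substitution $f=e_1 g$ shows the integrand of $\partial_{e_1}I_+$ carries the factor $-e_2(6+e_2)-e_1^2 g(g+1)$, which is positive for $g$ near $0$, so $\partial_{e_1}I_+$ need not be negative and $\partial_{e_1}I_{2,1}$ is genuinely not of one sign on the whole admissible region. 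The resolution is to insert the first-order condition $\partial_{e_2}I_{2,1}=0$ holding at the minimizer into the expression for $\partial_{e_1}I_{2,1}$ and to estimate the resulting combination of complete and incomplete elliptic integrals; this elliptic-integral estimate is the technical core of the lemma. The continuity of $m$ and of $e_2^{\min}$ used above follows routinely from the strict convexity in $e_2$ and the implicit function theorem.
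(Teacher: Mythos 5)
Your setup (the strict convexity of $I_{2,1}(e_1,\cdot)$ from Lemma \ref{convexity}, the endpoint behavior of $I_{2,1}$ and of $\partial_{e_2}I_{2,1}$, the resulting decreasing-then-increasing profile with interior minimum $m(e_1)$), your proof of Part (1), and your two endpoint estimates for Part (2) (namely $m(e_1)>\alpha$ for $e_1\le e_1^*(\alpha)$ and $m\bigl(e_1^*(\tfrac{\alpha}{2})\bigr)<\alpha$) all coincide with the paper's argument. But Part (2) of your proposal has a genuine gap: the entire conclusion is made to rest on $m(e_1)$ being \emph{strictly decreasing}, and this is never proved. You yourself observe that $\partial_{e_1}I_{2,1}$ has no pointwise sign (the $\partial_{e_1}I_+$ integrand changes sign), so the claim cannot be waved through; the proposed repair — insert the first-order condition $\partial_{e_2}I_{2,1}=0$ at the minimizer and "estimate the resulting combination of complete and incomplete elliptic integrals" — is only announced, not carried out, and there is no evidence it is tractable. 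As written, the load-bearing step of Part (2) is missing.

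The instructive point is that the monotonicity of $m$ is not needed at all, and the paper avoids it. Define
\[
S=\Bigl\{e_1>0:\ \inf_{e_2\in(-3-\frac12 e_1,\,0)} I_{2,1}(e_1,e_2)\ \ge\ \alpha\Bigr\},\qquad \tilde{e}_1:=\sup S.
\]
Your first endpoint estimate shows $S\supset(0,e_1^*(\alpha)]$, so $S\neq\emptyset$; your second shows $e_1\notin S$ for $e_1\ge e_1^*(\tfrac{\alpha}{2})$, so $\tilde{e}_1<e_1^*(\tfrac{\alpha}{2})$. The defining property of the supremum then gives, with no monotonicity whatsoever, that $m(e_1)<\alpha$ for \emph{every} $e_1\in(\tilde{e}_1,e_1^*(\tfrac{\alpha}{2})]$ — which is exactly the statement you invoked strict monotonicity for — and since on that range the right-endpoint value satisfies $2I_+(e_1,0)\ge 2I_+\bigl(e_1^*(\tfrac{\alpha}{2}),0\bigr)=\alpha$ by Lemma \ref{monotone}, the V-shaped profile yields exactly two solutions $e_2$. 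Finally, continuity of $m$ (routine, as you note) squeezes $m(\tilde{e}_1)=\alpha$ from the two sides of $\tilde{e}_1$, giving the unique $\tilde{e}_2$ at the minimizer. Note also that the lemma asks only for the \emph{existence} of some $\tilde{e}_1$ with these properties; the uniqueness of $\tilde{e}_1$ that your IVT-plus-monotonicity route would deliver is not required, so replacing your intermediate value argument by the supremum definition completes the proof using only ingredients you have already established.
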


\begin{figure}[htb]
\begin{subfigure}[t]{0.39\textwidth}
 \centering
   \includegraphics[width=1\textwidth]{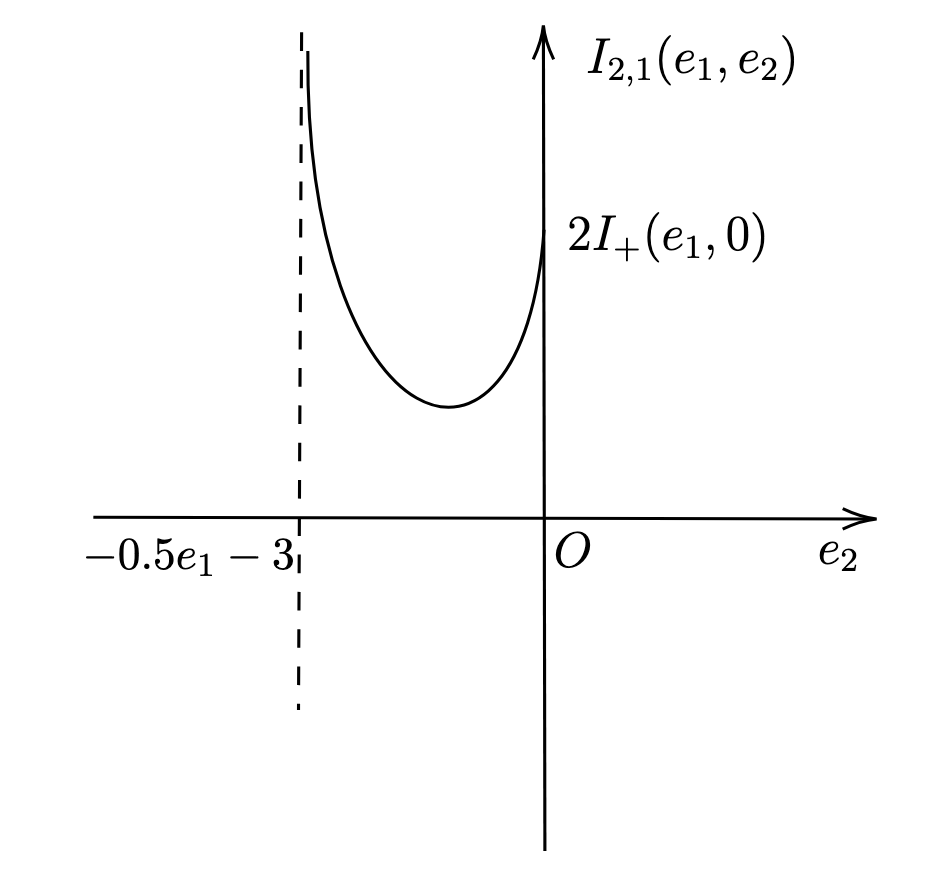}
			\caption{A possible plot of $I_{2,1}(e_1,e_2)$  \qquad with fixed $e_1$.}
   \label{fixed e1}
\end{subfigure}
  \hspace{0.3cm}
\begin{subfigure}[t]{0.46\textwidth}
 \centering
    \includegraphics[width=1\textwidth]{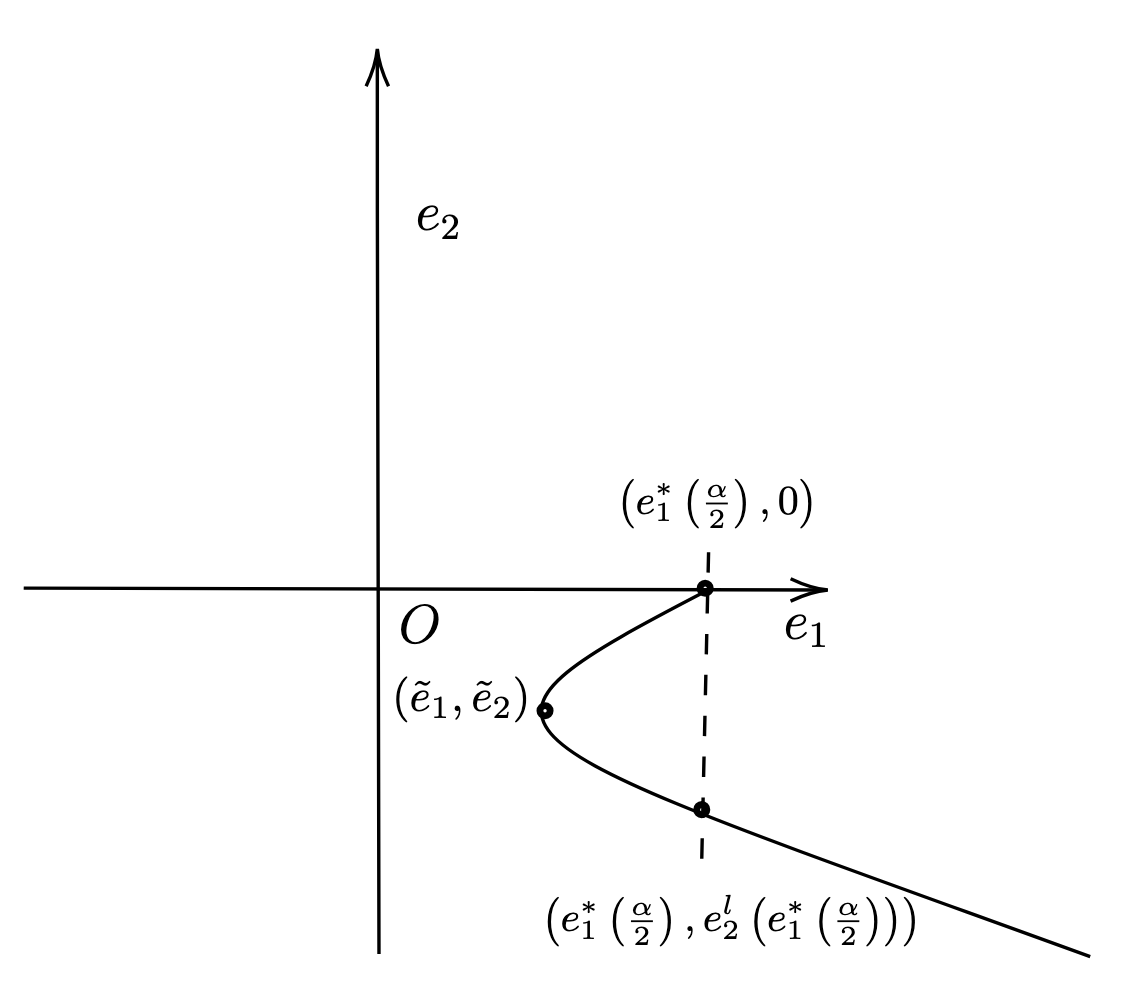}
			\caption{A connected component of the level set \qquad  $I_{2,1}(e_1,e_2)=\alpha$ with $0<\alpha<\frac{\pi}{2}$.}
   \label{connected component}
\end{subfigure}
\caption{The plot of $I_{2,1}(e_1,e_2)$ and level set of  $I_{2,1}(e_1,e_2)=\alpha$}
\end{figure}

 \begin{proof}
 \textbf{Step 1}.
 We begin with the following observations.
 For every $e_1>0$, it follows from \eqref{eq:deriofIe2} and \eqref{eq:deriofI+e2} that 
 \begin{equation*}
 \begin{aligned}
     \lim_{e_2\to 0^-} \frac{\partial I}{\partial e_2}(e_1,e_2)  &= \sqrt{6}K' \left(\sqrt{\frac{e_1}{6+2e_1}}\right) \frac{-\frac{3}{2}e_1-3}{e_1^{\frac{1}{2}}(6+2e_1)^2} -\frac{\sqrt{6}}{2}K \left(\sqrt{\frac{e_1}{6+2e_1}}\right)(6+2e_1)^{-\frac{3}{2}},\\
      \lim_{e_2\to 0^-}\frac{\partial I_+}{\partial e_2}(e_1,e_2) & = 
      \lim_{e_2 \to 0^-} \int_0^{e_1} \frac{(3+\frac12 e_1 +e_2) \, df}{\sqrt{-\frac23 (f-e_1) (f-e_2)^3 (f+6+e_1+e_2)^3}} =+\infty.
 \end{aligned}     
 \end{equation*}
Hence one has
$$
    \lim_{e_2\to 0^-}\frac{\partial I_{2,1}}{\partial e_2} =\lim_{e_2\to 0^-}\frac{\partial I}{\partial e_2}+\lim_{e_2\to 0^-}\frac{\partial I_+}{\partial e_2}=+\infty.$$
On the other hand,  as $e_2 \to (-3 - \frac12 e_1)^+$, $e_2 - e_3 \to 0$, and thus $\bar{\gamma} \to 1$. It then follows from \eqref{eq:deriofIe2} and \eqref{eq:deriofI+e2} that
\begin{equation*}
    \lim_{e_2\to (-3 -\frac{1}{2}e_1)^+} \frac{\partial I }{\partial e_2}
    = \lim_{\bar{\gamma}\to1}-\sqrt{6}K'(\bar{\gamma})\left(\frac{3}{2}e_1+3\right)^{-\frac32} - \frac{\sqrt{6}}{2}  K(\bar{\gamma})\left(\frac{3}{2}e_1+3\right)^{-\frac{3}{2}}=-\infty
    \end{equation*}
    and
    \begin{equation*}
 \lim_{e_2\to (-3 -\frac{1}{2}e_1)^+} \frac{\partial I_+(e_1,e_2)}{\partial e_2} =0,
      \end{equation*}
      respectively.
Hence one has
$$\lim_{e_2\to (-3 -\frac{1}{2}e_1)^+} \frac{\partial I_{2, 1 } }{\partial e_2}
    =-\infty.$$
For a fixed $e_1$, the plot of $I_{2,1}(e_1,e_2)$ is illustrated by Figure \ref{fixed e1}.

    \textbf{Step 2}. According to Step 1 and Lemma \ref{convexity}, for every $e_1>0$, there exists a unique ${e}^\sharp_2 (e_1) \in (-3 -\frac12 e_1, 0)$, such that 
    \begin{equation*}
     I_{2, 1}(e_1, {e}^\sharp_2(e_1) = \inf \left\{ I_{2, 1}(e_1, e_2): \ -3-\frac12 e_1 <e_2 \leq 0 \right\}.   
    \end{equation*}
   $I_{2, 1}(e_1, e_2)$ is strictly decreasing on $(-3 -\frac12 e_1, {e}^\sharp_2]$ and strictly increasing on $[{e}^\sharp_2, 0]$.  

\textbf{Step 3}. Assume that $e_1> e_1^*(\frac{\alpha}{2})$. According to Lemma \ref{monotone}, 
\begin{equation*}
    \lim_{e_2 \to 0^-} I_{2, 1}(e_1, e_2) =  2I_{+} (e_1, 0) < 2 I_{+} \left(e_1^* (\frac{\alpha}{2}), 0\right) = \alpha. 
\end{equation*}
On the other hand, 
\begin{equation*}
    \lim_{e_2 \to (-3-\frac12 e_1)^+ } I_{2, 1}(e_1, e_2) = \infty. 
\end{equation*}
Due to Step 2, there exists a unique $e_2 = e_2(e_1)\in (-3 -\frac12 e_1, {e}^\sharp_2(e_1))$, such that $I_{2, 1}(e_1, e_2) = \alpha$. Moreover, it holds that 
\begin{equation*}
    \frac{\partial I_{2, 1} }{\partial e_2}(e_1, e_2(e_1)) <0. 
\end{equation*}
 
    \textbf{Step 4}. 
Let 
\begin{equation*}
    S = \left\{ e_1>0: \ \ \inf_{e_2\in(-3-\frac{1}{2}e_1,\, 0)}I_{2,1}(e_1,e_2) \geq \alpha  \right\}. 
\end{equation*}
    First, we claim that $S$ is not empty. If $e_1 <  e_1^*(\alpha)$, then
    \begin{equation*}
      \inf_{e_2\in(-3-\frac{1}{2}e_1,\, 0)}  I_{2, 1}(e_1, e_2) \geq \inf_{e_2\in(-3-\frac{1}{2}e_1,\, 0)}  I(e_1, e_2) \geq I_{+}(e_1, 0) > I_{+}(e_1^*(\alpha), 0) =\alpha, 
    \end{equation*}
    where we have applied Lemma \ref{monotone} and the fact that $ I(e_1, e_2)$ is decreasing with respect to $e_2$ due to \eqref{eq:deriofIe2}. Hence, $(0, e_1^*(\alpha))\subset S$ and $S$ is not empty. 

     Let $\tilde{e}_1= \sup S\in [ e_1^*(\alpha),  e_1^*(\frac{\alpha}{2})) $. 
     The fact $\tilde{e}_1<e_1^*(\frac{\alpha}{2})$ can be seen from
      if $e_1 \geq  e_1^*(\frac{\alpha}{2})$, then
    \begin{equation*}
      \inf_{e_2\in(-3-\frac{1}{2}e_1,\, 0)}  I_{2, 1}(e_1, e_2) <  I_{2, 1}(e_1, 0)  =2I_{+}(e_1, 0) \leq 2 I_{+}(e_1^*\left(\frac{\alpha}{2}\right), 0) =\alpha. 
    \end{equation*}
     It follows from the definition of $\tilde{e}_1$ that if $e_1\in (\tilde{e}_1, e_1^*(\frac{\alpha}{2})]$, then
       $$\inf_{e_2\in(-3-\frac{1}{2}e_1,\, 0)}I_{2,1}(e_1,e_2) < \alpha .$$ Due to Step 2, there are exactly two $e_2$ such that $I_{2,1}(e_1,e_2)=\alpha$, which we denote by $e_2^l(e_1)$ and $e_2^r (e_1) $ respectively, $e_2^l(e_1) < {e}^\sharp_2(e_1) < e_2^r(e_1)$. Moreover, it holds that 
       \begin{equation*}
        \frac{\partial I_{2, 1} }{\partial e_2}(e_1, e_2^l(e_1)) <0, \ \ \ \ \ \ \ \frac{\partial I_{2, 1} }{\partial e_2}(e_1, e_2^r(e_1)) >0. 
       \end{equation*}

       \textbf{Step 5}. If $e_1  =\tilde{e}_1$, then $\inf_{e_2\in(-3-\frac{1}{2}e_1,\, 0)}I_{2,1}(e_1,e_2)=\alpha$. There is a unique $\tilde{e}_2$, such that $I_{2,1}(\tilde{e}_1, \tilde{e}_2)=\alpha$. It holds that 
       \begin{equation*}
            \frac{\partial I_{2, 1} }{\partial e_2}(\tilde{e}_1, \tilde{e}_2(e_1)) =0.
       \end{equation*}
This finishes the proof of the lemma.       
\end{proof}

It follows from the results in Lemma \ref{existencetype21} and the implicit function theorem that  a component of the
  level set  $\{(e_1,e_2): I_{2,1}(e_1,e_2)=\alpha\}$ is a continuous curve starting from $(e_1^*(\frac{\alpha}{2}),0)$, which can be extended to $(\tilde{e}_1,\tilde{e}_2)$.  Furthermore, a component of the
  level set  $\{(e_1,e_2): I_{2,1}(e_1,e_2)=\alpha\}$ can start from $(e_1^*(\frac{\alpha}{2}),e_2^l(e_1^*(\frac{\alpha}{2})))$, which is extended to $(\tilde{e}_1,\tilde{e}_2)$ on the left and approach  $(+\infty,-\infty)$ on the right. In summary,   there is a continuous curve in the level set of $\{(e_1,e_2): I_{2,1}(e_1,e_2)=\alpha\}$ starting from $(e_1^*(\frac{\alpha}{2}),0)$ and approaches to $(+\infty,-\infty)$. The corresponding plot is given in Figure \ref{connected component}.

\begin{lemma}\label{Lemmatype21-2}
Assume that $e_1 > e_1^* (\frac{\alpha}{2})$ and $e_2=e_2(e_1)$ is the unique root satisfying $I_{2, 1}(e_1, e_2) = \alpha. $ Then 
\begin{equation} \nonumber
    \lim_{e_1\rightarrow \infty} J_{2, 1}(e_1, e_2(e_1)) = -\infty. 
\end{equation}
\end{lemma}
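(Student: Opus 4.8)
The plan is to reinterpret the triple $(e_1,e_2,e_3)$ underlying a type $(2,1)$ flow as a single period of a type $(1,1)$ flow over an enlarged half-angle, which makes the explicit elliptic-function identities \eqref{periodic-1}, \eqref{periodic-3}, \eqref{periodic-4} available. Concretely I would set $\beta := I(e_1,e_2(e_1)) = I_+(e_1,e_2(e_1)) + I_-(e_1,e_2(e_1))$. Since $I_{2,1}=2I_++I_-=I_++I$ with $I_+>0$ and $I_-\geq 0$, the constraint $I_{2,1}(e_1,e_2(e_1))=\alpha$ forces
\[
\tfrac{\alpha}{2}\leq \beta<\alpha,
\]
so $\beta$ stays bounded and bounded away from $0$. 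Writing $\bar{\gamma}=\sqrt{(e_1-e_2)/(e_1-e_3)}\in[0,1)$ and recalling that the identities were derived purely from the roots and the full-period integral (so they apply verbatim with $\alpha$ replaced by $\beta$), \eqref{periodic-4} and \eqref{periodic-3} give
\[
e_1+2=\frac{2}{\beta^2}(\bar{\gamma}^2+1)K(\bar{\gamma})^2,
\qquad
J(e_1,e_2(e_1)) = \frac{2\bigl(H(\bar{\gamma})-\beta^2\bigr)}{\beta}.
\]

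First I would show $\bar{\gamma}\to 1^-$ as $e_1\to\infty$: since $\bar{\gamma}^2+1\leq 2$ and $\beta\geq\frac{\alpha}{2}$, the first identity yields $K(\bar{\gamma})^2\geq \frac{\alpha^2}{16}(e_1+2)\to\infty$, which forces $\bar{\gamma}\to 1^-$, and in particular $H(\bar{\gamma})\to-\infty$ by Lemma \ref{functionH}. The crucial point is to quantify the rate of decay of $J$. Using $\bar{\gamma}^2-2\leq -1$ and $E(\bar{\gamma})\leq \frac{\pi}{2}$, one has $H(\bar{\gamma})=(\bar{\gamma}^2-2)K^2+3EK\leq -K^2+\frac{3\pi}{2}K$, so for $e_1$ large (hence $K$ large) $H(\bar{\gamma})\leq -\frac12 K(\bar{\gamma})^2\leq -\frac{\alpha^2}{32}e_1$. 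Combined with $\frac{\alpha}{2}\leq\beta<\alpha$, the second identity then gives
\[
J(e_1,e_2(e_1)) = \frac{2(H(\bar{\gamma})-\beta^2)}{\beta}\leq -\frac{\alpha}{16}\,e_1
\]
for all sufficiently large $e_1$; that is, $|J|$ grows at least linearly in $e_1$.

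It then remains to absorb the positive outflow contribution. Writing $J_{2,1}=2J_++J_-=J+J_+$, the monotonicity of $J_+$ in $e_2$ (Lemma \ref{monotone}) together with $e_2(e_1)<0$ gives $J_+(e_1,e_2(e_1))\leq J_+(e_1,0)$, while \eqref{eq:asy2} shows $J_+(e_1,0)=O(\sqrt{e_1})$. Hence
\[
J_{2,1}(e_1,e_2(e_1)) = J + J_+ \leq -\frac{\alpha}{16}e_1 + C\sqrt{e_1}\longrightarrow -\infty,
\]
which is the assertion.

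The hard part is precisely the competition in the last display. Unlike the type $(1,2)$ case, where $J_{1,2}=J+J_-\leq J$ allowed one to discard the extra term, here $J_+\to+\infty$ and cannot simply be dropped. The resolution is that the period/inflow contribution $|J|$ blows up \emph{linearly} in $e_1$, whereas the outflow term $J_+$ grows only like $\sqrt{e_1}$; thus the crude one-sided bounds above already suffice and no sharp constants are needed. Obtaining the linear lower bound on $|J|$ — via $K(\bar\gamma)^2\gtrsim e_1$ feeding into $H(\bar\gamma)\lesssim -e_1$ — is the key quantitative step.
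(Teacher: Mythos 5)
Your proof is correct, but it takes a genuinely different route from the paper's own proof of this lemma. The paper argues via root asymptotics: using $I_{+}(e_1,e_2)\leq I_{+}(e_1^*(\frac{\alpha}{4}),0)=\frac{\alpha}{4}$ for large $e_1$, hence $I_{-}\geq \frac{\alpha}{2}$, it deduces $0<e_2-e_3\leq \frac{24}{\alpha^2}$, so the roots $e_2,e_3$ merge and $|e_2(e_1)|/e_1\to \frac12$, $e_3/e_2\to 1$; it then bounds $J_{+}\leq C\sqrt{e_1}$ by scaling and shows $|J_{-}|/\sqrt{-e_2}\to\infty$ directly from the rescaled integral, whose integrand behaves like $(1-g)^{-1}$ near $g=1$ precisely because $e_3/e_2\to 1$. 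You instead recycle the elliptic-identity machinery of Section 4 (the identities \eqref{periodic-4} and \eqref{periodic-3} with the half-angle $\beta=I(e_1,e_2(e_1))\in[\frac{\alpha}{2},\alpha)$), which is exactly the device the paper uses for the type $(1,2)$ companion Lemma \ref{Lemmatype12-2}, and you upgrade it with a quantitative rate: $K(\bar{\gamma})^2\gtrsim e_1$ forces $H(\bar{\gamma})\leq -\frac12 K^2\lesssim -e_1$, hence $J\lesssim -e_1$ linearly, which dominates $J_{+}=O(\sqrt{e_1})$. Both proofs rest on the same dominance of the negative contribution over the $O(\sqrt{e_1})$ outflow term; the paper's route yields the sharper structural picture of where the roots go, while yours gives an explicit linear decay rate for $J$ with less computation, and your diagnosis of the key difficulty --- that unlike the $(1,2)$ case one cannot simply drop the extra term since $J_{+}\to+\infty$ --- is exactly the obstruction the paper's proof is designed to overcome. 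One small citation slip: the monotonicity of $J_{+}(e_1,\cdot)$ in $e_2$ is not part of Lemma \ref{monotone} (which concerns $I_{+}$); it is established in the proof of Proposition \ref{Lemmapureoutflow2}, where $J_{+}(e_1,e_2)$ is shown to be increasing in $e_2\in[-3-\frac12 e_1,0]$ by the same comparison argument. Since that fact is available in the paper, this does not affect the validity of your argument.
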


\begin{proof} 
When $e_1 \geq  e_1^*(\frac{\alpha}{4})$, according to Lemma \ref{monotone},  one has
\begin{equation*}
    I_{+}(e_1, e_2)\leq I_{+}(e_1, 0) \leq I_{+}\left(e_1^*\left(\frac{\alpha}{4}\right), 0\right)= \frac{\alpha}{4}, 
\end{equation*}
and consequently, 
\begin{equation*}
   I_{-}(e_1, e_2(e_1)) \geq \frac{\alpha}{2}. 
\end{equation*}
On the other hand, for $e_1 > 6$, 
\begin{equation}\nonumber
    e_1 > 3+ \frac12 e_1 \geq - e_2.
\end{equation}
Therefore, if $e_1 > \max\{ e_1^*(\frac{\alpha}{4}), \ 6\}, $ then 
\begin{align*}
    \frac{\alpha}{2} \leq I_{-}(e_1, e_2) & = \int_{e_2}^0 \frac{df}{\sqrt{\frac23(e_1 - f)(f-e_2) (f-e_3)}} \\
    & \leq \int_{e_2}^0 \frac{df}{\sqrt{\frac23 (-e_2) (f -e_2) (f-e_3)}}\\
    & = \int_0^1 \frac{dg}{\sqrt{\frac23 (1-g) (e_2 g - e_3)}} \leq \frac{\sqrt{6}}{\sqrt{e_2 -e_3}}. 
\end{align*}
This implies
\begin{equation}\nonumber
    0<e_2 - e_3 \leq \frac{24}{\alpha^2}. 
\end{equation}
Hence it follows from $e_3=-6-e_1-e_2$ that one has
\begin{equation}\label{type21-5}
    \lim_{e_1\to \infty}\frac{|e_2(e_1)|}{e_1} =\frac12 \ \ \  \ \mbox{and}\ \ \ \lim_{e_1\to \infty} \frac{e_3(e_1)}{e_2(e_1)}= 1.  
\end{equation}
Let us compare the sizes of $J_{+}(e_1, e_2)$ and $J_{-}(e_1, e_2)$ when $e_1$ is large. Clearly,
\begin{align*}
    J_{+}(e_1, e_2)& = \int_{0}^{e_1} \frac{f\, df}{\sqrt{-\frac23 (f -e_1)(f-e_2)(f-e_3)}}  \\
    & = \int_0^1 \frac{\sqrt{e_1} g\, dg}
    {\sqrt{\frac23 (1-g) ( g- \frac{e_2}{e_1}) ( g - \frac{e_3}{e_1})}} \leq C \sqrt{e_1}. 
\end{align*}
Meanwhile, due to \eqref{type21-5}, one has
\begin{align}\label{type21-6}
   \lim_{e_1 \to \infty} \frac{ |J_{-}(e_1, e_2)| }{\sqrt{-e_2}} & = \lim_{e_1 \to \infty} \int_0^1 \frac{g \, dg }{\sqrt{\frac23 (1-g) (g-\frac{e_1}{e_2}) (\frac{e_3}{e_2}- g)}} = +\infty. 
\end{align}
Combining  \eqref{type21-5}-\eqref{type21-6}, 
it holds that 
\begin{equation}\label{eq:negainf}
J_{2, 1}(e_1, e_2) = 2J_{+}(e_1, e_2) + J_{-}(e_1, e_2)\to -\infty, \ \ \ \ \ \mbox{as}\ e_1\to +\infty. 
 \end{equation}
 This finishes the proof of the lemma.
 \end{proof}

Now we are ready to prove Proposition \ref{Protype21}.
  \begin{proof}[Proof of Proposition \ref{Protype21}] 
  (i) Proof of Part (1). 
  We first consider the case  $m=1$. The flux has a natural upper bound $2e_1 \alpha $. This, together with \eqref{eq:negainf}, yields that there exists a maximum flux $\Phi_{max}^{(2,1)}(\alpha)$, which is the supremum of the flux and can be attained by a type (2,1) flow. 
  
  For $\alpha\in (0, \frac{\pi}{2})$, we will show that any flux $\Phi \in (-\infty, \Phi_{max}^{(2,1)}(\alpha)]$ can be achieved on the connected component in the level set $\{(e_1,e_2): I_{2,1}(e_1,e_2)=\alpha\}$ plotted in Figure \ref{connected component}. It suffices to show that $\Phi_{max}^{(2,1)}(\alpha)$ cannot be attained if $e_1<e_1^*(\frac{\alpha}{2})$. Indeed, as shown in the proof of Proposition \ref{Lemmapureoutflow2}, $J_+(e_1,e_2)$ is strictly increasing with respect to $e_2\in (-\frac{1}{2}e_1 -3,0)$ and $J_+(e_1,0)$ is strictly increasing with respect to $e_1 \in (0,+\infty)$. Hence if $e_1<e_1^*(\frac{\alpha}{2})$, then 
  \begin{equation*}
      J_{2,1}(e_1,e_2)\leq 2J_+(e_1,e_2)\leq 2J_+(e_1,0)<2J_+\left(e_1^*\left(\frac{\alpha}{2}\right),0\right) \leq \frac12 \Phi_{max}^{(2,1)}(\alpha).
  \end{equation*}

For $\alpha \in [\frac{\pi}{2}, \pi)$, define 
    \begin{equation*}
    \begin{aligned}
          S:& =\left\{e_1\in \left(0,e_1^*\left(\frac{\alpha}{2}\right)\right): \inf_{e_2\in(-3-\frac{1}{2}e_1,0)}I_{2,1}(e_1,e_2) \geq \alpha\right\}. \\
    \end{aligned}     
    \end{equation*}
    Let $\tilde{\tilde{e}}_1 (\alpha) = \sup S$ if $S$ is not empty. Otherwise, let $\tilde{\tilde{e}}_1(\alpha)=0$. 
    It is clear that $\tilde{\tilde{e}}_1(\alpha)<e_1^*\left(\frac{\alpha}{2}\right)$.
         If  $ \tilde{\tilde{e}}_1>0$, the above proof  for the case $\alpha \in (0,  \frac{\pi}{2})$ applies. Indeed, the level set  $I_{2,1}(e_1,e_2)=\alpha$ has a connected component, as plotted as in Figure \ref{connected component}. Hence, any flux $\Phi \in (-\infty, \Phi_{max}^{(2,1)}(\alpha)]$ can be achieved on this component.
      
    We claim that
    \begin{equation}\label{eq:relatisize}
        \tilde{\tilde{e}}_1(\beta)\geq \tilde{\tilde{e}}_1(\alpha), \ \ \ \ \mbox{if }\  0<\beta\leq \alpha< \pi.
    \end{equation}
    It follows from the definition of $ \tilde{\tilde{e}}_1$  that for any $e_1\in(\tilde{\tilde{e}}_1(\alpha), e_1^*\left(\frac{\alpha}{2}\right))$, 
    \begin{equation*}
        \inf_{e_2\in(-\frac{1}{2}e_1-3,0)}I_{2,1}(e_1,e_2)<\alpha.
    \end{equation*}
    If $\tilde{\tilde{e}}_1(\beta)< \tilde{\tilde{e}}_1(\alpha)$, then $\tilde{\tilde{e}}_1(\alpha)\in  (\tilde{\tilde{e}}_1(\beta), e_1^*\left(\frac{\beta}{2}\right))$, where we have used $\tilde{\tilde{e}}_1(\alpha)<e_1^*\left(\frac{\alpha}{2}\right)\leq e_1^*\left(\frac{\beta}{2}\right)$. Hence it follows that
    \begin{equation*}
   \alpha=\inf_{e_2\in(-\frac{1}{2}\tilde{\tilde{e}}_1(\alpha)-3,0)}I_{2,1}(\tilde{\tilde{e}}_1(\alpha),e_2)<\beta\leq \alpha.
    \end{equation*}
    This contradiction implies that \eqref{eq:relatisize} must  hold. With the aid of \eqref{eq:relatisize},  there exists a unique $\alpha^*\in [\frac{\pi}{2}, \pi)$, such that
    \begin{equation}\label{eq:alpha^*}
        \tilde{\tilde{e}}_1(\alpha)>0 \textrm{ if $\alpha<\alpha^*$ and }  \tilde{\tilde{e}}_1(\alpha)=0 \textrm{ if $\alpha^*\leq \alpha<\pi$}.  
    \end{equation}
     It follows from \eqref{eq:alpha^*} that the proof of Proposition \ref{Protype21} for the case $m=1$ is completed.

If $m\geq 2$, then $$\alpha=I_{m+1,m} = (m+1)I_+ + m I_- = mI + I_+.$$ 
It follows from \eqref{eq:convex1} and \eqref{eq:convex2}  that
 $\frac{\partial^2 I_{m+1, m}}{\partial e_2^2} (e_1, e_2) >0$. Next,  define 
\begin{equation*}
    S^m = \left\{ e_1>0: \ \ \inf_{e_2\in(-\frac{1}{2}e_1-3,0)}I_{m+1,m}(e_1,e_2) \geq \alpha  \right\}. 
\end{equation*}
Similar to Step 4 in the proof of Lemma \ref{existencetype21}, we can show that $\tilde{e}_1=\sup S^m \in \left[e_1^*\left(\frac{\alpha}{m}\right), e_1^*\left(\frac{\alpha}{m+1}\right)\right) $.
Hence, $\tilde{e}_1>0$, and one can go through the proof above to conclude that
there is a continuous curve in the level set of $\{(e_1,e_2): I_{m+1,m}(e_1,e_2)=\alpha\}$ starting from $(e_1^*(\frac{\alpha}{m+1}),0)$ and can be extended to $(+\infty,-\infty)$ such that
any $\Phi \in (-\infty, \Phi_{max}^{(m+1,m)}(\alpha)]$ can be achieved on this curve.

(ii) Proof of Part (2). 
Recall that for $m\geq 1$, the maximum flux $\Phi_{max}^{(m+1, m+1)}(\alpha)$ is attained if $(e_1,e_2)=( e_1^*(\frac{\alpha}{m+1}), 0)$. It can also be considered a limiting case of the type $(m+1, m)$ flow. Hence it holds that 
\begin{equation*}
    \Phi_{max}^{(m+1, m)}(\alpha) \geq \Phi_{max}^{(m+1, m+1)}(\alpha).
\end{equation*}
Therefore, the proof for Proposition \ref{Protype21} is completed. 
\end{proof}
 }

\section{Flows with Maximum Fluxes and Further Discussions} 
\label{sum}

This section is mainly devoted to the analysis for the case with maximum fluxes. Our proofs of  Theorems \ref{2Dresult_2} and \ref{main2_2} in fact provide more results than the statements of the two theorems. Theorem \ref{2Dresult_2} pertains to the existence and non-existence of  solution when $\Phi\neq \Phi^{(m_+,m_-)}_{max}(\alpha)$. However, our proofs also explain which type of solution exists when $\Phi=\Phi^{(m_+,m_-)}_{max}(\alpha)$.
\begin{theorem}
\label{limitc}
            \begin{enumerate}
            \item If $\Phi=\Phi^{(1,0)}_{max}(\alpha),0<\alpha<\frac{\pi}{2}$, then there exists an SS solution $\Bu$ of type $(1,0)$ such that $f(\theta):=r\,u^r$ satisfies 
                \begin{equation}
                \label{spc_pro1}
            f'(\alpha)=f'(-\alpha)=0.
                \end{equation}
            \item 
                $
                \Phi^{(0,1)}_{max} (\alpha) 
                \begin{cases}
                    =0, & \text{if }0<\alpha \leq \frac{\pi}{2},
                    \\
                    <\frac{\pi^2}{\alpha}-4\alpha, & \text{if } \frac{\pi}{2} <\alpha <\pi.
                \end{cases}
                $
            \item 
            If $\Phi= \Phi^{(0,1)}_{max}(\alpha), \frac{\pi}{2}<\alpha<\pi$, then there exists an SS solution $\Bu$ of type (0,1) such that $f(\theta):=r\,u^r$ satisfies \eqref{spc_pro1}.
            \item
            $
            \Phi^{(1,1)}_{max} (\alpha)
            =
            \begin{cases}
                \Phi^{(1,0)}_{max} (\alpha), & \text{if }  0<\alpha < \frac{\pi}{2},
                \\
                \Phi^{(0,1)}_{max} (\alpha), & \text{if }\frac{\pi}{2}\leq \alpha < \pi.
            \end{cases}
            $
            \item 
            If $\Phi=\Phi^{(m,m)}_{max}(\alpha),m\geq 2, 0<\alpha<\pi$, then there exists an SS solution $\Bu$ of type $(m,0)$ such that
            $f(\theta)=r\,u^r$ satisfies \eqref{spc_pro1}.

            \item If $\Phi = \Phi_{max}^{(1,2)}(\alpha)$, $0< \alpha \leq \frac{\pi}{2}$, then there exists an SS solution $\Bu$ of type $(1, 2)$.

            \item If $\Phi = \Phi_{max}^{(m,m+1)}(\alpha)$, $m\geq 2$, $0< \alpha < \pi $, then there exists an SS solution $\Bu$ of type $(m, m+1)$.
        \end{enumerate}
\end{theorem}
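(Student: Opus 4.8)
The plan is to dispatch the seven assertions of Theorem~\ref{limitc} one at a time, exploiting that each is the critical-flux endpoint of an analysis already completed for $\Phi\neq\Phi^{(m_+,m_-)}_{max}$ in Sections~\ref{Sectionpureoutflow}--\ref{sec:Flows of other types}; the only genuinely new input is the description of the solution exactly at the root configuration attaining the maximum flux. For parts (1) and (3), I would recall that the pure-outflow maximum flux $\Phi^{(1,0)}_{max}(\alpha)=2J_+(e_1^*(\alpha),0)$ (see \eqref{eq:maxifluxpureout}) is attained at the triple $(e_1^*(\alpha),0,-6-e_1^*(\alpha))$ with $I_+(e_1^*(\alpha),0)=\alpha$, and that the pure-inflow maximum flux for $\frac{\pi}{2}<\alpha<\pi$ (see \eqref{periodic-10}) is attained at $(0,e_2^*(\alpha),-6-e_2^*(\alpha))$. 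In the first case the wall value of $f$ coincides with the root $e_2=0$ of the cubic $Q$, and in the second with the root $e_1=0$; since the first integral reads $(f')^2=Q(f)$ and $Q$ vanishes at its roots, one gets $f'(\pm\alpha)^2=Q(0)=0$, which is precisely \eqref{spc_pro1}. Parts (2) and (4) are then bookkeeping: part (2) combines the definition $\Phi^{(0,1)}_{max}(\alpha):=0$ for $\alpha\le\frac{\pi}{2}$ with the identity $\Phi^{(0,1)}_{max}=\Phi^{(1,1)}_{max}$ and the bound $\Phi^{(1,1)}_{max}<\pi^2/\alpha-4\alpha$ from Proposition~\ref{Lemmaperiodicflow1}(3) (itself a consequence of \eqref{periodic-3} and $H\le\pi^2/4$ in Lemma~\ref{functionH}), while part (4) is Proposition~\ref{Lemmaperiodicflow1}(1)--(2) together with Proposition~\ref{Lemmapureinflow2}.

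For part (5) the idea is gluing. Using $\Phi^{(m,m)}_{max}(\alpha)=m\,\Phi^{(1,0)}_{max}(\alpha/m)$ from Proposition~\ref{pro:periodicflow2}, the critical type $(m,m)$ configuration is, on each sub-interval of length $2\alpha/m$, exactly the critical pure-outflow bump of part (1): $f$ rises from $0$ to $e_1$ and returns to $0$ with vanishing slope at both ends. I would concatenate $m$ translated copies of this bump on $[-\alpha,\alpha]$. At each of the $m-1$ interior junctions one has $f=0$ and $f'=0$, so the glued function is $C^1$; since the roots, and hence the constant $b$ in $f''=-f^2-4f+b$, are common to all copies, $C^1$ matching forces $C^2$ matching, and bootstrapping gives an analytic solution of \eqref{eq:ODEsys}. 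As $f\ge 0$ vanishing only at the $m-1$ interior points and at $\pm\alpha$, it has $m$ outflow intervals and no inflow interval, i.e.\ it is of type $(m,0)$, and \eqref{spc_pro1} holds at the wall. This identifies the degenerate limit of type $(m,m)$ flows as a type $(m,0)$ flow whose flux is $m\,\Phi^{(1,0)}_{max}(\alpha/m)=\Phi^{(m,m)}_{max}(\alpha)$.

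For parts (6) and (7) the point is attainment of the supremum. Along the connected component of the level set $\{I_{m,m+1}(e_1,e_2)=\alpha\}$ parametrized by $e_1$ (Lemmas~\ref{Lemmatype12-1} and \ref{existencetype21} and their $m\ge2$ analogues), the flux $J_{m,m+1}(e_1,e_2(e_1))$ is continuous, tends to $-\infty$ as $e_1\to\infty$ by Lemma~\ref{Lemmatype12-2}, and is bounded above by \eqref{new1}; hence its supremum $\tfrac12\Phi^{(m,m+1)}_{max}(\alpha)$ is attained at some finite $(e_1^0,e_2^0)$. It then remains to check that the attaining configuration is nondegenerate, namely $e_1^0>0$ and $e_2^0<0$ strictly, so that it is a genuine type $(m,m+1)$ flow rather than a degenerate type $(m,0)$ or $(m,m)$ one. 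For $m=1$, $0<\alpha<\frac{\pi}{2}$, this follows from Lemma~\ref{Lemmanonunique-4}, which yields the strict inequality $\Phi^{(1,2)}_{max}(\alpha)>\Phi^{(1,0)}_{max}(\alpha)$ and thereby forces $e_2^0<0$ at the maximizer; the case $m\ge2$ is identical with $I_+$ replaced by $mI_++(m+1)I_-$. The borderline $\alpha=\frac{\pi}{2}$ in part (6), where $e_1^*(\alpha)=0$ and the outflow part also threatens to flatten, I would obtain by a limiting argument from $\alpha<\frac{\pi}{2}$ (monotonicity of $\Phi^{(1,2)}_{max}$ and continuity of the roots), consistent with the aperture-domain discussion in Section~\ref{sectionaperturedomain}.

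The main obstacle I expect is exactly this last non-degeneracy step in parts (6)--(7): one must exclude the scenario that the flux supremum is approached only along a sequence on which $e_2\to 0$, i.e.\ the solution flattening into a pure type $(m,0)$ flow, and the strict separation $\Phi^{(m,m+1)}_{max}(\alpha)>\Phi^{(m,m)}_{max}(\alpha)$ --- equivalently the positivity of the limit computed in Lemma~\ref{Lemmanonunique-4} --- is what rules this out and guarantees that the maximizer genuinely realizes type $(m,m+1)$.
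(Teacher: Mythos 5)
Your treatment of Parts (1)--(5), Part (7), and Part (6) for $0<\alpha<\frac{\pi}{2}$ coincides in substance with the paper's own proof: the paper's proof of Theorem \ref{limitc} is essentially a list of pointers to the earlier sections, and the arguments you reconstruct are exactly the ones it points to --- for Parts (1)/(3), the wall value $0$ coincides with a root of $Q$, so $(f')^2=Q(0)=0$ (the paper cites \eqref{eq:maxifluxpureout} with $e_2=0$ and Step 2 of the proof of Proposition \ref{Lemmapureinflow2} with $e_1=0$); for Part (2), the bound \eqref{eq:j-} and Proposition \ref{Lemmapureinflow2}; for Part (4), Proposition \ref{Lemmaperiodicflow1}; for Part (5), the $m$-fold repetition of the critical pure-outflow bump (the paper phrases this through the scaling relations \eqref{eq1410}--\eqref{eq1414} rather than your explicit $C^1$-gluing, but both produce the same type $(m,0)$ solution, and your gluing argument is sound because $f''=-f^2-4f+b$ makes $C^1$ matching automatically $C^2$); and for Parts (6)--(7), attainment via the upper bound \eqref{new1} and Lemma \ref{Lemmatype12-2}, with nondegeneracy of the maximizer forced by the strict inequality of Lemma \ref{Lemmanonunique-4}. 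You also correctly identify nondegeneracy of the maximizer as the crux of Parts (6)--(7).

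The one genuine gap is the borderline case $\alpha=\frac{\pi}{2}$ in Part (6). Your proposed limiting argument from $\alpha<\frac{\pi}{2}$ does not work as stated: (i) monotonicity of $\Phi^{(1,2)}_{max}$ in $\alpha$ is established nowhere in the paper; (ii) more seriously, the lower bound you would pass to the limit, $\Phi^{(1,2)}_{max}(\alpha)>\Phi^{(1,1)}_{max}(\alpha)=8\left(\frac{\pi}{2}-\alpha\right)+o\left(\frac{\pi}{2}-\alpha\right)$, degenerates to $0$ as $\alpha\to\frac{\pi}{2}$, so the limit only yields $\Phi^{(1,2)}_{max}\left(\frac{\pi}{2}\right)\geq 0$; and (iii) the maximizers $(e_1^0(\alpha),e_2^0(\alpha))$ carry no a priori compactness --- since $e_1^*(\alpha)\to 0$ they could collapse to $(0,0)$, in which case the limit is the trivial solution rather than a type $(1,2)$ flow. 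What is actually needed is the strict inequality $\Phi^{(1,2)}_{max}\left(\frac{\pi}{2}\right)>0=\Phi^{(1,1)}_{max}\left(\frac{\pi}{2}\right)$, i.e., strict positivity of the flux somewhere on the level-set curve $I_{1,2}(e_1,e_2(e_1))=\frac{\pi}{2}$. This is exactly what the paper's Lemma \ref{Lemmahalfspace-2} supplies, namely $J_{1,2}(e_1,e_2(e_1))=\frac{\pi}{2}e_1+o(e_1)>0$ for small $e_1$; with it in hand, attainment at an interior, genuinely type $(1,2)$ point $(e_1^0,e_2^0)$ follows from the same continuity-and-decay argument you use for $\alpha<\frac{\pi}{2}$. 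In your write-up, Section \ref{sectionaperturedomain} is invoked only as being ``consistent with'' the limiting argument, but Lemma \ref{Lemmahalfspace-2} is the essential ingredient there, not a consistency check.
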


\begin{remark}
According to Parts (1), (3), (5), if $\Phi=\Phi^{(m_+,m_-)}_{max}(\alpha)$, there exists a solution of a given type with the special property \eqref{spc_pro1}. Given that, Parts (2) and (4) have the following implications.
    \begin{enumerate}[(a)]
        \item (Type (0,1), $0<\alpha \leq \frac{\pi}{2}$) In this case, if $\Phi=\Phi^{(0,1)}_{max}(\alpha)(=0)$, this maximum flux is obviously achieved by the trivial solution $\Bu\equiv 0$. 
        
        \item  (Type (0,1), $\frac{\pi}{2}<\alpha <\pi$) In this case, the trivial solution is no longer a limiting case of type $(0,1)$ due to Parts (2) and (3), but instead, there exists a (non-trivial) SS solution of type $(0,1)$ with  \eqref{spc_pro1}, that achieves $\Phi^{(0,1)}_{max}(\alpha)$.

        \item (Type (1,1), $0<\alpha<\frac{\pi}{2}$)  In this case, according to Parts (1) and  (4), if $\Phi=\Phi^{(1,1)}_{max}(\alpha)$, then there exists an SS solution of type $(1,0)$ with  \eqref{spc_pro1}, which achieves $\Phi^{(1,1)}_{max}(\alpha)$. Considering \eqref{spc_pro1}, this solution of type $(1,0)$ in fact can be considered as a limiting case of type $(1,1)$  when $0<\alpha<\frac{\pi}{2}$.

        \item (Type (1,1), $\frac{\pi}{2}\leq \alpha <\pi$) Similarly, according to Parts (3) and (4), 
        if $\Phi=\Phi^{(1,1)}_{max}(\alpha)$, then there exists an SS solution of type $(0,1)$ with  \eqref{spc_pro1}. In fact, this SS solution of type $(0,1)$ can be considered a limiting case of flows of type $(1,1)$  in the case $\alpha \in [\frac{\pi}{2}, \pi)$.

        \item (Type (m,m), $m\geq 2$, $0<\alpha <\pi$) Even in this case, according to Part (5), the SS solution of type $(m,0)$ with \eqref{spc_pro1}, that achieves $\Phi^{(m,m)}_{max}(\alpha)$, can be considered a limiting case of type $(m,m)$.
    \end{enumerate}
\end{remark}

{
On the other hand, for $\alpha \in (0, \frac{pi}{2})$,  according to Part (6) of Theorem \ref{limitc}, the maximal flux of flows of type $(1,2)$ is achieved at some points $(e_1,e_2)$ satisfying $e_1 \not=0$ and $e_2\not =0$. Hence this flow does not satisfy \eqref{spc_pro1}. One can get a similar conclusion for type $(m,m+1)$ according to Part (7).
}

\begin{proof}[Proof of Theorem \ref{limitc}]
As Theorem \ref{limitc} is in fact already proved in the proofs of  Theorems \ref{2Dresult_2} and \ref{main2_2}, we just indicate where each part of Theorem \ref{limitc} is proved. Part (1) was proved in \eqref{eq:maxifluxpureout} where $e_2=0$.
Part (2) for the case $0<\alpha \leq \frac{\pi}{2}$ was proved in \eqref{eq:j-}, and Part (2) for the case $\frac{\pi}{2}<\alpha<\pi$ as well as Part (3) were proved in Part (2) of the proof of Proposition \ref{Lemmapureinflow2}. Part (4) for the case $0<\alpha< \frac{\pi}{2}$ was proved near \eqref{eq1312} in  the proof of Proposition \ref{Lemmaperiodicflow1}, while Part (4) for the case $\frac{\pi}{2}\leq \alpha < \pi$ was also proved in the proof of Proposition \ref{Lemmaperiodicflow1}. Lastly, using \eqref{eq1410} and \eqref{eq1414}, one can see that the proof for Part (5) is similar to that for Part (4). Hence the proof for Part (5) is in fact omitted. Part (6) was proved in the proof  of Proposition \ref{Lemmatype12-5} for the case $m=1$. The proof for  Part (7) is similar to that for Part (6).  
\end{proof}

\section{Asymptotic behavior of solutions to  2D NS in an aperture domain}\label{sectionaperturedomain}
As we mentioned in Section \ref{secintroduction}, 
Galdi, Padula, and Solonnikov (cf. \cite{GaldiPadulaSolonnikov96}) investigated the Navier-Stokes system \eqref{SNS} in an aperture domain defined in \eqref{defapture}.
It was proved in \cite{GaldiPadulaSolonnikov96} that there exists at least a symmetric solution $\Bu$, i.e, it satisfies
 \begin{align}
    \label{sym_22}
        u_1(x_1, -x_2)=u_1(x_1, x_2), \ \ u_2(x_1, -x_2)=- u_2 (x_1, x_2), 
    \end{align}
    which 
decays at large distances with rate $|\Bx|^{-1}$ when $|\Phi|$ is small. Moreover, it is interesting that the leading term of the solution in the far field is a Jeffery-Hamel solution. 
More precisely, the following theorem was established in \cite{GaldiPadulaSolonnikov96}. 
\begin{theorem}\label{TheromeGPS}\cite[Lemma 5.1, Theorem 5.1]{GaldiPadulaSolonnikov96}
\begin{enumerate}[(1)]
\item (Jeffery-Hamel solutions)
     If $|\Phi| < \frac{1}{36}$, there exists a solution of the Navier-Stokes system \eqref{SNS} in the half-plane $H =H_{\pm}$ defined by $H_\pm =\{ x\in \mathbb{R}^2: \pm x_1>0\} $ such that 
    \begin{equation}\label{H1}
    \begin{aligned}
        \Bu =\frac{f(\theta)}{r} \Be_r,
        \quad
         & \int_{-\frac{\pi}{2}}^{\frac{\pi}{2}} f(\theta)\, d \theta = \Phi, \ \ \ f\left(\pm \frac{\pi}{2}\right) = 0, \ \ \ f(\theta)= f(-\theta) \textrm{ in } H_{\pm},\\
          \end{aligned}
    \end{equation}
    Here, when considering \eqref{H1} on $H_-$, we make the abuse of the notation that the negative $x_1$-axis corresponds to $\theta=0$.
    Moreover, it holds that
    \begin{equation}\label{H3}
         \max_{\theta\in [-\frac{\pi}{2}, \frac{\pi}{2}]} |f(\theta)| \leq 6 |\Phi|,
    \end{equation}
    and $\displaystyle \max_{\theta\in [-\frac{\pi}{2}, \frac{\pi}{2} ]}|f^\prime (\theta)| \leq 28 |\Phi|. $ The function $f$ is unique in the class of solutions satisfying \eqref{H1} and \eqref{H3}. 
\item (Asymptotic behavior)
    Let the aperture
domain $\Omega$ be as in \eqref{defapture}. Then there exists a $\Phi_1>0$ such that if $|\Phi|< \Phi_1$, the stationary Navier-Stokes system \eqref{SNS} with the following conditions
  \begin{equation*}
  \begin{array}{ll}
        \displaystyle \Bu=0 \ \ \ \mbox{on}\ \partial \Omega, \ \ \ \int_{-d}^d u_1(0,x_2) \, dx_2 =\Phi, \ \ \ \lim_{|x|\to \infty} \Bu(x)= 0,
    \end{array} 
    \end{equation*}
    admits at least one symmetric solution  $\Bu \in W_{loc}^{1, 2}(\overline{\Omega})$ satisfying \eqref{sym_22},  $\nabla \Bu\in L^2(\Omega)$, 
    and 
    \begin{equation*}
        \sup_{x\in \Omega^R} |x|^{1+\sigma} |\Bu(x) - \bar{\Bu}(x)| \leq  c(\sigma,R)\Phi <\infty. 
    \end{equation*}
    Here $\sigma \in (0,1)$, $\Omega^R:=\Omega \setminus B_R$, $R>2d$ and
    \begin{equation*}        
   \bar{\Bu}(x) = \left\{ \begin{array}{l} \bar{\Bu}_{+}(x), \ \ \ x\in H_+, \\
     \bar{\Bu}_{-}(x), \ \ \ x\in H_-, \end{array}
    \right. 
     \end{equation*}     
     and $\bar{\Bu}_{\pm}$ is the Jeffery-Hamel flow in $ H_{\pm}$ corresponding to  fluxes $\pm \Phi$, given  in Part (1).
\end{enumerate}
\end{theorem}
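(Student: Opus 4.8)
The plan is to prove the two parts in sequence, first establishing existence, uniqueness, and the quantitative bounds for the Jeffery--Hamel profile in each half-plane (Part (1)), and then using this profile as the leading-order ansatz to construct and control the full aperture-domain solution via a fixed-point argument in a weighted space (Part (2)).

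For Part (1), I would reduce the half-plane problem (the case $\alpha=\frac{\pi}{2}$) to the ODE boundary value problem already derived in Section \ref{sec:2DC}: seek an even $f$ on $(-\frac{\pi}{2},\frac{\pi}{2})$ solving $f''=-f^2-4f+b$ with $f(\pm\frac{\pi}{2})=0$ and $\int_{-\pi/2}^{\pi/2} f\,d\theta=\Phi$, where $b$ is the free constant pinned down by the flux. The crucial linear fact is that the linearization at $f=0$, namely $g''+4g=b$ with Dirichlet data, is uniquely solvable on the even subspace: the even solutions of the homogeneous equation $g''+4g=0$ are multiples of $\cos 2\theta$, which does not vanish at $\pm\frac{\pi}{2}$, so the homogeneous even Dirichlet problem is trivial and the operator is invertible. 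This yields the explicit leading-order profile $f\approx \frac{\Phi}{\pi}(1+\cos 2\theta)$, with $b=\frac{4\Phi}{\pi}$. I would then set up a contraction mapping (equivalently, apply the implicit function theorem at $\Phi=0$ in the variables $(f,b)$), treating the quadratic term $-f^2$ as a small perturbation; the invertibility just established gives, for $|\Phi|$ small, a unique small even solution, which is exactly the uniqueness assertion in the class \eqref{H1}--\eqref{H3}. The bounds $\max|f|\le 6|\Phi|$ and $\max|f'|\le 28|\Phi|$ would then follow either by inserting the leading-order term into the fixed-point iteration and estimating the correction, or by a maximum-principle/energy argument using the first integral \eqref{eq:f'square}.

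For Part (2), the idea is to take $\bar{\Bu}$ to equal the Jeffery--Hamel flow $\bar{\Bu}_{\pm}$ on $H_{\pm}$ (with fluxes $\pm\Phi$) and to seek the genuine solution as $\Bu=\chi\bar{\Bu}+\Bw$, where $\chi$ is a smooth cutoff equal to $1$ outside a large ball and vanishing near the aperture, so that $\chi\bar{\Bu}$ carries the correct far-field behavior and satisfies the no-slip and divergence-free conditions away from the transition region. Substituting into \eqref{SNS} produces a perturbed Navier--Stokes system for $\Bw$ with a forcing $\BF$ supported in the transition region (plus commutator terms from $\chi$) and a small prescribed flux. I would solve for $\Bw$ in a weighted space encoding the decay $|\Bw(x)|\lesssim |x|^{-(1+\sigma)}$, e.g.\ with norm $\sup_x |x|^{1+\sigma}|\Bw(x)|$ together with the matching condition $\nabla\Bu\in L^2(\Omega)$. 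Granting invertibility of the linearized (Stokes) operator between the appropriate weighted spaces with the required decay gain, a Banach fixed-point argument closes for $|\Phi|$ small, since the nonlinearity $\Bw\cdot\nabla\Bw$ and the cross terms $\bar{\Bu}\cdot\nabla\Bw+\Bw\cdot\nabla\bar{\Bu}$ are contractions on a small ball once $\BF$ and $\bar{\Bu}$ are $O(\Phi)$. The estimate $\sup_{x\in\Omega^R}|x|^{1+\sigma}|\Bu-\bar{\Bu}|\le c(\sigma,R)\Phi$ is then read off from the weighted norm of $\Bw$, and the symmetry \eqref{sym_22} is preserved by working in the symmetric subspace throughout.

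The main obstacle is the linear theory in Part (2): showing that the Stokes resolvent in the aperture domain is an isomorphism between weighted spaces capturing exactly the $|x|^{-(1+\sigma)}$ decay, uniformly enough to absorb the nonlinearity. This requires analyzing the Green's matrix both near the aperture (a reentrant-corner-type region) and at infinity, where the half-space Stokes asymptotics and the flux-carrying mode must be matched, and controlling the pressure while respecting the flux constraint without spoiling the decay. In the interest of brevity I would import these weighted linear estimates in the aperture geometry from the Stokes theory underlying \cite{GaldiPadulaSolonnikov96} rather than reprove them, and concentrate the new work on the ODE analysis of Part (1) and the fixed-point closure.
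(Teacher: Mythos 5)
The first thing to say is that the paper does not prove this statement at all: Theorem \ref{TheromeGPS} is quoted, with citation, from \cite[Lemma 5.1, Theorem 5.1]{GaldiPadulaSolonnikov96} and is used in Section \ref{sectionaperturedomain} purely as a black box --- the paper's genuinely new contribution there is Proposition \ref{Prophalfplane}, which identifies the \emph{types} of the limiting Jeffery--Hamel profiles, not the existence/asymptotics themselves. So there is no in-paper proof to compare against; your proposal can only be measured against the original Galdi--Padula--Solonnikov argument and its own internal coherence.

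On that score, your Part (1) sketch is essentially sound and close to the classical argument. The reduction to $f''=-f^2-4f+b$ with Dirichlet data and flux constraint is exactly the paper's Section \ref{sec:2DC}; your linearization is correct, and your restriction to the even subspace is not merely convenient but \emph{necessary}: at $\alpha=\frac{\pi}{2}$ the odd function $\sin 2\theta$ solves the homogeneous linearized problem ($g''+4g=0$, $g(\pm\frac{\pi}{2})=0$, zero flux), so the full linearized operator is singular and the implicit function theorem applies only after imposing $f(\theta)=f(-\theta)$ --- which is precisely why the theorem phrases uniqueness within the symmetric class. Your leading profile $\frac{\Phi}{\pi}(1+\cos 2\theta)$ with $b=\frac{4\Phi}{\pi}$ is correct. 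One quantitative gap remains: uniqueness is asserted in the whole class $\max|f|\le 6|\Phi|$, so you must show your contraction ball contains that entire class for $|\Phi|<\frac{1}{36}$; this requires tracking explicit constants through the iteration rather than invoking an abstract implicit function theorem, and it is where the specific thresholds $\frac{1}{36}$, $6$, $28$ in \cite{GaldiPadulaSolonnikov96} originate. For Part (2), your cutoff-plus-weighted-fixed-point scheme is the standard strategy (modulo the need for a Bogovskii-type correction, since $\div(\chi\bar{\Bu})=\nabla\chi\cdot\bar{\Bu}\neq 0$ in the transition annulus, which your mention of ``commutator terms'' only partially covers), but as written it is circular: you propose to import the weighted linear Stokes estimates in the aperture geometry ``from the Stokes theory underlying \cite{GaldiPadulaSolonnikov96}'' --- that is, to prove the GPS theorem by assuming the hard part of the GPS proof. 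That is defensible only if, like the paper, one treats the entire theorem as imported, in which case no proof was called for in the first place.
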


In this section, we give a clear description of the type of Jeffery-Hamel solution that appeared in Theorem \ref{TheromeGPS} as follows. 
\begin{pro}\label{Prophalfplane}
  For $\Phi \in (0, \frac{1}{36})$, the  unique Jeffery-Hamel solution $\bar{\Bu}_{+}$ satisfying \eqref{H1}-\eqref{H3} indicated in Part (2) of Theorem \ref{TheromeGPS} is an SS solution of type $(1, 2)$. Meanwhile, there exists a constant $\Phi_0 \in (0, \frac{1}{36})$, for any $\Phi \in (-\Phi_0, 0)$, the unique Jeffery-Hamel solution   $\bar{\Bu}_{-}$ in Part (2) of Theorem \ref{TheromeGPS} satisfying \eqref{H1}-\eqref{H3} is an SS solution of type $(0, 1)$. 
\end{pro}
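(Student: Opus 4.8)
The plan is to leverage the uniqueness of the Jeffery--Hamel solution in the class \eqref{H1}--\eqref{H3} guaranteed by Part (1) of Theorem \ref{TheromeGPS}. In the half--plane we have $\alpha=\frac{\pi}{2}$, and it suffices, for each sign of $\Phi$, to exhibit \emph{one} self-similar solution of the claimed type that is symmetric, vanishes at $\theta=\pm\frac{\pi}{2}$, carries the prescribed flux, and satisfies the amplitude bound $\max_\theta|f(\theta)|\le 6|\Phi|$. Since $\bar{\Bu}_{\pm}$ is the unique element of this class, any such solution must coincide with $\bar{\Bu}_{\pm}$, which reads off its type. Thus the whole proof reduces to constructing small-amplitude solutions of the right type and verifying \eqref{H3}.

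For $\Phi<0$ I would use the pure inflow. By Part (1) of Proposition \ref{Lemmapureinflow2}, for $\alpha=\frac{\pi}{2}$ and every $\Phi<0$ there is a unique (hence symmetric) type $(0,1)$ solution, determined by $I_-(e_1,e_2)=\frac{\pi}{2}$ and $2J_-(e_1,e_2)=\Phi$, with $e_1\ge 0\ge e_2$ and $\max_\theta|f|=|e_2|$. The remaining task is quantitative: expanding $I_-$ and $J_-$ as $e_2\to 0^-$, with $e_1\to 0^+$ coupled through $I_-=\frac{\pi}{2}$ and $e_3\to -6$, one finds $|e_2|$ and $|\Phi|$ comparable, $|e_2|=c|\Phi|+o(|\Phi|)$ with an explicit constant $c<6$. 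Hence there is $\Phi_0\in(0,\tfrac{1}{36})$ so that for $\Phi\in(-\Phi_0,0)$ this inflow obeys \eqref{H3}, lies in the GPS class, and therefore equals $\bar{\Bu}_-$.

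For $\Phi>0$ I would build a type $(1,2)$ solution along the branch from Lemma \ref{Lemmatype12-1}, which applies verbatim at $\alpha=\frac{\pi}{2}$ since there $e_1^*(\frac{\pi}{2})=0$: for every $e_1>0$ there is a unique $e_2(e_1)\in(-3-\tfrac12 e_1,0)$ with $I_{1,2}(e_1,e_2(e_1))=\frac{\pi}{2}$, and because $e_2(e_1)<0$ the corresponding flow is genuinely of type $(1,2)$. I would then analyze the branch as $e_1\to 0^+$. First, $e_2(e_1)\to 0^-$: arguing as in Lemma \ref{Lemmanonunique-2}, if $e_2(e_1^n)\to-\delta<0$ along a sequence $e_1^n\to 0$, then $I_{1,2}(0,-\delta)=2I_-(0,-\delta)>\pi>\frac{\pi}{2}$, a contradiction. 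Consequently the amplitude equals $e_1$ with $-e_2(e_1)=o(e_1)$, and a leading-order expansion of $J_{1,2}$ (treating $f-e_3\approx 6$) gives $\Phi(e_1):=2J_{1,2}(e_1,e_2(e_1))\sim \frac{\pi}{2}e_1\to 0^+$, so $\max_\theta|f|=e_1\sim\frac{2}{\pi}\Phi\ll 6\Phi$. Since $\frac{1}{36}$ is small, the intermediate value theorem furnishes, for each $\Phi\in(0,\frac{1}{36})$, an $e_1$ with $\Phi(e_1)=\Phi$ and $\max_\theta|f|\le 6\Phi$; the resulting symmetric type $(1,2)$ flow lies in the GPS class and thus equals $\bar{\Bu}_+$.

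The hard part is the degeneracy of the positive-flux case at $\alpha=\frac{\pi}{2}$. To leading order the profile is the \emph{non-negative} function $\frac{2}{\pi}\Phi\cos^2\theta$, so the inflow regions and the root $e_2$ collapse ($e_2=o(e_1)$) and the problem is tangent to the nonexistent pure outflow. Lemma \ref{Lemmatype12-1} already secures the strict sign $e_2(e_1)<0$, which sidesteps the question of existence; the genuine difficulty is therefore quantitative, namely pinning down the rates $e_2(e_1)\to 0$ and $\Phi(e_1)\sim\frac{\pi}{2}e_1$ sharply enough to verify \eqref{H3} throughout $(0,\frac{1}{36})$, which forces one to track the next-order correction generated by the $f$-dependence of the factor $(f-e_3)$ in $Q(f)$. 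The negative-flux estimate is the analogous but milder computation, since there the type $(0,1)$ solution exists and is unique outright.
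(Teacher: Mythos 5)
Your reduction via the uniqueness clause of Part (1) of Theorem \ref{TheromeGPS} is legitimate: any solution satisfying \eqref{H1} and \eqref{H3} must coincide with $\bar{\Bu}_{\pm}$, so exhibiting one solution of the claimed type in that class does identify the type. This is genuinely different from the paper's argument, which never constructs a candidate: the paper works directly with $\bar{\Bu}_{\pm}$, uses the bound \eqref{H3} (so that $\max|f|\le 6|\Phi|<\tfrac16$) together with explicit one-shot integral estimates (e.g. $I_{+}(\tfrac16,0)>\tfrac{3}{\sqrt{38}}\pi>\tfrac{\pi}{4}$, and the type-$(2,1)$ estimate $I_{2,1}>0.97\cdot\tfrac{\pi}{2}+0.32\cdot\tfrac{\pi}{2}>\tfrac{\pi}{2}$) to eliminate every type except $(0,1)$ and $(1,2)$, and then separates these two by the sign of the flux via Lemma \ref{Lemmahalfspace-2}. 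Your negative-flux half is sound as written: since the statement only asks for \emph{some} $\Phi_0$, the asymptotics $|e_2|=\tfrac{2}{\pi}|\Phi|+o(|\Phi|)<6|\Phi|$ for the unique pure inflow of Proposition \ref{Lemmapureinflow2}, Part (1), suffice, and the computations you sketch are of the same kind as those in the paper's Lemma \ref{Lemmahalfspace-2}.

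The gap is in the positive-flux half. The proposition asserts that $\bar{\Bu}_{+}$ is of type $(1,2)$ for \emph{every} $\Phi\in(0,\tfrac{1}{36})$, but your argument only yields this for $\Phi$ sufficiently small: the relations $-e_2(e_1)=o(e_1)$ and $\Phi(e_1)\sim\tfrac{\pi}{2}e_1$ are limits as $e_1\to 0^{+}$, so they verify \eqref{H3} and the range coverage needed for the intermediate value theorem only on some unspecified interval $(0,\Phi_0')$. The sentence ``Since $\tfrac{1}{36}$ is small, the intermediate value theorem furnishes, for each $\Phi\in(0,\tfrac{1}{36})$, an $e_1$ with $\Phi(e_1)=\Phi$ and $\max_\theta|f|\le 6\Phi$'' is not a proof: the intermediate value theorem produces an $e_1$ with the prescribed flux only after you know $\Phi(\cdot)$ attains values at least $\tfrac{1}{36}$ along the branch, and it gives no control whatsoever on $\max\{e_1,-e_2(e_1)\}$ at that $e_1$. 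For a non-asymptotic $\Phi$ you would need finite-$e_1$ bounds with tracked constants valid up to $\Phi=\tfrac{1}{36}$, which you explicitly defer (``the genuine difficulty is therefore quantitative'') but never supply; sharper asymptotic expansions, the fix you propose, cannot close this, because expansions at $e_1\to0^{+}$ do not by themselves cover a fixed range of fluxes. This is precisely the burden the paper's elimination argument avoids: there, \eqref{H3} is a hypothesis on the already-existing solution $\bar{\Bu}_{+}$, the exclusion estimates hold uniformly under $\max|f|<\tfrac16$, and type $(1,2)$ is forced for all $\Phi\in(0,\tfrac{1}{36})$ because every alternative, including $(0,1)$ whose flux is negative, is impossible.
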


\begin{proof}
According to Part (2) of Theorem \ref{2Dresult_2}, the Jeffery-Hamel solution cannot be of type $(1,0)$ because there exists no self-similar solution of type $(1,0)$ when $\alpha=\frac{\pi}{2}$. In addition, it cannot be of type $(1,1)$ when $\Phi\neq 0$ because a symmetric self-similar solution of type $(1,1)$ has zero flux (see \eqref{sym_22} for the definition of symmetry). 

Next, we rule out the possibility of types $(m, m)$, $(m, m+1)$, $(m+1, m)$, $(m,0)$, $m\geq 2$ for ${\bar{\Bu}}_{\pm}$. 
Suppose ${\bar{\Bu}}_{\pm}$ corresponds to the triple of roots $(e_1, e_2, e_3)$ and is of the type $(m, m)$, $(m, m+1)$, $(m+1, m)$, or $(m,0),$ $m\geq 2$. 
First of all, it follows  from \eqref{H3} that
\begin{equation}\label{eq:maxe1e2}
    \max\{e_1, -e_2\} =  \max_{\theta\in [-\frac{\pi}{2}, \frac{\pi}{2}]} |f(\theta)| \leq 6 |\Phi|<\frac{1}{6}.
\end{equation}
Then using \eqref{periodic-1}, we have 
\begin{equation*}
  I(e_1, e_2) = \frac{\sqrt{6}K( \bar{\gamma})}{\sqrt{e_1 -e_3}} \leq \frac{1}{m} \frac{\pi}{2} \leq \frac{\pi}{4}.
\end{equation*}
Assume that $e_1^*(\frac{\pi}{4})$ is the root satisfying 
\begin{equation*}
    I_{+}\left(e_1^*\left(\frac{\pi}{4}\right), 0\right) = \frac{\pi}{4}. 
\end{equation*}
We have $e_1 \geq e_1^*(\frac{\pi}{4})$. Suppose it was not correct, that is, $0< e_1 < e_1^*(\frac{\pi}{4})$. Since $e_2\leq 0$, by virtue of \eqref{nonunique-4} and  Lemma \ref{monotone}, 
\begin{equation*}
    I(e_1, e_2) \geq I(e_1, 0) =I_{+}(e_1, 0) > I_{+}\left(e_1^*\left(\frac{\pi}{4}\right), 0\right)  = \frac{\pi}{4}.
\end{equation*}
This leads to a contradiction, and hence $e_1 \geq e_1^*(\frac{\pi}{4})$. On the other hand, 
\begin{equation}\label{est2269}
    I_{+}\left(\frac16, 0\right) = \int_0^{\frac16} \frac{df}{\sqrt{-\frac23 (f-\frac16) f (f+ \frac{37}{6})}} > \int_0^1 \frac{3\, dg}{\sqrt{38 (1-g)g }} =\frac{3}{\sqrt{38}}\pi > \frac{\pi}{4}.  
\end{equation}
It follows from Lemma \ref{monotone} and \eqref{est2269} that $e_1 \geq e_1^*(\frac{\pi}{4}) > \frac16$. 
This contradicts with \eqref{eq:maxe1e2}, and then $\bar{\Bu}_{\pm}$ cannot
be of type $(m, m)$, $(m, m+1)$, $(m+1, m)$, $(m,0)$ with $m\geq 2$
if
$|\Phi|< \frac{1}{36}$.  

Next, we rule out the possibility of type $(2, 1)$. Suppose that the solution corresponds to the triple of roots $(e_1, e_2, e_3)$ and is of the type $(2, 1)$. 
Note that we have
   \begin{equation}
   \label{est2270}
  I(e_1, e_2) = \frac{\sqrt{6}K( \bar{\gamma})}{\sqrt{e_1 -e_3}} < \frac{\pi}{2}.  
\end{equation}
Meanwhile,  $K( \bar{\gamma})\geq \frac{\pi}{2}$ for $ \bar{\gamma} \geq 0$. Therefore, using \eqref{est2270}, one can obtain
\begin{equation*}
    6<e_1 - e_3 =  6+ 2e_1 + e_2 , 
\end{equation*}
which says 
\begin{equation}\label{new4}
   0<-e_2< 2 e_1. 
\end{equation}

Note that 
\begin{equation*}
\begin{array}{ll}
     I_{2, 1}(e_1, e_2) & = I(e_1, e_2) + I_{+}(e_1, e_2)\\
     & \displaystyle = \frac{\sqrt{6}K( \bar{\gamma})}{\sqrt{e_1 - e_3}} + \int_0^{e_1}\frac{df}{\sqrt{-\frac23 (f-e_1) (f-e_2) (f-e_3)}}\\
     & \displaystyle = \frac{\sqrt{6} K( \bar{\gamma})}{\sqrt{6+ 2e_1 + e_2}} + \int_0^1 \frac{dg}{\sqrt{\frac23 (1-g) ( g- \frac{e_2}{e_1}) ( e_1 g + 6 + e_1 + e_2)}}.
    \end{array}
\end{equation*}
Since $e_1\in (0,\frac{1}{6})$ and $e_2\in (-\frac{1}{6},0)$ due to \eqref{eq:maxe1e2},  one has
\begin{align*}
 \frac{\sqrt{6} K( \bar{\gamma})}{\sqrt{6+ 2e_1 + e_2}} > \frac{\sqrt{6}K(0)}{\sqrt{6+ \frac13}} > 0.97\cdot \frac{\pi}{2}.
\end{align*}
Meanwhile, using the property $0<-e_2 < 2e_1$ yields
\begin{align*}
    \int_0^1 \frac{dg}{\sqrt{\frac23 (1-g) ( g- \frac{e_2}{e_1}) ( e_1 g + 6 + e_1 + e_2)}} & \geq \int_0^1 \frac{dg}{\sqrt{\frac23 (1-g) (g + 2) \cdot \frac{19}{3} }} \\
    & = \frac{3}{\sqrt{38}} \cdot 2 \arctan\frac{\sqrt{2}}{2} >0.32 \cdot \frac{\pi}{2}. 
\end{align*}
This contradicts the fact that $I_{2, 1}(e_1, e_2) =\frac{\pi}{2}.$

There are only two possibilities left. One is of type $(0, 1)$ and the other is of type $(1, 2)$. Of course, the type $(0, 1)$ has a negative flux. Finally, we will prove that the type $(1, 2)$ flow has the positive flux when $(e_1, e_2)$ is close to $(0, 0)$. 
Assume that $(e_1, e_2, e_3)$ provides a flow of type $(1, 2)$, satisfying that 
\begin{equation*}
    I_{1, 2}(e_1, e_2) = \frac{\pi}{2}. 
\end{equation*}
 As proved in Lemma \ref{Lemmatype12-1}, for each $e_1>0$, $e_2$ is uniquely determined by $e_1$, and we denote it by $e_2(e_1)$. We will prove that 
 \begin{equation*}
     J_{1, 2}(e_1, e_2(e_1)) > 0,
 \end{equation*}
 when $e_1>0$ and is close to $0$. In fact, we have the following lemma.

\begin{lemma}\label{Lemmahalfspace-2}
    It holds that 
    \begin{equation*}
     \lim_{e_1\to 0^+} \frac{e_2(e_1) }{e_1} = 0, \ \ \ \ \mbox{and}\ \ \ \     \lim_{e_1\to 0^+} \frac{J_{1,2}(e_1, e_2(e_1))}{e_1} =\frac{\pi}{2}>0.  
    \end{equation*}
\end{lemma}

We first use Lemma \ref{Lemmahalfspace-2} to finish the proof of Proposition  and then prove Lemma \ref{Lemmahalfspace-2}.

Lemma \ref{Lemmahalfspace-2} shows that the flow of type $(1, 2)$ has positive flux when $ \max\{|e_1|, |e_2|\} $ is small. Now we can conclude that the unique Jeffery-Hamel symmetric flow obtained in Theorem \ref{TheromeGPS} is of type $(0, 1)$ when the flux $\Phi<0$ and $|\Phi|$ is small, while the flow is of type $(1, 2)$ when $0< \Phi < \frac{1}{36}$. Hence the proof for Proposition \ref{Prophalfplane} is completed. 
\end{proof}

\begin{proof}[Proof of Lemma \ref{Lemmahalfspace-2}]
First, we prove that
    \begin{equation}\label{e1e2-new}
        \lim_{e_1\to 0^+} \frac{e_2(e_1) }{e_1} = 0. 
    \end{equation}
Following the same proof for \eqref{new4}, one has 
    \begin{equation}\label{e1e2}
        0< -e_2(e_1) < 2e_1.
    \end{equation}

Since 
\begin{equation*}
    I_{1,2}(e_1, e_2(e_1)) = I(e_1, e_2(e_1)) + I_{-}(e_1, e_2(e_1))
\end{equation*}
and 
\begin{equation*}
    \lim_{e_1\to 0^+, e_2 \to 0^-}I(e_1, e_2) = I(0, 0) = K(0)=\frac{\pi}{2}, 
\end{equation*}
it holds that
\begin{equation}\label{H5}
    \lim_{e_1 \to 0^+}I_{-}(e_1, e_2(e_1)) = 0. 
\end{equation} 
Meanwhile, 
\begin{align}  
\label{eq2356}
    I_{-}(e_1, e_2) & = \int_{e_2}^0 \frac{df}{\sqrt{-\frac23 (f-e_1)(f-e_2) (f-e_3)}} \nonumber \\
& = \frac{\sqrt{-e_2}}{\sqrt{e_1}} \int_0^1 \frac{  \, dg }{   \sqrt{\frac23 \left(1 - \frac{e_2}{e_1 } g \right) (1-g) (e_2g + 6 + e_1 +e_2)}}.
\end{align}
It follows from \eqref{e1e2} that for $\delta>0$ small enough and $e_1\in (0,\delta)$, $e_2\in(-2\delta,0)$. Hence there exists a $C(\delta)>0$ such that
\begin{align*}
 \frac{1}{C(\delta)} \leq   \int_0^1 \frac{  \, dg }{   \sqrt{\frac23 \left(1 - \frac{e_2}{e_1 } g \right) (1-g) (e_2g + 6 + e_1 +e_2)}}\leq C(\delta).
\end{align*}
This,  together with \eqref{e1e2}, \eqref{H5}, and \eqref{eq2356}, implies that 
\begin{equation*}
    \lim_{e_1 \to 0^+ } \frac{\sqrt{-e_2(e_1)}}{\sqrt{e_1}} = 0
\end{equation*}
and proves \eqref{e1e2-new}. 

Consider that 
\begin{equation}\label{H7}
    J_{1, 2}(e_1, e_2) =  \underline{J_{+}(e_1, e_2) - J_{+}(e_1, 0) } + \underline{J_{+}(e_1, 0)- J_{+}(0, 0) }+ \underline{ 2J_{-}(e_1, e_2)}. 
\end{equation}
Herein, as shown in the proof of Lemma \ref{Lemmanonunique-4}, 
\begin{align*}
    \frac{\partial}{\partial e_2}J_{+}(e_1, e_2) & = \int_0^{e_1} \frac{(3+ e_2 + \frac12 e_1) f\, df }{\sqrt{\frac23 (e_1 -f)} [(f-e_2) (f+ 6+ e_1 + e_2)]^{\frac32}}  \\
    & = \int_0^1 \frac{ (3+ e_2 + \frac12 e_1) g\, dg }{ \sqrt{\frac23(1-g)} \left[ ( g -\frac{e_2}{e_1}) (e_2 g + 6 + e_1 +e_2) \right]^{\frac32}} . 
\end{align*}
Hence it holds that
\begin{equation*}
\begin{aligned}
     \lim_{e_1\to 0^+, \, \frac{e_2}{e_1}\to 0} \frac{\partial}{\partial e_2} J_{+}(e_1, e_2) 
     = \int_0^1 \frac{3 \, dg}{\sqrt{\frac23(1-g) g} \cdot 6^{\frac32}} = \int_0^1 \frac{ dg}{4\sqrt{(1-g)g}}=\frac{\pi}{4}>0.
\end{aligned}
 \end{equation*}
Similarly, for the second part of \eqref{H7}, same as \eqref{eq:deriofJ+}, we have
\begin{align*}
    \frac{d}{de_1}J_{+}(e_1, 0) = \int_0^1 \frac{\frac12 e_1 g^2 +\frac12 e_1 g + 6g}{\sqrt{\frac23 (1-g) g (e_1 g + 6 +e_1)^3}} \, dg.
\end{align*}
This yields
\begin{equation*}
    \lim_{e_1 \to 0^+ }\frac{J_{+}(e_1, 0) - J_{+}(0, 0)}{e_1} = \int_0^1 \frac{ \sqrt{g}\, dg}{\sqrt{\frac23 (1-g) \cdot 6}} =\frac{\pi}{4}>0. 
\end{equation*}
For the last part of \eqref{H7}, when $(e_1, e_2)$ is close to $(0, 0)$, one has
\begin{equation*}
\begin{aligned}
      \left| J_{-}(e_1, e_2) \right| &= \left| \int_{e_2}^0 \frac{f\, df}{\sqrt{\frac23 (e_1-f) (f-e_2)(f+6+e_1 +e_2)}}  \right|
    \leq C   \left| \int_{e_2}^0 \frac{f \, df }{\sqrt{(e_1-f)(f-e_2)}}   \right|\\
  & \leq C \frac{\sqrt{-e_2}}{\sqrt{e_1}}\left| \int_{e_2}^0 \frac{\sqrt{-f} \, df }{\sqrt{f-e_2}}   \right| \leq C \cdot (-e_2). 
\end{aligned}
\end{equation*}
Combining the above estimates, we finish the proof for Lemma \ref{Lemmahalfspace-2}. 
\end{proof}

Theorem \ref{main3} follows from Theorem \ref{TheromeGPS} and Proposition \ref{Prophalfplane}. 

\medskip

{\bf Acknowledgement.}
The research of Gui is supported by University of Macau research grants  CPG2024-00016-FST, CPG2025-00032-FST, SRG2023-00011-FST, MYRG-GRG2023-00139-FST-UMDF, UMDF Professorial Fellowship of Mathematics, Macao SAR FDCT 0003/2023/RIA1 and Macao SAR FDCT 0024/2023/RIB1. The research of Wang is partially supported by NSFC grants 12171349 and 12271389, the Natural Science Foundation of Jiangsu Province Grants No. BK20240147. The research of Xie is partially supported by NSFC grants 12250710674 and  12161141004, the Fundamental Research Funds for the Central Universities, and Program of Shanghai Academic Research Leader 22XD1421400. The authors thank Professor Vladim\'ir \v{S}ver\'ak for many helpful discussions.

\medskip
	{\bf Statement and declaration.} The authors have no relevant financial or non-financial interests to disclose. No datasets were generated or analyzed during the current study.

\bibliographystyle{abbrv}

\end{document}